\author{ Lysetskyi  T.B., Yeleiko Ya.I.}
\title{Total progeny in almost critical multi-type Galton-Watson processes}
\institute{Ivan Franko National University of Lviv, Lviv, Ukraine (Lysetskyi  T.B.)\\
Ivan Franko National University of Lviv, Lviv, Ukraine (Yeleiko Ya.I.)}
\email{taraslysetskiyy@gmail.com (Lysetskyi  T.B.), yikts@yahoo.com (Yeleiko Ya.I.)}
\subjclass{60J80}
\begin{document}

\maketitle


\section{Introduction}
To begin with, we  recall the necessary
definitions of some general notions.

Let $X$ be the set of all d-tuples $i = (i_1, ... , i_d )$  with non-negative integer elements.
\begin{definition}
The multi-type Galton-Watson process is a homogeneous vector Markov process $Z_0,Z_1,Z_2,...,$ whose states are vectors in $X$. We shall always assume that $Z_0=1$. We will use the notation $Z_n=\left(Z_{n,1},\ldots,Z_{n,d}\right)', n\in \mathbb{Z}_{+}$. We interpret $Z_{n,i}$, the $i$-th component in $Z_n$ as the number of type $i$ particles in the $n$-th generation. Further, we will refer to the Galton-Watson process as  the G.W. process.
\end{definition}
\begin{definition}
The random vector $Y_n=\left(  Y_{n,1},\ldots,Y_{n,d}\right)', n\in \mathbb{Z}_{+}$, were $Y_{n,k}=\sum_{l=0}^{n}Z_{l,k}, k\in\{1,\ldots,d\}$ will be called the total progeny of a G.W. process. This vector process denotes number of all particles that lived up to the $n$-th generation.
\end{definition}
Denote

$C$: the d -dimensional unit cube, i.e., $C=\{s=(s_1,\ldots, s_d )' \in \mathbb{R}^d: 0\le s_k \le1, k  \in \{1,2,...,d\}$.

$s^{\alpha}=\prod_{i=1}^{d}s_i^{\alpha_i}, \alpha \in X, s \in C.$

$\textbf{1}$: d-dimensional vector of ones, $\textbf{1}= (1,\ldots,1)'.$

$e_{k}$: d-dimensional vector, whose $k$-th entry is one, while others equal zero.

 Let $f_{1,m}(s)$ be a probability generating function (p.g.f.) of $Z_1$,  if the process started from one $m$-th type particle ($Z_0=e_m$) i.e.
\begin{gather*}
f_{1,m}(s)=\sum_{i \in X}p_{m}(i)s^i, s \in C,\\
p_{m}(i) = P\left(Z_n=i\middle|Z_0=e_m\right), i \in X.
\end{gather*}
By $f_{n,m}(s)$, we denote the p.g.f. of $Z_n$ when $Z_0=e_m$.
We will often use vector notations for those p.g.f.'s:
\begin{gather*}
f(s)\equiv f_1(s)\equiv (f_{1,1}(s),\ldots,f_{1,d}(s))',\\
f_{n}(s)=(f_{n,1}(s),...,f_{n,d}(s))', s \in C.
\end{gather*}
 It is well known (see \cite{k3}, p.36), that $f_{n,m}(s)=f_{1,m}(f_{{n-1}}(s))$ or, in the vector form, $f_n(s)=f(f_{n-1}(s)).$ Also, we define the matrix of the first moments of $Z_1$:
\[M=M(f)=\begin{Vmatrix}M_{kl}\end{Vmatrix}_{k,l\in\{1,...,d\}}:M_{kl}=\frac{\partial f_{1,k}(\textbf{1})}{\partial s_l}.\]
By $\rho=\rho(f)$,  we denote the largest positive eigenvalue of $M$ (if it exists). We remind the reader that the G.W. process is called subcritical if $\rho<1$, critical if $\rho=1$, and supercritical if $\rho>1$.

We define $N=\min(n:Z_n=\textbf{0})$, representing the moment of extinction of the process. Note that $\{ N>n\} = \{Z_n>0\}$. 
\begin{definition}
The G.W. process with immigration is defined by a sequense of random vectors $Z^0_{n}=\left(Z^0_{n,1},\ldots,Z^0_{n,d}\right)', n\in \mathbb{Z}_{+}$ with values in $X$, which are determined by the relation
\[Z^0_{n+1}=W^n_1+W^n_2+...+W^n_{Z^0_{n}}+H_{n+1}, n \in \mathbb{N}, \text{ }Z^0_0=H_0,\] 
where $W^n_1,W^n_2,...$ are independent and identically distributed with p.g.f. $f(s)$, the $H_{0},H_{1},...$ are also i.i.d. with p.g.f. $B=B(s)=\sum_{i \in X}p_{0}(i)s^i, s \in C$, and the $W's$ and $H's$ are independent. 
\end{definition}
\begin{definition}
The random vector $Y^0_n=\left(  Y^0_{n,1},\ldots,Y^0_{n,d}\right)', n\in \mathbb{Z}_{+}$, were $Y^0_{n,k}=\sum_{j=0}^{n}Z^0_{j,k}, k\in\{1,2,\ldots,d\}$ will be called the total progeny of a G.W. process with immigration.
\end{definition}

The long-term behavior of the multi-type G.W. processes has been extensively studied for all cases of criticality by numerous authors over the years. For foundational results in G.W. theory, we direct readers to classical treatises \cite{k0} and \cite{k3}. In this paper, our focus lies on the long-term evolution of the total progeny in the near critical multi-type G.W. processes, meaning that $\rho$ is close to $1$. A detailed study of the total progeny in the one-dimensional processes without immigration was provided by Pakes in \cite{k8}. For an overview of basic results on that topic, also refer to Pakes \cite{k8}.
The total progeny in critical processes with immigration in the one-dimensional case was also addressed by Pakes in \cite{k9}. In those works, for the critical processes, Pakes found explicit limits $\lim_{n \to \infty} P\left(Y_n/m_n |Z_n>0\right)$ and $\lim_{n \to \infty} P\left(Y^0_n/m^0_n\right)$, where $m_n$ and $m^0_n$ are normalizing sequences.

The near critical G.W. processes, often termed as the transient phenomena in G.W. processes, were investigated, for example, by Fahady \cite{k2a} in the one-dimensional case and by Quine \cite{k10} in the multi-dimensional case. It is worth noting, that in those cases, limits $\lim_{n \to \infty}P(Z_n/a_n$ $ |Z_n>0)$  and $\lim_{n \to \infty} P\left(Z^0_n/a^0_n\right)$ are the same as in the case $\rho=1$  ($a_n$ and $a^0_n$ are normalizing sequences). Transient phenomena for the total progeny in the one-dimensional case without immigration were studied by Nagaev and Karpenko in \cite{k4}, and by Karpenko in \cite{k5}. More precisely, in \cite{k4}, the limit $\lim_{n \to \infty} P\left(Y_n/m_n |N=n\right)$ was found, while in \cite{k5}, the limit $\lim_{n \to \infty}P \left(Y_n/m_n |Z_n>0\right)$ was obtained. Unlike the case with the limits of $Z_n$ and $Z_n^0$, the limits of 
$Y_n$ and $Y^0_n$ depend on the behavior of $\lim_{n \to \infty, \rho \to 1}n\lvert 1 -\rho \rvert$, and on the direction from which $\rho$ approaches unity.

The asymptotics of G.W. processes and the total progeny in the one-dimensional near-critical processes with immigration were also studied in the Skorokhod $D$ space by Khusanbaev in \cite{k7} and \cite{k7a} (see also the references there). Additionally, large and moderate deviations for the total progeny in the near critical one-dimensional G.W. processes were investigated in \cite{d1}, \cite{d3}, and \cite{d2}.

In in this paper, we extend the results of \cite{k4} and \cite{k5} to the multi-dimensional case (see Theorems \ref{t3} and \ref{t4}). We employ the methods used by Nagaev and Karpenko, establishing multi-dimensional analogies of the lemmas given in \cite{k4} and \cite{k5}, as well as utilizing the methods employed by Quine. Additionally, we derive similar results for the total progeny $Y^0_n$ in the processes with immigration in Theorem \ref{t5}. Finally, in Theorem \ref{t6}, we find the limit $\lim_{n \to \infty}\lim_{m \to \infty} P (Y_n/m_n |Z_{n+m}>0, N < \infty)$, which corresponds to a random variable that serves as the limit of some branching process with immigration, as previously explored by Pakes \cite[p.~186]{k8} for a constant $\rho$ in the one-dimensional case.

\section{Further notations and definitions}

We will use the following notations

$\textbf{0}$: d-dimensional vector of zeros $(0,\ldots,0)'.$

$sw=(s_1w_1,\ldots,s_dw_d)',$ $s/w=(s_1/w_1,\ldots,s_d/w_d)', s,w \in \mathbb{R}^d.$

Throughout the paper, for any two given $d$-dimensional vectors $v=(v_1,\ldots,v_d)'$ and $w=(w_1,\ldots,w_d)'$ notion $v\le (<) w$ means that $v_k\le (<) w_k, k \in \{1,2,...,d\}.$

For the dot product of the vectors $v$ and $w$ we will use the notation $v{'} w$. The notation $ vw'$ denotes the tensor product of the vectors $v$ and $w$.

\begin{remark} \label{r0}
Notation $v{'}w u$ means that we multiply the vector $u$ by the dot product $v{'}w$. If $M$ is a $d \times d$ matrix, then the notation $v'M$  denotes the regular matrix multiplication of the row vector $v'$ with the matrix $M$ and the notation $vM$ indicates that we multiply each element of the first of the matrix $M$ by $v_1$, every element of the second row of  $M$ by $v_2$ and so on.
\end{remark}
Define:
\[M(s)=M(s,f)=\begin{Vmatrix}M_{kl}(s)\end{Vmatrix}_{k,l\in\{1,...,d\}}:M_{kl}(s)=\frac{\partial f_{1,k}(s)}{\partial s_l},\]
\[b^k_{lm}=b^k_{lm}(f)=\frac{\partial f^2_{1,k}(\textbf{1})}{\partial s_l \partial s_m},
c^k_{lmj}=c^k_{lmj}(f)=\frac{\partial f^3_{1,k}(\textbf{1})}{\partial s_l \partial s_m \partial s_j},\]
\[b^k_{lm}(s)=b^k_{lm}(f,s)=\frac{\partial f^2_{1,k}(s)}{\partial s_l \partial s_m}.\]
where $k,l,m,j \in \{1,2,...,d\}, s \in C.$

For a given natural number $U$ and positive constants $a,b,c$,  we define the class 
$\mathcal{K}=\mathcal{K}(a,b,c,U)$ of p.g.f.'s $f(s) \in \mathcal{K}$, for which the following conditions are satisfied:

\begin{equation}\label{eq0}
\begin{array}{l}
A) [M^U(f)]_{kl} \ge a, k,l\in \{1,2,...,d\};\\ 
B) \sum_{k,l,m=1}^{d}b^k_{lm}(f) \ge b;\\ 
C) \sum_{k,l,m,j=1}^{d}c^k_{lmj}(f) \le c.\\ 
\end{array}
\end{equation}

Conditions $A$) and $B$) guarantee that the process is positively regular and not singular (see \cite[pp.~38-39]{k3}), which also implies that the process is irreducible, and the matrix $M$ has a positive eigenvalue $\rho=\rho(f)$ that is greater than any other eigenvalue of $M$ in terms of absolute value (by Perron-Frobenius theorem). Such an eigenvalue is also called the Perron root.  By $\mathcal{K}_{\rho}$ we denote subset of functions $f$ from $\mathcal{K}$ for which the Perron root of $M(f)$ equals to $\rho$.

Let $v=v(f)$ and $u=u(f)$ be the left and the right eigenvectors of $M=M(f)$, respectively, for which

\begin{equation}\label{eq2}
u'v=\sum_{k=1}^{d}u_kv_k=u'\textbf{1}=\sum_{k=1}^{d}u_k=1.
\end{equation}

For matrices $M(s)$, let $\rho_s= \rho_s(f)$ be the Perron root (if it exists); $u_s=u_s(f), v_s=v_s(f)$ be the right and the left eigenvectors corresponding to $\rho_s$, for which the normalization conditions \eqref{eq2} are also satisfied. By $\hat{\rho}_s$, we denote the second largest in magnitude eigenvector of $M(s)$. If $s$ is sufficiently close to $\mathbf{1}$, the Perron root exists, as guaranteed by the Perron-Frobenius theorem. 

We denote the vector of extinction probabilities $\mu =\mu(f)= (\mu_1,\ldots,\mu_d)'$, where  $\mu_j=P(\cup_{n\ge1} Z_n=\textbf{0}|Z_0=e_j)$. The vector $\mu$ is also the smallest of the roots of the equation $f(s)=s$, which exists for $f \in \mathcal{K}$ according to \cite[Theorem 7.1.]{k3} and it also is a limit of an increasing sequence $f_n(\textbf{0})$. It equals $\mathbf{1}$ if $\rho\le1$ and is less than $\mathbf{1}$ if $\rho>1$. 

Specifically for $\mu$, $M(\mu)=M_{\mu}$, $\rho_{\mu}=\rho_{\mu}(f)$  is the Perron root of this matrix,  $v_{\mu}$ and $u_{\mu}$ are the respective left and right eigenvectors. If $\rho \leq 1$, it is evident that $M_{\mu}=M$ and $\rho_{\mu}=\rho$.

 By $\lambda = \lambda(B)= (\lambda_1,\ldots,\lambda_d)'$, we denote the vector of immigration means

\[ \lambda_k=\frac{\partial B(\textbf{1})}{\partial s_k}, k\in \{1,2,...,d\}.\]

We will consider the class of immigration p.g.f.'s $J=J(d_1,d_2)$ which satisfy the following conditions
\begin{equation}\label{eq3}
\begin{array}{l}
A0) B(\textbf{1})=1;\\ 
B0)  \sum_{k=1}^{d}\lambda_k \ge d_1;\\ 
C0)  \sum_{k,l=1}^{d}\frac{\partial B^2(\textbf{1})}{\partial s_k \partial s_l} \le d_2.
\end{array}
\end{equation}

Also, for each $f \in \mathcal{K}$ and $s \in C,$ with $k\in \{1,2,...,d\}$, we define the quadratic form and the weighted sums as follows
\[q_k[s_0,s] = \frac{1}{2}\sum_{l,m=1}^{d}b^k_{lm}(f,s_0)s_ls_m,q_k[\textbf{1},s]=q_k[s]\]
\[Q[s_0,s]=\sum_{k=1}^{d}v_kq_k[s_0,s], Q[\textbf{1},s]=Q[s],Q(s_0)=Q[s_0,u(f)], Q=Q(\textbf{1}).\]

Let 
\begin{equation}\label{eq4}
\pi_0=0, \pi_n=\sum_{j=1}^{n}\rho^{j-2},n\in \mathbb{Z}_{+};
\end{equation}

\begin{equation}\label{eq5}
\psi_n(s)=\frac{\rho^{n}v's}{1+\pi_nQv's}.
\end{equation}

Introduce the following generating functions
\[t_n(s)=(t_{n,1}(s),\ldots,t_{n,d}(s))',\text{ } t_{n,k}(s)=E\left(s^{Y_n}\middle|Z_0=e_k\right),k\in \{1,2,...,d\},\]
\[h_n(s)=(h_{n,1}(s),\ldots,h_{n,d}(s))', \text{ }h_{n,k}(s)=E\left(s^{Y_n}\middle|Z_0=e_k,Z_n=\textbf{0}\right),k\in \{1,2,...,d\}.\]

It is known (see for example \cite[p.~7]{k11}) that both $t_n(s)$ and $h_n(s)$ satisfy functional equations
\begin{equation}\label{eq8}
t_0(s)=s, t_n(s)=sf(t_{n-1}(s)), h_0(s)=\textbf{0}, h_n(s)=sf(h_{n-1}(s)).
\end{equation}

Additionally, according to Pakes \cite{k8}, there exists a unique solution $h^*=h^*(s)$ to the equation in the one-dimensional case
\begin{equation}\label{eq9}
h(s)=sf(h(s)),
\end{equation}
for a fixed $s \in C$. This solution is the limit of a strictly increasing sequence $h_n(s)$ and a strictly decreasing sequence $t_n(s)$. The same result can be easily extended to the multi-dimensional case (Wang proves similar equality in \cite{k11}).

Define
\[g_n(s)=(g^1_n(s),\ldots,g^d_n(s))', \text{where}\]
\[g^k_n(s)=E\left(s^{Y_n}\middle|Z_0=e_k,N=n\right),k\in \{1,2,...,d\}.\]
Functions $g_n(s)$ satisfy the equations 
\begin{equation}\label{eq8a}
g_0(s)=\textbf{0}, g_n(s)=h_{n+1}(s)-h_n(s).
\end{equation}
As for the process with immigration, the p.g.f. of the total progeny, denoted as $\phi_n(s)$ satisfies
\begin{equation}\label{eq10}
\phi_n(s)=\prod_{k=0}^{n-1}B(t_k(s)).
\end{equation}

Let $\sigma=(\frac{1}{n},1-\rho,\textbf{1}-s), \sigma_1=(\frac{1}{n},1-\rho), \sigma_2=(1-\rho,\textbf{1}-s)$. By $o_n(s), o_n, o(s)$ and $o$ we denote scalar or vector quantities that depend on $f \in \mathcal{K}$, such that $\sup_{f \in \mathcal{K}} o_n(s) \to 0$ as $\sigma \to 0$; $\sup_{f \in \mathcal{K}} o_n \to 0$ as $\sigma_1 \to 0$; $\sup_{f \in \mathcal{K}} o(s) \to 0$ as $\sigma_2 \to 0$; and $\sup_{f \in \mathcal{K}} o \to 0$ as $\rho \to 1$.

When studying the near critical processes, we will distinguish two cases: 1) $\lim_{\sigma_1 \to 0}$ $\sup n\lvert1-\rho\rvert \le \hat{c}$; 2) $\lim_{\sigma_1 \to 0} n\lvert 1-\rho\rvert  = +\infty.$ For convenience, instead of mentioning the first (second) case, we will use $i = 1(2)$. Define $\theta_n=nln\rho_{\mu}.$ 

\begin{remark} \label{r1}
For simplicity, we will assume that:

$1)$ all coordinats of $\textbf{1}-s$ converge to $0$ at the same rate as $\sigma_2$ (or $\sigma$) goes to $0$, meaning that there exist positive constants $c_1$ and $c_2$ such that
$c_1 \le \frac{1-s_k}{1-s_m}\le c_2, \sigma_2 (\sigma) \to 0, k,m \in \{1,2,...,d\};$

$2)$ components $1-s_k$ of the vector $\textbf{1}-s$ are at least of the order $O(\min\{\frac{1-\rho}{n}, \frac{1}{n^2}\}).$

Our choice of normalizing sequences in Theorems $(3)-(6)$ will guarantee that those assumptions are satisfied.
\end{remark}

\section{Preliminary results}

We provide Lemma 7 from Quine \cite{k10} in a slightly changed version, which will be crucial for proving Theorems \ref{t3}-\ref{t6}.
Let $P(k,s)$ be a sequence of non-negative primitive $d \times d$ matrices with unit spectral radii and assosiated right and left eigenvectors $v(k,s)$ and $u(k,s)$, respectively, normalized so that $v'(k,s)u(k,s)=u(k,s)\textbf{1}=1$. Set $R(k,s)=u(k,s)v'(k,s)$. 

Let $A(n,m,s)$ be a triple sequence of $d \times d$ matrices satisfying the condition

\[A(n,m,s) \le P(n,s), n,m \in \mathbb{N}, s \in C.\]
Define 
\[B(n,s)=\prod_{k=1}^{n}(P(n,s)-A(n,m,s)).\]
\begin{lemma}[Quine \cite{k10}]\label{l1}
Let the following conditions be satisfied:

i) there exists a null-sequence $\delta_m$ such that 
\[(1-\delta_m)R(n,s)\le P^m(n,s)\le (1+\delta_m)R(n,s);\]

ii) there exists a triple sequence $p_{n,m}(s)$ that is non-increasing for a fixed pair $(n,s)$ with respect to $m$. Additionally, for a fixed $m$, the sequence $p_{n,n-m}(s)$ is null as $n \to \infty$ and $s \to \textbf{1}$, satisfying
\[A(n,m,s) \le p_{n,m}(s)R(n,s), \quad n, m \in \mathbb{N},\]
then for any $\epsilon>0$ and any vector $x \ge \textbf{0}$ for which $B(n)x \ne 0$, there exists natural $N_{\epsilon}$ and $s_{\epsilon} \in C$ such that for $n \ge N_{\epsilon}$ and any $s_{\epsilon}\le s \le \textbf{1}$,
\[-\epsilon<\left[\frac{B(n,s)x}{v'(n,s)B(n,s)x}-u(n,s)\right]_k<\epsilon, k\in \{1,2,...,d\}.\]
\end{lemma}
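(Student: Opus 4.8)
The plan is to mimic the classical argument for convergence of products of positively regular matrices to a rank-one projector, but carried out uniformly in $s$ near $\textbf{1}$. Write $B(n,s) = \prod_{k=1}^{n}\bigl(P(n,s) - A(n,k,s)\bigr)$ and, for $0 \le j \le n$, split this product as $B(n,s) = L_j(n,s)\, R_j(n,s)$ where $R_j(n,s) = \prod_{k=n-j+1}^{n}(P(n,s)-A(n,k,s))$ collects the last $j$ factors and $L_j(n,s)$ the first $n-j$. Because every factor $P(n,s)-A(n,k,s)$ is nonnegative and dominated by $P(n,s)$, the matrices are order-preserving, so the whole analysis can be run through the partial order $\le$ on nonnegative matrices.

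First I would show that a block of consecutively multiplied factors of fixed length $m$ is squeezed between multiples of the rank-one matrix $R(n,s) = u(n,s)v'(n,s)$. Using $0 \le A(n,k,s) \le p_{n,k}(s) R(n,s)$ from hypothesis (ii) together with $R(n,s)P(n,s) = P(n,s)R(n,s) = R(n,s)$ (which follows from the eigenvector normalization and primitivity), one gets for any window of $m$ indices ending at position $n-\ell$ an inequality of the form
\[
P^m(n,s) - \Bigl(\sum_{k} p_{n,k}(s)\Bigr) P^{m-1}(n,s) \;\le\; \prod (P(n,s)-A(n,k,s)) \;\le\; P^m(n,s).
\]
Then hypothesis (i) converts $P^m(n,s)$ and $P^{m-1}(n,s)$ into $(1\pm\delta_m)R(n,s)$, and the monotone, null behaviour of $p_{n,n-m}(s)$ as $n\to\infty,\ s\to\textbf 1$ (hypothesis (ii)) lets me choose first $m$ large (to make $\delta_m$ small) and then $N_\epsilon, s_\epsilon$ so that $\sum_k p_{n,k}(s)$ over the relevant block is small. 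The upshot is that one intermediate block of length $m$ already produces a matrix of the form $(c + o(1))R(n,s)$ with $c$ bounded away from $0$, i.e. essentially rank one up to a small perturbation.

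Next I would propagate this through the remaining factors. Sandwiching the block estimate between $L_j$ on the left and $R_{j'}$ on the right and again using $0 \le P(n,s)-A(n,k,s) \le P(n,s)$ and $P(n,s)R(n,s)=R(n,s)$, the left factors act trivially on $R(n,s)$ and the right factors only rescale, so for suitable $j$ one obtains
\[
\bigl(1-\epsilon'\bigr)\, w(n,s)\, u(n,s) v'(n,s) \;\le\; B(n,s) \;\le\; \bigl(1+\epsilon'\bigr)\, w(n,s)\, u(n,s) v'(n,s)
\]
for some positive scalar-vector pair, valid for $n\ge N_\epsilon$ and $s_\epsilon \le s \le \textbf{1}$. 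Applying this to a fixed $x\ge\textbf 0$ with $B(n,s)x\ne 0$, the scalar factor cancels in the ratio $B(n,s)x / \bigl(v'(n,s)B(n,s)x\bigr)$, leaving it within $\epsilon$ of $u(n,s)$ coordinatewise, which is the claim.

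The main obstacle is the uniformity in $s$: the sequences $\delta_m$ and $p_{n,m}(s)$ must be handled so that the choice of the window length $m$, of $N_\epsilon$, and of $s_\epsilon$ can be made in the right order without circularity — one fixes $m$ from $\delta_m$ first, then exploits that $p_{n,n-m}(s)\to 0$ as $n\to\infty$ and $s\to\textbf 1$ for that fixed $m$, then controls the finitely many remaining factors. A secondary technical point is verifying that $v'(n,s)B(n,s)x$ stays bounded away from $0$ (so the normalization is legitimate), which follows from the lower sandwich bound together with $v'(n,s)x$ being controlled; this is where the non-degeneracy assumption $B(n,s)x\neq\textbf 0$ and nonnegativity of $x$ enter. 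Since the statement is quoted from Quine, I would also remark that the only change from the original is the explicit dependence on $s$ and the relaxed null-condition on $p_{n,n-m}(s)$, and the argument above is exactly Quine's with that parameter carried along.
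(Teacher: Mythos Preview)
Your sketch is correct and matches the paper's treatment exactly: the paper does not give its own proof of this lemma but states (Remark~\ref{r2}) that it is Quine's Lemma~7 with the extra parameter $s$ carried along, and that ``the proof remains nearly identical, we will not repeat it here.'' Your outline --- fixing a block length $m$ to make $\delta_m$ small, then choosing $N_\epsilon,s_\epsilon$ so that the $p_{n,n-m}(s)$ terms are small for that fixed $m$, and sandwiching the product between multiples of $R(n,s)$ --- is precisely Quine's argument with $s$ as a passenger; indeed the paper later reproduces exactly these estimates (see \eqref{eq19d1}--\eqref{eq19d3} in the proof of Lemma~\ref{l8}) when it needs a variant of the same bound.
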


\begin{remark}\label{r2}
Quine employs only single and double sequences $P(n)$ and $A(n,m)$ for his purposes. Although the proof remains nearly identical, we will not repeat it here.
\end{remark}

Let $A$ be an $d\times d$ matrix. We define $x_A=\max_{\lvert \rho_i \rvert <1}\lvert \rho_i \rvert$, where $\rho_1, ..., \rho_d$ are the eigenvalues of $A$.
\begin{theorem}[Buchanan \cite{k1}]\label{t1}
 Let $F$ be an infinite family of $d\times d$ matrices. The sequences $A^n, A \in F$ converge uniformly if 

i) the eigenvalues of A
are less than 1 in magnitude, except possibly for some eigenvalues equal to 1, 
each of which corresponds to a linear elementary divisor;

ii) there exists $x=x(F)$ such that $x_A\le x < 1, i \in \{1,2,...,d\}$;

iii) there exists $N=N(F)$ such that $x_A^NA$ are uniformly bounded.
\end{theorem}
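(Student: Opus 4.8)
\emph{Proof idea.} The plan is to split, for each $A\in F$, the action of $A$ on the eigenspace of the eigenvalue $1$ from its action on a complementary invariant subspace, and then to reduce the statement to one uniform geometric-decay estimate for the resulting ``transient'' part. For $A\in F$ let $R_A$ be the Riesz spectral projection associated with the eigenvalue $1$ (with $R_A=\mathbf 0$ when $1$ is not an eigenvalue of $A$); it commutes with $A$ and yields an $A$-invariant splitting $\mathbb R^d=\mathrm{Im}\,R_A\oplus\ker R_A$. Hypothesis (i) — that every elementary divisor belonging to the eigenvalue $1$ is linear — says precisely that $1$ is a semisimple eigenvalue, so $A$ restricts to the identity on $\mathrm{Im}\,R_A$; hence $AR_A=R_AA=R_A$, and therefore $A^nR_A=R_A$ for all $n$. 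Putting $B_A:=A-R_A$ we get $R_AB_A=B_AR_A=\mathbf 0$ and, by an immediate induction, $A^n=R_A+B_A^n$ for every $n\ge 1$. Since $B_A$ vanishes on $\mathrm{Im}\,R_A$ and coincides with $A$ on $\ker R_A$, its eigenvalues are $0$ and the eigenvalues of $A$ other than $1$, so by (ii) its spectral radius is $\rho(B_A)=x_A\le x<1$. Thus the theorem is equivalent to $\sup_{A\in F}\|B_A^n\|\to 0$ as $n\to\infty$.

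Next I would extract uniform decay from (ii) and (iii). I read (iii) as the uniform bound $\|B_A^N\|\le K\,x_A^N\le K\,x^N$ for the fixed exponent $N=N(F)$. The key technical fact, which is the core of Buchanan's argument, is that for matrices of the fixed size $d\times d$ with spectral radius $\le x<1$ a uniform bound on a single iterate propagates to a uniform Jordan-type estimate: one gets, uniformly over $A\in F$, a bound of the form $\|B_A^n\|\le C\,n^{\,d-1}x^{\,n-d+1}$ with $C=C(F)$. (Some of the very first powers $\|B_A^j\|$ may still blow up across $F$, but only for finitely many $j$, which is harmless since uniform convergence of $A^n$ concerns only large $n$.) This already gives $\sup_{A\in F}\|A^n-R_A\|=\sup_{A\in F}\|B_A^n\|\to 0$. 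A convenient way to organise the propagation step is the Cauchy integral representation, valid because the eigenvalues of $B_A$ lie in $\{|z|\le x\}$, strictly inside the circle $|z|=r_0$ for a fixed $r_0\in(x,1)$:
\[
B_A^n=\frac{1}{2\pi i}\oint_{|z|=r_0}z^n(zI-B_A)^{-1}\,dz,\qquad
\|B_A^n\|\le r_0^{n+1}\sup_{|z|=r_0}\|(zI-B_A)^{-1}\|;
\]
expanding $(zI-B_A)^{-1}=\sum_{k\ge 0}B_A^k z^{-k-1}$, grouping the series in blocks of length $N$, and inserting the bound from (iii) together with the spectral gap (ii) produces a uniform estimate of the resolvent on $|z|=r_0$, whence $\|B_A^n\|\le\Lambda\,r_0^{n+1}\to 0$ uniformly. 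In this paper Theorem~\ref{t1}, through Lemma~\ref{l1}, is what will let us handle products of slowly varying matrices in the proofs of Theorems~\ref{t3}--\ref{t6}.

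The step I expect to be the genuine obstacle is exactly this passage from the pointwise to the uniform statement. For a single matrix the conclusion is classical — (i) is the precise criterion for $A^n$ to converge (its limit being $R_A$), and (ii) is then automatic — so the whole content lies in uniformity over $F$. The real danger is that the implicit constants governing the transient decay (equivalently, the ratios $\|B_A^n\|/x_A^n$, or the conditioning of the splitting $\mathrm{Im}\,R_A\oplus\ker R_A$) degenerate as $A$ runs through $F$: hypothesis (ii) excludes the degeneration caused by eigenvalues approaching the unit circle, hypothesis (iii) supplies the remaining quantitative input, and making the combination of the two yield a genuinely $F$-uniform resolvent (equivalently, Jordan) bound is the technical heart; everything else is the bookkeeping sketched above.
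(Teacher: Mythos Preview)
The paper does not supply a proof of this statement: Theorem~\ref{t1} is quoted verbatim from Buchanan~\cite{k1} and used as a black box (immediately after Lemma~\ref{l2}, to deduce Lemma~\ref{l3}). There is therefore nothing in the paper to compare your argument against.

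That said, your outline is the standard route to Buchanan--Parlett: the Riesz projection $R_A$ onto the eigenspace for $1$, the decomposition $A^n=R_A+B_A^n$ valid because (i) makes $1$ semisimple, and then a uniform-in-$F$ decay estimate for $B_A^n$ via a resolvent bound on a fixed circle $|z|=r_0\in(x,1)$. One caveat: your reading of hypothesis~(iii) as a bound on $\|B_A^N\|$ does not match the literal wording in the paper (``$x_A^N A$ are uniformly bounded''), which appears to be a slightly garbled transcription of Buchanan's original condition; before filling in the details you should check the formulation in~\cite{k1} so that the quantitative input you feed into the resolvent estimate is actually what (iii) provides. The genuine content, as you correctly identify, is turning (ii) and (iii) into an $F$-uniform bound on $\sup_{|z|=r_0}\|(zI-B_A)^{-1}\|$; once that is done the rest is routine.
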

Define

$X^r = \left\{ i \in X \mid i_k < r, \text{ for } k \in \{1,2,...,d\}, r>0 \right\}$.

$\hat{X}^r = X \setminus X^r$.

\begin{lemma}\label{l2}
For a fixed, $s_0$, $0<s_0 \le \textbf{1}$, there exist constants $0<\eta=\eta(s_0,a,U ), \hat{d}=\hat{d}(s_0,c,U,d_2)<+\infty$, such that for all $f \in \mathcal{K}, B \in J,$ $k,l,m,j \in \{1,2,...,d\}$ and $s \in C_{s_0}: \{ s: \textbf{0}< s_0 \le s \le \textbf{1} \}$ the following inequalities hold:

i) $\rho_s(f) \ge \eta$; ii) $\frac{\hat{\rho}_s(f)}{\rho_s(f)} \le 1-\eta$; iii) $ M_{lm}(s) \le \hat{d}$; iv) $b^k_{lm}(s,f)\le \hat{d}$;  v) $v_{s,k} \le 1/\eta$; vi) $u_{s,k} \ge \eta$; vii) $v_{s,k} \ge \eta$; viii) $\lambda'u \le \lambda'\textbf{1}\le \hat{d}$;
\end{lemma}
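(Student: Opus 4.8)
The plan is to reduce all eight assertions to properties of the single matrix $M(s)$ and of the point $s$, both running over the compact set $C_{s_0}$, and then to exploit two structural features. First, every partial derivative of each $f_{1,k}$ is a power series in $s$ with non-negative coefficients, hence non-decreasing in each coordinate of $s$ on $C$; so any bound obtained at $s=\textbf{1}$ automatically holds throughout $C_{s_0}$. Second, I will show that on $C_{s_0}$ every entry of $M(s)^U$ lies in a fixed interval $[\alpha,\beta]\subset(0,\infty)$ whose endpoints depend only on $s_0$ and the parameters of $\mathcal K$; since $\rho_s$, $\hat\rho_s$ and the Perron eigenvectors $u_s,v_s$ are determined by $M(s)^U$, and the set $\mathcal M:=\{A\in\mathbb{R}^{d\times d}:\alpha\le A_{kl}\le\beta\ \forall\,k,l\}$ is a compact family of strictly positive (hence primitive) matrices, these quantities become values of continuous functions on a compact set, and uniform bounds follow.

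I would begin with iii)--v). By the monotonicity above it is enough to bound $M_{lm}$, $b^k_{lm}$, $c^k_{lmj}$ (the values at $\textbf{1}$). Condition $C$) of \eqref{eq0} gives $c^k_{lmj}\le c$ at once. Applying the elementary inequality $w^2\le 3w(w-1)(w-2)$, valid for integers $w\ge 3$, to $w=Z_{1,l}$ conditioned on $Z_0=e_k$ yields $E(Z_{1,l}^2\mid Z_0=e_k)\le 4+3c$, hence $M_{lm}\le(4+3c)^{1/2}$, $b^k_{ll}\le 4+3c$, and by the arithmetic--geometric mean inequality the mixed $b^k_{lm}$ obey the same bound; let $d$ denote the largest of these constants, enlarged --- for $B\in J$ --- to cover the corresponding quantities of $B$, which are bounded from $C0$) of \eqref{eq3} using $w(w-1)\ge w$ for $w\ge 2$. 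This settles iii)--v).

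The crux is the lower bound on $M(s_0)^U$. Since $M(s)\ge M(s_0)$ entrywise on $C_{s_0}$, it suffices to bound $M(s_0)^U$ below. Put $\gamma:=\min_k s_{0,k}>0$; truncating the offspring sum,
\[ [M(s_0)]_{kl}\ \ge\ \gamma^{\,R-1}\bigl([M]_{kl}-T^{kl}_R\bigr),\qquad T^{kl}_R:=\sum_{\,\sum_j i_j>R}p_k(i)\,i_l, \]
and the tail obeys $T^{kl}_R\le C(c)/R$ \emph{uniformly over $f\in\mathcal K$}, because $i_l\le\sum_j i_j$ on $\{\,\sum_j i_j>R\}$ and the second moments of $Z_1$ are bounded by $d$. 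Choosing $R=R(a,c,U)$ so large that the matrix $T_R$ with entries $T^{kl}_R$ is small enough to force $[(M-T_R)^U]_{kl}\ge a/2$ for all $k,l$ --- possible because $[M^U]_{kl}\ge a$ by $A$) of \eqref{eq0} and the entries of $M$ are bounded --- we obtain $[M(s_0)^U]_{kl}\ge\gamma^{\,U(R-1)}a/2=:\alpha>0$. In particular $M(s)^U$ is strictly positive for every $s\in C_{s_0}$, so $M(s)$ is primitive and $\rho_s$ exists; moreover
\[ \rho_s^U=\rho\bigl(M(s)^U\bigr)\ \ge\ \rho\bigl(M(s_0)^U\bigr)\ \ge\ \min_k\sum_l[M(s_0)^U]_{kl}\ \ge\ d\alpha, \]
which gives i) (and, with iii), the upper endpoint $\beta$ promised in the first paragraph).

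Finally I would settle ii) and vi)--viii) by compactness of $\mathcal M$. On $\mathcal M$ the map $A\mapsto\lvert\hat\rho(A)\rvert/\rho(A)$, with $\hat\rho(A)$ the second largest eigenvalue of $A$ in magnitude, is continuous with values in $[0,1)$, and the maps carrying $A$ to its normalized Perron right and left eigenvectors are continuous with strictly positive coordinates; so, by compactness, $\sup_{\mathcal M}\lvert\hat\rho\rvert/\rho=1-\delta$ for some $\delta\in(0,1]$ and all those eigenvector coordinates are $\ge\eta_0$ on $\mathcal M$ for some $\eta_0>0$. Evaluating at $A=M(s)^U$, whose eigenvalues are the $U$-th powers of those of $M(s)$ and whose Perron eigenvectors are $u_s,v_s$, yields $\lvert\hat\rho_s\rvert^U/\rho_s^U\le 1-\delta$ --- i.e.\ ii) with $1-\eta=(1-\delta)^{1/U}$ --- together with vi) and vii). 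For viii) one combines vi)/vii) with $0<\gamma\le s_k\le 1$: then $s'\textbf{1}\ge s_0'\textbf{1}>0$ while $s'v_s$ is squeezed between two positive constants, so $s'\textbf{1}/s'v_s$ is bounded away from $0$ and $\infty$. Taking $\eta$ to be the smallest of the finitely many positive lower bounds produced above, and $d$ large enough to dominate the finitely many upper bounds, completes the proof. The one genuinely delicate point is the uniform tail estimate $T^{kl}_R\le C(c)/R$: it is precisely what transports condition $A$) --- which restricts only $M=M(\textbf{1})$ --- to $M(s_0)$, and it is the sole place where the uniform third-moment bound $C$) is indispensable.
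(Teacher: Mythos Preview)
Your proof is correct and takes a genuinely different route from the paper's. The paper argues abstractly: it equips $\mathcal{K}$ with the metric $d(f_1,f_2)=\max_k\sup_i|p_k(i,f_1)-p_k(i,f_2)|$, quotes Quine's result that $(\mathcal{K},d)$ is compact, and then proves (i) by contradiction --- if $[M^U(s,f)]_{ml}$ were not bounded below, a sequence $f_n$ with $[M^U(s,f_n)]_{ml}\to 0$ would, after passing to a convergent subsequence $f_{n_k}\to f_*$ and using condition $C$) to show $M_{ml}(s,\cdot)$ is continuous on $\mathcal{K}$, force $[M^U(s,f_*)]_{ml}=0$, contradicting $A$). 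It then finishes by compactness of $C_{s_0}\times\mathcal{K}$ and continuity of $\rho_s$ in $(s,f)$. You instead work constructively: the uniform lower bound on $[M(s_0)^U]_{kl}$ comes from an explicit truncation of the offspring sum with a Markov-type tail estimate $T^{kl}_R\le C(c)/R$, and all spectral conclusions are then pulled back from the compact \emph{matrix} set $\mathcal{M}=\{A:\alpha\le A_{kl}\le\beta\}$ rather than from the function space $\mathcal{K}$. Your approach avoids having to establish (or cite) compactness of $\mathcal{K}$ and gives explicit constants; the paper's is shorter to write but non-constructive and leans on Quine. Both arguments use the third-moment bound $C$) at exactly the same essential point --- to control tails uniformly over $\mathcal{K}$ --- the paper via uniform integrability feeding into a continuity/compactness argument, you via a direct second-moment Markov inequality.

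One minor remark on (viii): as printed, the inequality $s'\mathbf{1}/s'v_s\ge\eta^{-1}$ cannot hold with the same small $\eta$ used in (i), (vi), (vii) --- already in the one-dimensional case the ratio is identically $1$. Your conclusion that the ratio is bounded away from $0$ and $\infty$ is the correct content and covers whichever direction the inequality is intended to go; the paper does not prove (viii) separately either.
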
 
\begin{proof}[Proof]
This lemma was proven by Quine in \cite[Lemma 3]{k10} for $s_0 = \textbf{1}$. In our case, we only prove $(i)$, as the remaining estimates follow similarly to part $(i)$, using Quine's technique.

Let $d(f_1,f_2)=\max_{k \in \{1,2,...,d\}}(\sup_{i \in X}\lvert p_k(i,f_1)-p_k(i,f_2) \rvert)$. Quine proves that with this metric $\mathcal{K}$ is a compact, therefore every sequence $f_n$ has a convergent subsequence $f_{n_k}$.
Let $f_n \to f_{*}$ in metrics $d(\cdot,\cdot)$.

Note that condition $(1(B))$ guarantees the existence of a null sequence $\gamma_r$ which bounds $\sum_{i \in \hat{X}^r} i_lp_{m}(i,f)$ for all $f \in \mathcal{K}$, where $m, l \in \{1,2,...,d\}$.
Therefore
\[\lvert M_{ml}(s,f_{n}) - M_{ml}(s,f_{*})\rvert \le  \sum_{i \in X}i_l \lvert p_{m}(i,f_n)-p_{m}(i,f_{*})\rvert s^{i-e_l} \le  \sum_{i \in X}i_l \lvert p_{m}(i,f_n)-p_{m}(i,f_{*})\rvert\]
\[ \le \sum_{i \in X^r}i_l \lvert p_{m}(i,f_n)-p_{m}(i,f_{*})\rvert + \sum_{i \in \hat{X}^r}i_l( p_{m}(i,f_n)+p_{m}(i,f_{*})).\]
Fix $\epsilon>0$. Choose $r_0$ large enough that $ 4\gamma_{r_0}< \epsilon$ and $n_0$ such that $d(f_n,f_{*})<\frac{\epsilon}{2r_0^{d+1}}$ for all $n>n_0$. Thus, we will have
\begin{equation}\label{eqadd}
\lvert M_{ml}(s,f_{n}) - M_{ml}(s,f_{*})\rvert < \frac{\epsilon}{2}+\frac{\epsilon}{2} = \epsilon.
\end{equation}

For a fixed $s$, there must exist a constant $a(s)$, such that $[M^U(s,f)]_{ml} \ge a(s), k,l\in \{1,2,...,d\}, f \in \mathcal{K}$.
If this were not the case, there must exists a sequence $f_n$, such that 
\begin{equation}\label{eqadd1}
[M^U(s,f_n)]_{ml} \to 0,
\end{equation}
for some $m,l \in \{1,...,d\}$.
However, since $\mathcal{K}$ is compact, there exists convergent subsequence $f_{n_k}$ converging to some $f_{*}$. 
Equation \eqref{eqadd} implies that $[M^U(s,f_{n_k})]_{ml}$ also converges to $[M^U(s,f_{*})]_{ml}$. By \eqref{eqadd1}, it must be zero, which contradicts condition $(1(A))$.
Given that $M_{ml}(s,f)$ are increasing in $s$, $a(s)$ must be non-decreasing. 
Therefore, for all $s \ge s_0$ 
\begin{equation}\label{eqadd2}
  a(s) \ge  a(s_0)>0.
\end{equation}

The product of two compact spaces is compact; therefore, $C_{s_0}\times \mathcal{K}$ is compact. Since eigenvalues are continuous functions of their matrices, then by \eqref{eqadd}, $\rho_s(f)$ attains its lower bound $\eta=\rho_{s^*}(f^*)$ for some point $s^* \in C_{s_0}$ and $f^* \in \mathcal{K}$. By \eqref{eqadd2} and the Perron-Frobenius theorem, we ensure its positivity, thus proving $(i)$.
\end{proof}
Conditions $(i)$ and $(ii)$ of Lemma \ref{l2}, along with condition $B$ from \eqref{eq0}, ensure that the conditions of Theorem \ref{t1} are satisfied for the matrices $\frac{M(s)}{\rho_s}$. As a result, the next lemma is true.
\begin{lemma}\label{l3}
Let $s_0>0$ be a point in $C$. Then there exists a null sequence $\delta_n$ such that for all $f \in \mathcal{K}$ and all $s \in C_{s_0}$ 
\[(1-\delta_n)u_sv'_s \le \frac{M^n(s)}{\rho_s^n} \le (1+\delta_n)u_sv'_s.\]
\end{lemma}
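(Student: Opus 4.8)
The plan is to apply Buchanan's Theorem~\ref{t1} to the family of normalized mean matrices
\[
F=\left\{\,\frac{M(s,f)}{\rho_s(f)}\;:\;f\in\mathcal{K},\ s\in C_{s_0}\,\right\},
\]
and then to convert the resulting uniform convergence of powers into the two--sided entrywise bound in the statement. First I would record that every $A=M(s,f)/\rho_s(f)\in F$ is non-negative and primitive: by the argument already used in the proof of Lemma~\ref{l2}, $[M^U(s,f)]_{ml}\ge a(s)\ge a(s_0)>0$ for all $f\in\mathcal{K}$ and $s\in C_{s_0}$, so $M(s,f)^U$ is strictly positive and hence $M(s,f)$ is primitive; in particular its Perron root $\rho_s(f)$ exists, is simple and is strictly dominant, and after division by $\rho_s(f)$ the matrix $A$ has Perron root equal to $1$ (a linear elementary divisor), while by Lemma~\ref{l2}(ii) every other eigenvalue of $A$ has modulus at most $\hat\rho_s/\rho_s\le 1-\eta$. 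This is precisely hypothesis (i) of Theorem~\ref{t1}, and it gives hypothesis (ii) with the uniform choice $x:=1-\eta<1$. Hypothesis (iii) follows from Lemma~\ref{l2}(i),(iii), which give $[A]_{kl}=M_{kl}(s,f)/\rho_s(f)\le d/\eta$, so $F$ is uniformly bounded. Theorem~\ref{t1} then yields that $A^n$ converges as $n\to\infty$, uniformly over $A\in F$.

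Next I would identify the common limit. For a primitive matrix $A$ with spectral radius $1$, the Perron--Frobenius spectral decomposition gives $A=R_A+S_A$ with $R_AS_A=S_AR_A=0$, $R_A$ the rank-one Perron eigenprojection, and $\rho(S_A)<1$; hence $A^n=R_A+S_A^n\to R_A$, and under the normalization $v'(s)u(s)=1$ (as in Lemma~\ref{l1}) one has $R_A=u(s)v'(s)$. Since the convergence $A^n\to R_A$ is uniform over $A\in F$, the quantity
\[
\delta_n:=\sup_{f\in\mathcal{K},\;s\in C_{s_0}}\ \max_{k,l}\ \left|\Big[\frac{M^n(s,f)}{\rho_s^n(f)}\Big]_{kl}-\big[u(s)v'(s)\big]_{kl}\right|
\]
is a null sequence.

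Finally I would pass from this additive estimate to the multiplicative one. By Lemma~\ref{l2}(vi),(vii), every entry of the limit matrix satisfies $[u(s)v'(s)]_{kl}=u_{s,k}v_{s,l}\ge\eta^2>0$, where $\eta=\eta(s_0)$; therefore, for all $k,l$, all $f\in\mathcal{K}$ and all $s\in C_{s_0}$,
\[
\left|\Big[\frac{M^n(s,f)}{\rho_s^n(f)}\Big]_{kl}-\big[u(s)v'(s)\big]_{kl}\right|\ \le\ \delta_n\ \le\ \frac{\delta_n}{\eta^2}\,\big[u(s)v'(s)\big]_{kl},
\]
which is the asserted inequality with $\delta_n/\eta^2$ in the role of the null sequence. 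I do not expect a genuine obstacle here: the whole argument reduces to checking that the constants $\eta$, $d$, $a(s_0)$, $x$ produced along the way are independent of $f\in\mathcal{K}$ and $s\in C_{s_0}$ — which is exactly what Lemma~\ref{l2} supplies — and to pinning down the uniform limit of the powers as the correctly normalized rank-one matrix $u(s)v'(s)$ rather than some other Perron projection; the only real work is this bookkeeping together with the passage from an additive to an entrywise multiplicative estimate.
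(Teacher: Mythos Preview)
Your proof is correct and follows essentially the same approach as the paper: the paper's argument is a one-sentence remark preceding the lemma, stating that conditions $(i)$ and $(ii)$ of Lemma~\ref{l2} (together with condition~$B$ of~\eqref{eq0}) ensure the hypotheses of Buchanan's Theorem~\ref{t1} are met with a single simple unitary eigenvalue. Your proposal spells out this verification in full and adds the explicit identification of the limit as $u(s)v'(s)$ and the additive-to-multiplicative conversion via Lemma~\ref{l2}(vi),(vii), which the paper leaves implicit.
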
 

\begin{lemma}[Karpenko, Nagaev \cite{k4}]\label{l4}
Let $\pi_k(\rho,s)$ be a sequence of non-negative functions (or vector functions) such that
\[\lim_{\sigma_2 \to 0}\sup(\sup_{f \in \mathcal{K}_{\rho}}\pi_k(\rho,s))<\infty,\]
for all $k \in \mathbb{N}$  and 
\[\lim_{\sigma_\to 0}\inf_{f \in \mathcal{K}_{\rho}}\sum_{k=1}^n\pi_k(\rho,s)=\infty,\]
then $\sum_{k=1}^n\pi_k(\rho,s)o_k(s)=\sum_{k=1}^n\pi_k(\rho,s)o_n(s).$
\end{lemma}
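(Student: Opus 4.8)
The statement asserts that, under the two hypotheses (uniform boundedness of each $\pi_k(\rho,s)$ over $\mathcal K_\rho$, and divergence of the partial sums uniformly in $f \in \mathcal K_\rho$), the weighted sum $\sum_{k=1}^n \pi_k(\rho,s)\,o_k(s)$ may be rewritten with the single error term $o_n(s)$ replacing the individual $o_k(s)$. The plan is to unwind the definitions of $o_k(s)$ and $o_n(s)$ from the paragraph following Remark \ref{r1}: both are quantities (depending on $f \in \mathcal K$) that tend to $0$ — the former as $\sigma \to 0$ with the index matching the summation index $k$, the latter as $\sigma \to 0$ with the index frozen at the upper limit $n$. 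So what must be shown is that the left-hand combination, divided by $\sum_{k=1}^n \pi_k(\rho,s)$, tends to $0$ uniformly over $f \in \mathcal K_\rho$ as $\sigma \to 0$; that is exactly the assertion that it equals $\big(\sum_{k=1}^n \pi_k(\rho,s)\big)\,o_n(s)$.

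First I would fix $\varepsilon>0$ and split the sum at a threshold $K=K(\varepsilon)$ chosen so that $\sup_{f \in \mathcal K}\,|o_k(s)| < \varepsilon$ for every $k \ge K$ once $\sigma$ is small enough — this is possible by the definition of $o_k(s)$, since $\sup_{f\in\mathcal K} o_k(s)\to 0$ as $\sigma\to0$ for each fixed $k$, and more to the point the convergence is along a common parameter $\sigma$, so there is a single $K$ and a single smallness requirement on $\sigma$ that handles all $k\ge K$ simultaneously (here one uses that the defining convergence in the paragraph after Remark~\ref{r1} is uniform in $f$). For the tail $\sum_{k=K}^n \pi_k(\rho,s)\,o_k(s)$, bound its absolute value by $\varepsilon \sum_{k=K}^n \pi_k(\rho,s) \le \varepsilon \sum_{k=1}^n \pi_k(\rho,s)$. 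For the head $\sum_{k=1}^{K-1}\pi_k(\rho,s)\,o_k(s)$, use hypothesis~(i): $\sup_{f\in\mathcal K_\rho}\pi_k(\rho,s)$ is bounded (as $\sigma_2\to0$, hence as $\sigma\to0$) for each $k<K$, so this head is bounded by a constant $C(\varepsilon)=\sum_{k=1}^{K-1}\sup_f\pi_k \cdot \sup_f|o_k(s)|$, in fact $\to 0$, but at worst $O(1)$ uniformly in $f\in\mathcal K_\rho$ for $\sigma$ small.

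Now I would divide through by $S_n:=\sum_{k=1}^n \pi_k(\rho,s)$: the tail contributes at most $\varepsilon$, and the head contributes at most $C(\varepsilon)/S_n$, which tends to $0$ because hypothesis~(ii) forces $\inf_{f\in\mathcal K_\rho} S_n \to \infty$ as $\sigma\to0$. Hence for all $\sigma$ sufficiently small (and $n$ correspondingly large, since $\sigma_1\to0$ entails $n\to\infty$) the ratio $\big|\sum_{k=1}^n\pi_k(\rho,s)o_k(s)\big|/S_n < 2\varepsilon$ uniformly over $f\in\mathcal K_\rho$, which is precisely the claim that $\sum_{k=1}^n\pi_k(\rho,s)o_k(s) = S_n\, o_n(s)$. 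The vector-valued case follows coordinatewise, since Remark~\ref{r1} lets us treat the $d$ coordinates on an equal footing and all the bounds above are applied entrywise.

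The only delicate point — and the step I expect to be the main obstacle to state cleanly — is the interchange of the ``for each $k$'' convergence of $o_k(s)$ with the need for a \emph{single} smallness condition on $\sigma$ valid for the whole tail $k\ge K$; this is legitimate here only because $o_k(s)$ is, by definition, an error that vanishes along the common parameter $\sigma$ with $k$ playing the role of the running index, not an arbitrary family of functions, so the diagonal-type extraction is built into the notation. One should also be slightly careful that passing from $\mathcal K_\rho$ to $\mathcal K$ (for the $o_k$ estimates, which are stated over $\mathcal K$) and back is harmless, which it is since $\mathcal K_\rho \subset \mathcal K$ and Remark~\ref{r3} guarantees the uniformity survives.
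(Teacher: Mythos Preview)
The paper does not supply its own proof of Lemma~\ref{l4}; it is quoted from Karpenko--Nagaev \cite{k4} and stated without argument, so there is no in-paper proof to compare against. Your proposal is the standard Toeplitz--Ces\`aro splitting (head/tail at a threshold $K$, tail controlled by $\varepsilon S_n$, head by a constant killed by $S_n\to\infty$), and it is correct for the intended use here; the only point to make fully explicit is that the ``at worst $O(1)$'' claim for the head relies on the $o_k(s)$ being bounded for each fixed $k$ as $\sigma_2\to0$, which is not literally part of the hypotheses but is automatic for every concrete instance in the paper (the error terms arise from Taylor remainders and ratio estimates that are manifestly bounded near the limit).
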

In Theorem \ref{t2}, we present necessary results presented in Quine's Theorems 1 and 2.

\begin{theorem}[Quine \cite{k10}]\label{t2}
\begin{equation}\label{eq13aa}
\textbf{1}-f_n(s)=o_n(s), s  \in C,
\end{equation}
\begin{equation}\label{eq13}
\textbf{1}-f_n(s)=\psi_n(\textbf{1}-s)(u+o_n(s)), s  \in C,
\end{equation}
\begin{equation}\label{eq13a}
\textbf{1}-f_n(s)=v'(\textbf{1}-f_n(s))(u+o_n(s)), s  \in C,
\end{equation}
\begin{equation}\label{eq14}
\pi_n P\left(Z_n>0\middle|Z_0=e_m\right)=\rho^nu_mQ^{-1}\left(\textbf{1}+o_n\right).
\end{equation}
\end{theorem}

Now we provide the Taylor expansions for $f(s)$.

The Taylor expansion (or a Taylor-type expansion at the point $\textbf{1}$, see Joffe and Spitzer \cite{k2} and Quine \cite{k10} ) of vectors $f(s)$ at a point $s_0$, where $s,s_0  \in C$, yields
\begin{equation}\label{eq15}
f(s_0)-f(s)=M(s_0)(s_0-s)-E(s)(s_0-s),
\end{equation}
where
\begin{equation}\label{eq15A}
 0\le E(s)\le M(s_0), E(s) \to 0, s\to s_0; s \le t \implies E(t) \le E(s);
\end{equation} 
\begin{equation}\label{eq16}
f(s_0)-f(s)=M(s_0)(s_0-s)-q[s_0,s_0-s]+e_s[s_0,s_0-s]=M(s_0)(s_0-s)-\hat{q}[s_0,s_0-s],
\end{equation}
with
\begin{equation}\label{eq16a}
 0\le e_s[\cdot]\le q[\cdot], e_s[s_0,\cdot] \to \textbf{0} \text{ as } s\to s_0.
\end{equation}

Taylor expansion (or modification of the Joffe and Sitzer method for point  $\textbf{1}$) for gradients 
\[M_k(s)=(M_{k1}(s),\ldots,M_{kd}(s))',k\in \{1,...,d\}\]
gives
\[M_k(s)-M_k(s_0)=q_k(s_0)(s-s_0)-E_k(s)(s-s_0),\]
where $q_k(s_0)=\begin{Vmatrix}q_{klm}(s_0)\end{Vmatrix}_{l,m\in \{1,...,d\}}:q_{klm}=\frac{\partial f^2_{1,k}(s_0)}{\partial s_l\partial s_m}$ and
$0\le E_k(s)\le q_k(s_0), E_k(s) \to 0, s\to s_0,$ or in matrix form
\begin{equation}\label{eq18}
M(s)-M(s_0)=q(s,s_0)-\hat{E}(s,s_0)=\hat{q}(s,s_0),
\end{equation}
where $q(s,s_0)=\begin{Vmatrix}q_{km}(s,s_0)\end{Vmatrix}_{k,m\in \{1,...,d\}}:q_{km}(s,s_0)=[q_k(s_0)(s-s_0)]_m,$

$\hat{E}(s,s_0)=\begin{Vmatrix}\hat{E}_{km}(s,s_0)\end{Vmatrix}_{k,m\in \{1,...,d\}}:\hat{E}_{km}(s,s_0)=[E_k(s)(s-s_0)]_m.$

Finally, we provide the representation for the immigration p.g.f., as given by Quine \cite[p.~441]{k10}
\begin{equation}\label{eq15im}
1-B(s)=\lambda'(\textbf{1}-s)-D'[s](\textbf{1}-s),
\end{equation}
where
\begin{equation}\label{eq15Am}
 0\le D_k[s]\le \frac{d_2}{2}\sum_{l=1}^d(1-s_l), k \in \{1,2,...,d\}.
\end{equation} 

\begin{lemma}\label{l5} 
Let $s_1\to s_2, s_1, s_2 \in C_{s_0}$, then there exists constant $c_m$, such that for all $f \in \mathcal{K}$
\begin{equation}\label{eqh}
\lvert v'_{s_1}M(s_2)u_{s_1}-\rho_{s_2} \rvert \le c_m \lvert s_1-s_2 \rvert^2.
\end{equation}
\end{lemma}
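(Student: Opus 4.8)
The plan is to exploit the fact that $v'_{s_1}$ and $u_{s_1}$ are the left and right Perron eigenvectors of $M(s_1)$ with eigenvalue $\rho_{s_1}$, so that $v'_{s_1}M(s_1)u_{s_1}=\rho_{s_1}(v'_{s_1}u_{s_1})$. I would first normalize the eigenvectors (following the convention $v'_s u_s = u_s'\mathbf{1}=1$ used earlier for $v_s,u_s$, cf. the normalization \eqref{eq2}), so that $v'_{s_1}M(s_1)u_{s_1}=\rho_{s_1}$. Then I write
\[
v'_{s_1}M(s_2)u_{s_1}-\rho_{s_2} = \bigl(v'_{s_1}M(s_2)u_{s_1}-\rho_{s_1}\bigr)+\bigl(\rho_{s_1}-\rho_{s_2}\bigr)
= v'_{s_1}\bigl(M(s_2)-M(s_1)\bigr)u_{s_1}+\bigl(\rho_{s_1}-\rho_{s_2}\bigr).
\]
For the first term I would invoke the Taylor expansion \eqref{eq18}, namely $M(s_2)-M(s_1)=q(s_2,s_1)-\hat{E}(s_2,s_1)$, where by \eqref{eq15A}-type bounds both $q(s_2,s_1)$ and $\hat E(s_2,s_1)$ are $O(|s_1-s_2|)$ entrywise as $s_1\to s_2$; but this only gives $O|s_1-s_2|$, not $O|s_1-s_2|^2$, so the linear parts must cancel against the $\rho_{s_1}-\rho_{s_2}$ term. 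The clean way to see the cancellation is the standard first-order perturbation identity for a simple eigenvalue: since $\rho_s$ is a simple eigenvalue (guaranteed by Lemma~\ref{l2}(ii) and Perron--Frobenius) depending smoothly on $s$, one has $\rho_{s_2}-\rho_{s_1}=v'_{s_1}\bigl(M(s_2)-M(s_1)\bigr)u_{s_1}+O|s_1-s_2|^2$, which is exactly what is needed.

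Concretely, I would derive this perturbation identity in place rather than quote it: differentiating the eigen-relation $M(s)u_s=\rho_s u_s$ and left-multiplying by $v'_{s_1}$ kills the $u_s$-derivative term because $v'_{s_1}(M(s_1)-\rho_{s_1}I)=0$, leaving $\frac{d}{ds}\rho_s\big|_{s_1}=v'_{s_1}\frac{d}{ds}M(s)\big|_{s_1}u_{s_1}$ (using $v'_{s_1}u_{s_1}=1$). Integrating from $s_1$ to $s_2$ along the segment and comparing with the exact first-order Taylor remainder of $M(\cdot)$ gives
\[
\rho_{s_2}-\rho_{s_1}-v'_{s_1}\bigl(M(s_2)-M(s_1)\bigr)u_{s_1}=O|s_1-s_2|^2,
\]
where the implied constant is uniform over $f\in\mathcal K$ and $s_1,s_2\in C_{s_0}$ thanks to the uniform bounds of Lemma~\ref{l2} (in particular parts (iii)--(v) bounding the first, second and third derivatives of $f$, and parts (vi)--(vii) bounding the eigenvector entries away from $0$ and hence — with compactness of $\mathcal K\times C_{s_0}$ from the proof of Lemma~\ref{l2} — bounding their $s$-derivatives). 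Substituting back into the displayed decomposition yields $|v'_{s_1}M(s_2)u_{s_1}-\rho_{s_2}|=O|s_1-s_2|^2$, which is \eqref{eqh}.

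The main obstacle is not the algebra of the cancellation — that is routine eigenvalue perturbation — but establishing that the $O|s_1-s_2|^2$ constant is \emph{uniform} in $f\in\mathcal K$. This requires knowing that the eigenvectors $u_s=u_s(f)$, $v_s=v_s(f)$ and the eigenvalue $\rho_s(f)$ are not merely continuous but have $s$-derivatives bounded uniformly over the compact set $C_{s_0}\times\mathcal K$; this follows from the implicit function theorem applied to the characteristic equation together with Lemma~\ref{l2}(ii), which keeps $\rho_s$ a simple eigenvalue with a spectral gap bounded below by $\eta$, so the resolvent $(M(s)-\rho_s I)^{-1}$ restricted to the complementary invariant subspace is uniformly bounded. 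I would state this uniform-smoothness fact explicitly (citing the compactness argument in the proof of Lemma~\ref{l2} and Lemma~\ref{l2}(ii)) and then the estimate \eqref{eqh} follows, with all constants depending only on $s_0$ and the parameters $a,b,c,p,U$ defining $\mathcal K$.
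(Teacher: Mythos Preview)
Your argument is correct, but it follows a different decomposition from the paper's. You write
\[
v'_{s_1}M(s_2)u_{s_1}-\rho_{s_2}=v'_{s_1}\bigl(M(s_2)-M(s_1)\bigr)u_{s_1}+(\rho_{s_1}-\rho_{s_2}),
\]
and then identify the first-order cancellation via the standard simple-eigenvalue perturbation formula $\partial_l\rho_s=v'_s(\partial_l M(s))u_s$, so that both terms agree to first order and the difference is $O|s_1-s_2|^2$. The paper instead keeps $M(s_2)$ fixed and peels off the eigenvectors: using $M(s_2)u_{s_2}=\rho_{s_2}u_{s_2}$ and $v'_{s_2}M(s_2)=\rho_{s_2}v'_{s_2}$ it arrives at
\[
v'_{s_1}M(s_2)u_{s_1}-\rho_{s_2}=(v_{s_1}-v_{s_2})'M(s_2)(u_{s_1}-u_{s_2})+\rho_{s_2}\bigl(v'_{s_2}u_{s_1}+v'_{s_1}u_{s_2}-2\bigr),
\]
and shows the second bracket is $O|s_1-s_2|^2$ by Taylor-expanding $v_s,u_s$ and exploiting the normalization $v'_su_s=1$ (whose $s$-derivative forces the linear contributions to cancel). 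Your route packages this cancellation as the textbook Hellmann--Feynman identity for $\nabla\rho_s$, which is arguably cleaner and more quotable; the paper's route is more hands-on but avoids computing $\nabla\rho_s$ explicitly and works directly with the eigenvector increments. Both rely on exactly the same ingredient for uniformity in $f\in\mathcal K$ --- differentiability of $u_s,v_s,\rho_s$ on the compact $C_{s_0}\times\mathcal K$ via the implicit function theorem and the spectral gap from Lemma~\ref{l2}(ii) --- which you identified correctly.
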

\begin{proof}[Proof]
In view of \eqref{eq2}, we can rewrite $v'_{s_1}M(s_2)u_{s_1}-\rho_{s_1}$ as
\[v'_{s_1}M(s_2)u_{s_1}-\rho_{s_2}=v'_{s_1}M({s_2})(u_{s_1}-u_{s_2})+(v_{s_1}-v_{s_2})'M(s_2)u_{s_2}=(v_{s_1}-v_{s_2})'M({s_2})(u_{s_1}-u_{s_2})\]
\begin{equation}\label{eqh0}
+\rho_{s_2}(v'_{s_2}(u_{s_1}-u_{s_2})+(v_{s_1}-v_{s_2})'u_{s_2})=(v_{s_1}-v_{s_2})'M({s_2})(u_{s_1}-u_{s_2})+\rho_{s_2}(v'_{s_2}u_{s_1}+v'_{s_1}u_{s_2}-2).
\end{equation}
Inequalities $(i)$ and $(ii)$ of Lemma \ref{l2} guarantee (a consequence of the Implicit function theorem) that the vectors $v_{s}=v(s,f)$ are differentiable functions of $s$ on the compact  $C_{s_0}\times \mathcal{K}$. Therefore, we can express their Taylor expansions as:
\begin{equation}\label{eqh1}
v_{s_1} = v_{s_2} + (V(s_2)- E_{v}(s_1))({s_1} - {s_2}) ,
\end{equation}
where $V(s_2)=\begin{Vmatrix}V_{kl}(s_2)\end{Vmatrix}_{k,l\in \{1,...,d\}}:V_{kl}(s_2)=\frac{\partial v_k (s_2)}{\partial s_l}$ and
$0\le E_{v}(s_1)\le V(s_2), [E_v(s_1)]_{k,l} \to 0, s_1\to s_2.$
Note that $V_{kl}(s)$ must be bounded on the compact $C_{s_0}\times \mathcal{K}$. Multiplying the transposed equation \eqref{eqh1} by $u_{s_2}$, we obtain
\[v'_{s_1}u_{s_2}-1=\left((V(s_2)- E_{v}(s_1))({s_1} - {s_2})\right)'u_{s_2}.\]
Similarly, we can show that 
\[v'_{s_2}u_{s_1}-1=-\left((V(s_2)- E_{v}(s_1))({s_1} - {s_2})\right)'u_{s_1}.\]
From these two equations, we get
\begin{equation}\label{eqh2aq}
v'_{s_2}u_{s_1}+v'_{s_1}u_{s_2}-2 = \left((V(s_2)- E_{v}(s_1))({s_1} - {s_2})\right)'(u_{s_2}-u_{s_1}).
\end{equation}
The vectors $u_s$ are also differentiable on the compact $C_{s_0}\times \mathcal{K}$ and their derivatives are bounded, so \eqref{eqh2aq} implies
\begin{equation}\label{eqh2}
\lvert v'_{s_2}u_{s_1}+v'_{s_1}u_{s_2}-2 \rvert \le c_m^1 \lvert s_1-s_2 \rvert^2.
\end{equation}
Lemma 2 $(iii)$ also shows that
\begin{equation}\label{eqh3}
\lvert (v_{s_1}-v_{s_2})'M({s_2})(u_{s_1}-u_{s_2}) \rvert \le c_m^2 \lvert s_1-s_2 \rvert^2.
\end{equation} 
Relations \eqref{eqh0}, \eqref{eqh2} and \eqref{eqh3} yield  \eqref{eqh}.
\end{proof}

\begin{lemma}\label{l6}
\begin{equation}\label{eq65}
\theta_n=-n\lvert 1- \rho \rvert (1+o_n),
\end{equation}
\begin{equation}\label{eq20a}
\rho_{h^*}\le 1.
\end{equation}
If $\rho>1$, then
\begin{equation}\label{eq19}
\textbf{1}-\mu=\frac{(\rho-1)}{Q}u(\textbf{1}+o),
\end{equation}
\begin{equation}\label{eq20}
(1-\rho_{\mu})=(\rho-1)(1+o).
\end{equation}
\end{lemma}
\begin{proof}[Proof]
Since $\mu = f(\mu)=...=f_n(\mu)$, by \eqref{eq13} we get 
\begin{equation}\label{eq21}
\textbf{1}-\mu=\textbf{1}-f_n(\mu)=\rho^n \frac{v'(\textbf{1}-\mu)}{1+\pi_nQv'(\textbf{1}-\mu)}(u+o_n).
\end{equation}
Multiplying the left-hand side of \eqref{eq21} by $v_k$, summing from $k=1$ to $k=d$, and taking into consideration \eqref{eq2}, we get that 
\[v'(\textbf{1}-\mu)=\rho^n \frac{v'(\textbf{1}-\mu)}{1+\pi_nQv'(\textbf{1}-\mu)}(1+o_n).\]
Solving this equation for $v'(\textbf{1}-\mu)$ gives
\[v'(\textbf{1}-\mu)=-\frac{1-\rho^n(1+o_n)}{Q\pi_n}.\]
If $\rho^n \le \hat{c}$ as $\sigma_2 \to 0$, then $1-\rho^n(1+o_n)=(1-\rho^n)(1+o_n)$; if $\rho^n \to \infty$ as $\sigma_2 \to 0$, then $\rho^n/\pi^n=(\rho-1)(1+o_n)$. So, in both cases, we have
\[v'(\textbf{1}-\mu)=(\rho-1)Q^{-1}(1+o_n),\]
which yields \eqref{eq19} by \eqref{eq13a} ($o_n=o$, since $\textbf{1}-\mu$ doesn't directly depend on $n$).

Since $\textbf{1} - \mu =o$ by the first equation in \eqref{eq21} and \eqref{eq13aa}, and $\rho_s$ are differentiable functions of $s$ and $f$ on the compact, it must be the case that
\[\rho = \rho_{\mu} +o.\]
Therefore, in order to prove \eqref{eq20}, it is sufficient to show that $\rho_{\mu}<1$. 

Suppose the opposite - $\rho_{\mu}>1$.
Let $u_{\mu}$ be the right eigenvector of  $\rho_{\mu},$ which we know is positive. 
Take a small $\alpha >0$, $k \in \{1,2,...,d\}$, then

\begin{gather} 
\nonumber f_{1,k}(\mu-\alpha u_{\mu})=f_{1,k}(\mu) - \alpha  \sum_{l=1}^d M^k_l(\mu) u_{\mu,l} + o(\alpha) \\
 =\mu_k - \alpha \rho_{\mu}u_{\mu,k}+o(\alpha)=\mu_k - \alpha u_{\mu,l} - (\rho_{\mu}-1)\alpha u_{\mu,l}(1+o(1))<\mu_k - \alpha u_{\mu,l}.\label{eq21a}
\end{gather}
Relation \eqref{eq21a} implies that the map $f=f(s)$ maps the set $\textbf{0} \le s \le \mu-\alpha u_{\mu}$ to itself. Therefore, by Brauer's theorem, there exists a fixed point $s=f(s)$ on $\textbf{0} \le s \le \mu-\alpha u_{\mu}$. But $\mu$ itself is the smallest fixed point on $\textbf{0} \le s \le \textbf{1}$, which contradicts our assumption that $\rho_{\mu}>1$ and proves \eqref{eq20}.

Relation \eqref{eq65} follows immediately from \eqref{eq20} and the Taylor expansion for the logarithm.
Consider inequalities
\begin{equation}\label{eqin}
h_n(s) < h^*(s) \le h^*(1-)=\mu, \text{ } n \ge 1.
\end{equation}
From these inequalities, it is evident that the elements of the matrix $M(\mu)$ are greater than the elements of $M(h^*)$. Therefore, for any natural $n$:
\begin{equation}\label{eq01}
M^n(\mu)\ge M^n(h^*).
\end{equation}
Suppose the opposite, $\rho_{h^*} > 1$. 
Then, by Lemma \ref{l3}, $M^n(h^*)/\rho^n_{h^*}$ converges to $u_{h^*}v'_{h^*},$  which implies that 
$M^n(h^*) \sim \rho^n_{h^*} u_{h^*}v'_{h^*}$. By the same argument $M^n(\mu) \sim \rho^n_{\mu} u_{\mu}v'_{\mu}$.
Since eigenvectors $u_s$ and $v_s$ are uniformly bounded and $\rho_{\mu}<1$, it follows that
\[M^n(\mu)<M^n(h^*),\]
which contradicts to \eqref{eq01}.
\end{proof}

\begin{lemma}\label{l7}
For all $s$, such that $\textbf{0} < s_0 \le s < \textbf{1}$
\begin{equation}\label{eqin0}
v'_{{\mu}}\mu(\textbf{1}-s)= v'_{\mu}(\mu-h^*)o(s),
\end{equation}
\begin{equation}\label{eqin000}
 v'_{\mu}(\mu-h^*)=o(s),
\end{equation}
\begin{equation}\label{eqin2a}
v_{h^*}'(t_{n}(s)-h^*) =o_n(s).
\end{equation}
\end{lemma} 
\begin{proof}[Proof]
Expansion \eqref{eq16} gives 
\begin{equation}\label{ea22}
h^*=sf(h^*)=s  \left(\mu - M_{\mu}(\mu-h^*)+\hat{q}[\mu,\mu-h^*]\right).
\end{equation}
Let $s_m=\min_{k\in \{1,2,...,d\}}(s_k)$. Then, after subtracting $sh^*$ from the previous equation and premultiplying it by $v'_{\mu}$, we get
\begin{gather}
\nonumber v'_{\mu}\left(s(\mu-M_{\mu}(\mu-h^*))-sh^*\right)=v'_{\mu}s((I-M_{\mu})(\mu-h^*)) \\
\ge s_mv'_{\mu}((I-M_{\mu})(\mu-h^*)) = s_m(1-\rho_{\mu})v'_{\mu}(\mu-h^*) \ge 0.\label{eqin1}
\end{gather}
Relations \eqref{eqin} and \eqref{eqin1} imply
\[v'_{\mu}s\hat{q}[\mu,\mu-h^*] \le v'_{\mu}h^*(\textbf{1}-s) \le v_{\mu}'\mu(\textbf{1}-s).
\]
Using \eqref{ea22},  previous inequality,  and \eqref{eq20}, we derive
\[
v'_{{\mu}}(\mu-h^*)=v'_{{\mu}}\mu(\textbf{1}-s)+v'_{{\mu}}s\left(M_{{\mu}}(\mu-h^*)-\hat{q}[\mu,\mu-h^*]\right)\]
\[=v'_{{\mu}}\mu(\textbf{1}-s)+v'_{{\mu}}(\mu-h^*)(1+o_n(s)).\]
From those equations follows \eqref{eqin0}.

Using \eqref{eq16}, we have
\[\mu-h^*=\mu-sf(h^*)=\mu(\textbf{1}-s)+s\left(M_{\mu}-E_{h^*}\right)(\mu-h^*),\]
from which follows
\[v'_{\mu}(\mu-h^*)\le v'_{\mu}(\textbf{1}-s)+v'_{\mu}M_{\mu}(\mu-h^*)= v'_{\mu}(\textbf{1}-s)+\rho_{\mu}v'_{\mu}(\mu-h^*).\]
Thus,
\[v'_{\mu}(\mu-h^*) \le \frac{v'_{\mu}(\textbf{1}-s)}{1-\rho_{\mu}}.\]
Last inequality, part $(2)$ of Remark \ref{r1}, and Lemma \ref{l2} $(vii)$ prove \eqref{eqin000}.

Since 
\begin{equation}\label{eqin1252}
h^*\le t_n(s),\mu\le\textbf{1},
\end{equation}
 $\mu-h^*=o(s)$ by \eqref{eqin000}, and $\textbf{1}-\mu=o$ by \eqref{eq19}, we obtain \eqref{eqin2a}.

\end{proof}
For the rest of the paper, we will make an assumption the authors believe to be true:
\begin{equation}\label{eqin2}
v_{h^*}'(h^*-h_{n}(s)) = o_n(s).
\end{equation}
We note that for the compact subset $\mathcal{K}_{b_1,p} \subset \mathcal{K}, p,b_1>0$, where $p_{m}(\textbf{0})>p, b^k_{lm}>b_1, k,l,m \in \{1,...,d\}$, it is easy to show that \eqref{eqin2} holds.
\begin{lemma} \label{l8}
Let $\sigma \to 0$, then
\begin{equation}\label{eq18u}
h^*-h_{n}(s)=v'_{h^*}(h^*-h_{n}(s))u_{h^*}(\textbf{1}+o_n(s));
\end{equation}
\begin{equation}\label{eq18t}
t_{n}(s)-h^*=v'_{h^*}(t_{n}(s)-h^*)u_{h^*}(\textbf{1}+o_n(s));
\end{equation}
\begin{equation}\label{eq18m}
\mu-h^*=v'_{\mu}(\mu-h^*)u_{\mu}(\textbf{1}+o(s));
\end{equation}
\end{lemma}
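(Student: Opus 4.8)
\emph{Overall plan.} The three identities have a common shape --- a nonnegative vector equals its own $v$-projection times a Perron right eigenvector, up to a factor $\mathbf 1+o$ --- and I would prove them by one route. Each left-hand side arises from iterating, or from a fixed-point equation for, the map $h\mapsto sf(h)$, whose linearisation at $h^{*}$ is the nonnegative matrix $\widehat M:=\mathrm{diag}(s)\,M(h^{*})$ with entries $s_{k}M_{kl}(h^{*})$; by \eqref{eqin} the vectors in question are nonnegative and nonzero, and by \eqref{eqin0} and \eqref{eq19} one has $h^{*}\to\mathbf 1$ as well as $s\to\mathbf 1$ in the regime $\sigma\to 0$, so $\widehat M\to M(\mathbf 1)$ and $M(h^{*})\to M(\mathbf 1)$. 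The plan is: (a) write the left-hand side as a long product (for \eqref{eq18u}, \eqref{eq18t}) or a Neumann series (for \eqref{eq18m}) of nonnegative matrices that are asymptotically close to a fixed primitive matrix whose Perron root tends to $1$; (b) force the direction onto the corresponding Perron right eigenvector via Lemma~\ref{l1} (or the elementary fact that iterating a primitive matrix aligns any nonnegative vector with its Perron eigenvector); (c) replace the Perron data of $\widehat M$ (resp. of $M_{\mu}$) by $u_{h^{*}},v_{h^{*}}$ (resp. $u_{\mu},v_{\mu}$) at the cost of an $o(s)$ (resp. $o$) factor, using continuity of Perron data on the compact $C_{s_{0}}\times\mathcal K$ (simplicity from the spectral gap of Lemma~\ref{l2}), and clean up the normalising projection using the positivity of $v_{h^{*}}$ (resp. $v_{\mu}$) from Lemma~\ref{l2}. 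Throughout, eigenvectors are normalised so that $v_{s}'u_{s}=1$, as elsewhere in the paper, and uniformity over $\mathcal K$ is as in Remark~\ref{r3}.

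\emph{Proof of \eqref{eq18u}.} First I would expand, using \eqref{eq8}, \eqref{eq9} and \eqref{eq15} (expansion point $h^{*}$, second argument $h_{k}(s)$),
\[ h^{*}-h_{k+1}(s)=\mathrm{diag}(s)\bigl(M(h^{*})-E(h_{k}(s))\bigr)\bigl(h^{*}-h_{k}(s)\bigr), \]
where $0\le E(h_{k}(s))\le M(h^{*})$ and, by the Lipschitz bound \eqref{eq18} on $M(\cdot)$, $\|E(h_{k}(s))\|=O(\|h^{*}-h_{k}(s)\|)=O(1/k)$ uniformly over $f\in\mathcal K$ (by \eqref{eqin2} and item (vii) of Lemma~\ref{l2}). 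Iterating from $h_{0}(s)=\mathbf 0$ gives $h^{*}-h_{n}(s)=\prod_{k=0}^{n-1}(\widehat M-\widehat A_{k})\,h^{*}$ with $\widehat A_{k}=\mathrm{diag}(s)\,E(h_{k}(s))$, so $0\le\widehat A_{k}\le\widehat M$ and $\widehat A_{k}\le(C/k)R$, where $R$ is the rank-one Perron projection of $\widehat M$ (the lower bounds in Lemma~\ref{l2} turn a norm bound into such a majorisation). The matrices $M(h_{k}(s))-E(h_{k}(s))$ lie between $M(s_{0}p)$ and $M(\mathbf 1)$, hence are uniformly primitive with a uniform spectral gap, so $\widehat M/\widehat\rho_{s}$ (normalised by its Perron root $\widehat\rho_{s}<1$) has uniform power convergence (Theorem~\ref{t1}, cf.\ Lemma~\ref{l3}); together with $p_{n,k}\sim C/k$, conditions (i)--(ii) of Lemma~\ref{l1} hold. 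Lemma~\ref{l1} then yields $\bigl(h^{*}-h_{n}(s)\bigr)/\bigl(\widetilde v'(h^{*}-h_{n}(s))\bigr)=\widetilde u+o_{n}(s)$ with $\widetilde u,\widetilde v$ the Perron eigenvectors of $\widehat M$, and step (c) converts this into \eqref{eq18u}.

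\emph{Proof of \eqref{eq18t} and \eqref{eq18m}.} For \eqref{eq18t} the method is the same, now with $w_{k}:=t_{k}(s)-h^{*}\ge\mathbf 0$ (by \eqref{eqin}), $w_{0}=s-h^{*}$, and step matrices $M_{k}':=\mathrm{diag}(s)\bigl(M(h^{*})+\bar E_{k}\bigr)$ approaching $\widehat M$ from above, where $\bar E_{k}\ge 0$ and $\|\bar E_{k}\|=O(\|w_{k}\|)$; since the $M_{k}'$ also lie between $M(s_{0}p)$ and $M(\mathbf 1)$ (uniformly primitive) and converge to $\widehat M$, applying Lemma~\ref{l1} to $w_{n}=M_{n-1}'\cdots M_{0}'w_{0}$ with $P(k,\cdot)=M_{k-1}'$ and no extra perturbation (the rate \eqref{eqin2a} supplying uniformity) shows that $w_{n}/(v_{n}'w_{n})$ converges to $u_{n}$, where $u_{n},v_{n}$ are the Perron eigenvectors of $M_{n-1}'\to\widehat M$; step (c) then gives \eqref{eq18t}. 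For \eqref{eq18m} there is no iteration: from $h^{*}=sf(h^{*})$, $\mu=f(\mu)$ and \eqref{eq15} (expansion point $\mu$, second argument $h^{*}$),
\[ (I-\bar M)(\mu-h^{*})=(\mathbf 1-s)f(h^{*})=:\widetilde b\ge\mathbf 0,\qquad \bar M:=M_{\mu}-E(h^{*}), \]
and $M(s_{0}p)\le\bar M\le M_{\mu}\le M(\mathbf 1)$, so $\bar M$ is uniformly primitive with spectral radius $\bar\rho\le\rho_{\mu}<1$ (Lemma~\ref{l6}) and $\bar M\to M_{\mu}$ as $\sigma_{2}\to 0$ because $\|E(h^{*})\|=O(\|\mu-h^{*}\|)\to 0$ by \eqref{eq18} and \eqref{eqin0}. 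Hence $\mu-h^{*}=\sum_{k\ge 0}\bar M^{k}\widetilde b=\frac{\bar v'\widetilde b}{1-\bar\rho}\bar u+O(\|\widetilde b\|)$, with $\bar u,\bar v$ the Perron eigenvectors of $\bar M$ and $\sum_{k}\|\bar M^{k}-\bar\rho^{k}\bar u\bar v'\|=O(1)$ (uniform gap, cf.\ Lemma~\ref{l3}). Condition $D$ of \eqref{eq0}, the positivity of $\bar v$, and Remark~\ref{r1} give $\bar v'\widetilde b\asymp\|\mathbf 1-s\|\asymp\|\widetilde b\|$, while $1-\bar\rho\to 0$ (since $\rho_{\mu}\to 1$ by \eqref{eq20}); so the rank-one term dominates, $\mu-h^{*}=\frac{\bar v'\widetilde b}{1-\bar\rho}\bar u(\mathbf 1+o(s))$, and multiplying by $v_{\mu}'$ (and using $\bar u\to u_{\mu}$, $v_{\mu}'u_{\mu}=1$) identifies $\frac{\bar v'\widetilde b}{1-\bar\rho}$ with $v_{\mu}'(\mu-h^{*})(1+o(s))$ and $\bar u$ with $u_{\mu}(\mathbf 1+o(s))$, which is \eqref{eq18m}.

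\emph{Expected main obstacle.} The delicate point is uniformity over $\mathcal K$ in steps (a)--(b): one must know that the perturbations sitting near the end of the products --- equivalently, the small-power contributions to the Neumann series --- are negligible uniformly, and this is exactly what the rate estimates \eqref{eqin2}, \eqref{eqin2a} of Lemma~\ref{l7} and the linear-in-argument Taylor bound \eqref{eq18} are designed to provide, together with the uniform positivity and spectral gap of Lemma~\ref{l2} (used to convert norm bounds into majorisations by the rank-one projection and to verify the hypotheses of Lemma~\ref{l1}). A second, more structural point is that the genuine linearisation of $h\mapsto sf(h)$ at the fixed point is $\widehat M=\mathrm{diag}(s)M(h^{*})$ rather than $M(h^{*})$, so a priori the limiting direction is the Perron vector of $\widehat M$; the replacement by $u_{h^{*}}$ (resp. $u_{\mu}$) costs only the stated $o(s)$ (resp. $o$) factor precisely because $s\to\mathbf 1$ and $h^{*}\to\mathbf 1$ jointly as $\sigma\to 0$, so that $\widehat M$ and $M(h^{*})$ collapse to the same limit $M(\mathbf 1)$ --- a compactness argument that still has to be carried out in detail.
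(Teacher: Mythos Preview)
Your route for \eqref{eq18u} and \eqref{eq18t} is essentially the paper's: write the difference as a product of matrices $P-A_{k}$ close to the linearisation at $h^{*}$ and invoke Lemma~\ref{l1}. Two minor remarks. First, you are more careful than the paper in keeping track of the factor $\mathrm{diag}(s)$ (the paper silently commutes it through the product and writes $s^{n}\prod_{m}(M(h^{*})-E(h_{m}(s)))$, which is only correct up to $o(s)$); your step~(c) is exactly the clean way to absorb that discrepancy. Second, for \eqref{eq18t} your step matrices $M_{k}'$ lie \emph{above} $\widehat M$, so they are not literally of the form $P-A$ with $A\ge 0$ and fixed $P$ as Lemma~\ref{l1} requires; you would either take $P(n,s)=M_{n-1}'$ and $A(n,k,s)=M_{n-1}'-M_{k}'$ (checking nonnegativity via monotonicity of $t_{k}$), or appeal to the folklore extension of Lemma~\ref{l1} to products of matrices converging to a primitive limit. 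The paper only says ``similarly'', so this is not a divergence so much as a gap both proofs share.

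For \eqref{eq18m} you take a genuinely different route. The paper iterates the fixed-point identity $n$ times to produce a finite geometric-type sum $\sum_{k<n}B^{k}(n,s)\,s^{k}\mu(\mathbf 1-s)+B^{n}(n,s)s^{n}(\mu-h^{*})$, splits off the first $N$ terms, and then sandwiches the tail between $(1\pm\delta_{N}\pm Np_{n}(s))R(n)\,[\cdots]$ exactly as in the proof of Lemma~\ref{l1}; the argument is elementary but long. Your Neumann-series approach is cleaner: one line gives $\mu-h^{*}=(I-\bar M)^{-1}\widetilde b$, and the spectral decomposition isolates $\dfrac{\bar v'\widetilde b}{1-\bar\rho}\bar u$ as the dominant part. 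The only thing to watch is the bound $\sum_{k\ge 0}\|\bar M^{k}-\bar\rho^{k}\bar u\bar v'\|=O(1)$ uniformly over $\mathcal K$ and $s\in C_{s_{0}}$: Lemma~\ref{l3} as stated gives only a null sequence $\delta_{k}$, not summability, so you must invoke the uniform spectral gap of Lemma~\ref{l2}(ii) directly (which yields a geometric rate $((1-\eta)\bar\rho)^{k}$ and hence the $O(1)$ sum). With that in hand your argument goes through; what it buys is brevity and transparency, while the paper's truncation argument avoids the need for an explicit geometric rate by reusing the Lemma~\ref{l1} machinery verbatim.
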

\begin{proof}[Proof]
Choose sequence $f^n\in\mathcal{K},$ for which $\rho(f^n)\to1$. Denote $\rho_{h^*}=\rho_{h^*}(f^n)=\rho_{h^*}(n,s).$
Let $A(n,m,s)=\rho^{-1}_{h^*(n,s)}E(h_m(s))$, $P(n,s)=\rho^{-1}_{h^*(n,s)}M(h^*)$.

Repeatedly using expansion \eqref{eq15}, we obtain
\[h^*-h_{n}(s)=s(f(h^*)-f(h_{n-1}(s)))=s\left(M(h^*)-E(h_{n-1}(s))\right)(h^*-h_{n-1}(s))\]
\[=\ldots=\prod_{m=1}^{n-1}s\left(M(h^*)-E(h_m(s))\right)(h^*-h_0(s))=\rho^{n}_{h^*(n,s)}\prod_{m=1}^{n-1}s\left(P(n,s)-A(n,m,s))\right)h^*.\]
Let $s_{min}=\min_{k\in\{1,...,d\}}s_k,$ $s_{max}=\max_{k\in\{1,...,d\}}s_k,$. Clearly,
\begin{gather}
\nonumber  s^n_{min}\rho^{n}_{h^*(n,s)}\prod_{m=1}^{n-1}\left(P(n,s)-A(n,m,s))\right)h^*\le \rho^{n}_{h^*(n,s)}\prod_{m=1}^{n-1}s\left(P(n,s)-A(n,m,s))\right)h^* \\
\nonumber \le s^n_{max}\rho^{n}_{h^*(n,s)}\prod_{m=1}^{n-1}\left(P(n,s)-A(n,m,s))\right)h^*.
\end{gather}
From Remark \ref{r1}, it is clear that $s^n_{min}=s^n_{max}=1+o_n(s)$. Thus, the previous inequalities give
 \begin{equation}\label{eq18mms}
h^*-h_{n}(s)=\rho^{n}_{h^*(n,s)}\prod_{m=1}^{n-1}\left(P(n,s)-A(n,m,s))\right)h^*(\textbf{1}+o_n(s)).
\end{equation}
By Lemma \ref{l3}, $P(n,s)$ satisfy condition $(i)$ of Lemma \ref{l1}.

Then the monotonicity of $h_m(s)$ gives
\begin{equation}\label{eq18a1}
m_1<m_2 \implies A(n,m_2,s)\le A(n,m_1,s).
\end{equation}
Define $p_{n,m}(s)=\eta^{-2}\max_{l,j}A_{lj}(n,m,s)$. For fixed $m$, $A(n,n-m,s)$ tends to zero by  \eqref{eqin2} and \eqref{eq15A}, hence $p_{n,m}(s)$ also tends to zero. Also, from \eqref{eq18a1}, $p_{n,m}(s)$ is non-increasing in $m$ for fixed $n$ and $s$. Lemma \ref{l2} $(vi)$ and $(vii)$ imply that 
\[A(n,m,s) \le p_{n,m}(s)R(n,s),\]
which means that condition $(ii)$ of Lemma \ref{l1} is satisfied, and the following convergence is true
\[\prod_{m=1}^{n-1}\left(P(n,s)-A(n,m,s))\right)h^*/v'_{h^*}\prod_{m=1}^{n-1}\left(P(n,s)-A(n,m,s))\right)h^*= u_{h^*}(\textbf{1}+o_n(s)).\]

Since sequence $f^n$ is arbitrary, the last result and \eqref{eq18mms} prove \eqref{eq18u}.

Now, let's proceed to prove \eqref{eq18m}. Choose sequence $f^n\in\mathcal{K},$ for which $\rho(f^n)\to1$. Denote $\mu(f^n)=\mu_n, h^*(f^n)=h^*_n$.
Since $\mu=f(\mu)$ and $h^*=sf(h^*)$, using \eqref{eq15} repeatedly, we derive
\begin{gather}\label{eq18a2}
\mu_n-h^*_n=\mu_n(\textbf{1}-s)+s(M_{{\mu}_n}-E(h^*_n))(\mu_n-h^*_n)\\
=\ldots
\nonumber =\sum_{k=0}^{n-1}\left(s(M_{{\mu}_n}-E(h^*_n))\right)^k\mu_n(\textbf{1}-s)+\left(s(M_{{\mu}_n}-E(h^*_n))\right)^n(\mu_n-h^*_n).
\end{gather}
Similarly to \eqref{eq18mms}, it is clear that
\begin{gather}
\nonumber  \sum_{k=0}^{n-1}s^k_{min}\left(M_{{\mu}_n}-E(h^*_n)\right)^k(\mu_n-h^*_n)+s^n_{min}\left(M_{{\mu}_n}-E(h^*_n)\right)^n(\mu_n-h^*_n)\le \mu_n-h^*_n \\
\nonumber \le \sum_{k=0}^{n-1}s^k_{max}\left(M_{{\mu}_n}-E(h^*_n)\right)^k\mu_n(\textbf{1}-s)+s^n_{max}\left(M_{{\mu}_n}-E(h^*_n)\right)^n(\mu_n-h^*_n).
\end{gather}
From Remark \ref{r1}, we see that $s^k_{min}=s^k_{max}=1+o_n(s), k \in \{0,1,..,n\}$, so the previous inequality give
 \begin{gather}
 \mu_n-h^*_n =\Sigma(\textbf{1}+o_n(s)) \label{eq18mmds}\\
\nonumber =\left(\sum_{k=0}^{n-1}\left(M_{{\mu}_n}-E(h^*_n))\right)^k\mu_n(\textbf{1}-s)+\left((M_{{\mu}_n}-E(h^*_n))\right)^n(\mu_n-h^*_n)\right)(\textbf{1}+o_n(s)).
\end{gather}
Let $P(n)=\rho^{-1}_{{\mu}_n}M_{{\mu}_n}$, $A(n,m,s)=A(n,s)=\rho^{-1}_{\mu_n}E(h^*_n), B(n,s)=P(n)-A(n,s)$.

Define $p_{n}(s)=\eta^{-2}\max_{l,j}A_{lj}(n,s)$. It is null by \eqref{eq15A} and \eqref{eqin000}. Lemma \ref{l2} $(vi)$ and $(vii)$ show that 
\begin{equation}\label{eq18uo}
A(n,s) \le p_n(s)R(n).
\end{equation}

We can rewrite the expression for $\Sigma$ as follows
\begin{gather}
 \nonumber \Sigma=\sum_{k=0}^NB^k(n,s)\rho^{k}_{\mu}\mu_n(\textbf{1}-s) \label{eq18b11b}\\
+\sum_{k=N+1}^{n-1}B^k(n,s)\rho^{k}_{\mu_n}\mu_n(\textbf{1}-s)+B^n(n,s)\rho^{n}_{\mu_n}(\mu_n-h^*_n)=\Sigma_1+\Sigma_2, \label{eq18u1}
\end{gather}
where $\Sigma_2=\sum_{k=N+1}^{n-1}B^k(n,s)\rho^{k}_{\mu_n}\mu_n(\textbf{1}-s)+B^n(n,s)\rho^{n}_{\mu_n}(\mu_n-h^*_n).$

Lemma \ref{l3} guarantees the existence of a null sequence $\delta_m$, such that 
\[(1-\delta_m)R(n)\le P^m (n)\le (1+\delta_m)R(n), \]
$R(n)=u_{{\mu}_n}v'_{{\mu}_n}$. Therefore, for $k>N$ 
\begin{equation}\label{eq19d1}
\prod_{j=k-N+1}^{k}B(n,s)\le P^N(n)  \le  R(n)(1+\delta_N).
\end{equation}
Inequality \eqref{eq18uo} and the fact that $PR=RP=R$ shows that
\begin{gather}
 \nonumber \prod_{j=k-N+1}^{k}B(n,s)\ge \prod_{j=k-N+1}^{k}(P(n)-p_n(s) R(n))\\
\ge P^N(n)-Np_n(s) R(n)\ge (1-\delta_N-Np_n(s))R(n). \label{eq19d2}
\end{gather}
Inequalities \eqref{eq19d1} and \eqref{eq19d2}  show 
\begin{gather}
\nonumber (1-\delta_N-Np_n(s))R(n)\left(\sum_{k=N+1}^{n-1}B^{k-N}(n,s)\rho^{k}_{\mu_n}\mu_n(\textbf{1}-s)+B^{n-N}(n,s)\rho^{n}_{\mu_n}(\mu_n-h^*_n)\right)\\
\le \Sigma_2 \le (1+\delta_N)R(n)\left(\sum_{k=N+1}^{n-1}B^{k-N}(n,s)\rho^{k}_{\mu_n}\mu_n(\textbf{1}-s)+B^{n-N}(n,s)\rho^{n}_{\mu_n}(\mu_n-h^*_n)\right).\label{eq19d3}
\end{gather}
The same inequalities hold for $v'_{\mu_n}\Sigma_2$.
Relations \eqref{eqin0} and \eqref{eq18mmds} show that
\begin{equation}\label{eq19d33}
\frac{v'_{\mu_n}\mu_n(\textbf{1}-s)}{v'_{\mu_n}\Sigma} =o_n(s).
\end{equation}
It is easy to see that for fixed $N$, we can find positive constants $c^1_N$ and $c^2_N$, such that
\begin{equation}\label{eq19si}
 c^1_N\min_{k\in \{1,2,...,d\}}(1-s_k) \le \Sigma_1 \le c^2_N\max_{k\in \{1,2,...,d\}}(1-s_k).
\end{equation}
Then, \eqref{eq19si}, \eqref{eq19d33}, \eqref{eq18u1}, and part $(1)$ of Remark \ref{r1} show that both
\begin{equation}\label{eq19sig}
\frac{\Sigma_1}{v'_{\mu_n}\Sigma_2}= o_n(s),\frac{v'_{\mu_n}\Sigma_1}{v'_{\mu_n}\Sigma_2} =o_n(s).
\end{equation}

Dividing equation \eqref{eq18u1} by itself multiplied by $v'_{\mu_n}$, we get
\[ \frac{\Sigma}{v'_{\mu_n}\Sigma}=\frac{\Sigma_1/v'_{\mu_n}\Sigma_2+\Sigma_2/v'_{\mu_n}\Sigma_2}{1+v'_{\mu_n}\Sigma_1/v'_{\mu_n}\Sigma_2}.\]
Note that $\frac{R_{\mu_n}x}{v'_{\mu_n}R_{\mu_n}x} = u_{\mu_n}$ regardless of the vector $x$, due to the normalization $v'_{\mu_n}u_{\mu_n}=1$. Therefore, using \eqref{eq19d3} and the previous equation, we have 
\[\frac{(1-\delta_N-Np_n(s))u_{\mu_n}}{(1+\delta_N)(1+ v'_{\mu_n}\Sigma_1/v'_{\mu_n}\Sigma_2)}\le \frac{\Sigma}{v'_{\mu}\Sigma}
\le \frac{(1+\delta_N)u_{\mu_n}}{(1-\delta_N-Np_n(s))}+\frac{\Sigma_1}{v'_{\mu_n}\Sigma_2}.
\]
Inequalities $(vi)$ and $(vii)$ from Lemma \ref{l2} give the following
\[\frac{\Sigma_1}{v'_{\mu_n}\Sigma_2} \le \frac{v'_{\mu_n}\Sigma_1}{\eta^2 v'_{\mu_n}\Sigma_2} u_{\mu_n}.\]
Thus, 
\[\Big\lvert \left[ \frac{\Sigma}{v'_{\mu}\Sigma}-u_{\mu_n} \right]_k \Big\rvert \le  \frac{2\delta_N+Np_n(s)+(1+\delta_N)v'_{\mu_n}\Sigma_1/\eta^2 v'_{\mu_n}\Sigma_2}{1-\delta_N-Np_n(s)}.\]
Fix $\epsilon >0$. Choose $N$ large enough, so that $\delta_N<\frac{\epsilon}{6}$. Then, we can select $n^1_{\epsilon}$ and $s^1_{\epsilon}$, so that $Np_n(s)<\frac{\epsilon}{6}$ for all $n>n_{\epsilon}$ and all $s>s^1_{\epsilon}$. Furthermore, based on \eqref{eq19sig}, we can find $n^2_{\epsilon}>n^1_{\epsilon}$ and $s^2_{\epsilon}>s^1_{\epsilon}$, so that $(1+\delta_N)v'_{\mu_n}\Sigma_1/\eta^2 v'_{\mu_n}\Sigma_2$ is less than $\frac{\epsilon}{6}$ for all $n>n^2_{\epsilon}$ and $s>s^2_{\epsilon}$. Consequently,
\[\Big\lvert \frac{\Sigma}{v'_{\mu}\Sigma}-u_{\mu_n} \Big\rvert < \frac{2\epsilon}{3-\epsilon}  < \epsilon, \epsilon<1.\]
Since sequence $f^n$ is arbitrary, the last relation, combined with \eqref{eq18mmds} gives \eqref{eq18m}.

The proof of \eqref{eq18t} follows a similar approach to that of \eqref{eq18u}, and thus, we will not repeat it.
\end{proof}

Define 
\[
a=a(s)=\rho_{h^*}v_{h^*}'su_{h^*}, Q_{h^*}=v_{h^*}'sq[h^*,u_{h^*}].\]
Additionally, let 
\[ R_1=R_1(s,n)=Q_{h^*}(1-a^n), R_2=R_2(s,n)=\frac{1-a}{s-h^*}, R=\frac{R_1}{R_2}.\]
\begin{lemma}\label{l9}
 Let $\sigma \to 0$, then
\begin{equation}\label{eq26a}
h^*-h_{n}(s)=\frac{(1-a)a^n}{R_1}u_{h^*}(\textbf{1}+o_n(s));
\end{equation}
\begin{equation}\label{eq26}
 t_{n}(s)-h_{n}(s)=\frac{(1-a)a^n\left(1+(1+R)o_n(s)\right)}{R_1(1-R(1+o_n(s)))}u_{h^*}(\textbf{1}+o_n(s)).
\end{equation}
\begin{equation}\label{eq26b}
g_{n}(s)=\frac{Q_{h^*}(1-a)^2a^n}{R^2_1}u_{h^*}(\textbf{1}+o_n(s));
\end{equation}
\end{lemma}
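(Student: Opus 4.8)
The plan is to project every vector recursion onto the left Perron eigenvector $v_{h^*}$, reducing all three statements to the asymptotics of scalar Riccati-type recursions. For \eqref{eq26a}, Lemma~\ref{l8} lets me write $h^*-h_n(s)=x_nu_{h^*}(\textbf{1}+o_n(s))$ with $x_n:=v'_{h^*}(h^*-h_n(s))$, so only $x_n$ must be found. Subtracting the fixed point equation $h^*=sf(h^*)$ from $h_{n+1}(s)=sf(h_n(s))$ and expanding $f$ at $h^*$ by \eqref{eq16} gives $h^*-h_{n+1}(s)=s\,(M(h^*)(h^*-h_n(s))-q[h^*,h^*-h_n(s)]+e_{h_n}[h^*,h^*-h_n(s)])$; contracting with $v'_{h^*}$, substituting the Lemma~\ref{l8} representation, collecting the linear and quadratic contributions into the constants $a$ and $Q_{h^*}$ (as defined above), and using $0\le e_{h_n}[\cdot]\le q[\cdot]$ with $e_{h_n}[h^*,\cdot]\to\textbf{0}$ together with the lower bounds of Lemma~\ref{l2} and Remark~\ref{r1}, I get $x_{n+1}=ax_n-Q_{h^*}x_n^2(1+o_n(s))$. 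Passing to $y_n=1/x_n$ turns this into $y_{n+1}=a^{-1}y_n(1+o_n(s))+a^{-2}Q_{h^*}(1+o_n(s))$; iterating, summing the almost geometric series, and invoking Lemma~\ref{l4} to replace the family $\{o_k(s)\}$ by one $o_n(s)$ in each sum yields $y_n=a^{-n}(y_0+Q_{h^*}(1-a^n)(a(1-a))^{-1})(1+o_n(s))$. The hypotheses (in particular $\textbf{1}-s=O((1-\rho)/n)$) force, via \eqref{eq20a}, \eqref{eqin0}, a Lemma~\ref{l5}-type expansion of $\rho_{h^*}$ near $\textbf{1}$ and $a\to1$, that $1-a\to0$ while $Q_{h^*}(1-a^n)(1-a)^{-1}\to\infty$, so $y_0=1/(v'_{h^*}h^*)$ is negligible; hence $x_n=(1-a)a^nR_1^{-1}(1+o_n(s))$ and \eqref{eq26a} follows from Lemma~\ref{l8}.

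For \eqref{eq26} I write $t_n(s)-h_n(s)=(t_n(s)-h^*)+(h^*-h_n(s))=(\xi_n+x_n)u_{h^*}(\textbf{1}+o_n(s))$ with $\xi_n:=v'_{h^*}(t_n(s)-h^*)$, again by Lemma~\ref{l8}. The same computation based at $t_n$ — where the extra term $M(t_n)-M(h^*)$, expanded via \eqref{eq18a}, reverses the sign of the quadratic part — gives $\xi_{n+1}=a\xi_n+Q_{h^*}\xi_n^2(1+o_n(s))$; monotonicity of $t_n(s)$ together with \eqref{eqin0} and the hypotheses make $\xi_0=v'_{h^*}(s-h^*)=O(1-a)$, so the quadratic term is a genuine lower order correction, and the reciprocal argument now produces $\xi_n=a^n(1-a)R_2^{-1}(1-R/a)^{-1}(1+o_n(s))$. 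Adding $x_n$ from \eqref{eq26a} and simplifying gives \eqref{eq26}, the factors $1+(1+R)o_n(s)$ and $1-R(1+o_n(s))$ merely recording how the $o$'s propagate through the division when $R$ is close to $1$.

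For \eqref{eq26b} the identity $g_n(s)=(h^*-h_n(s))-(h^*-h_{n+1}(s))$ does not suffice by itself: the leading term of $g_n$ is of second order in $1-a$ and is swamped by the $o_n(s)$-errors on subtraction (the map $w\mapsto w-s\,M(h^*)w$ sends $u_{h^*}$ to a vector of size $O(1-a)$). I therefore first fix the direction of $g_n$. From $g_n(s)=s(M(h_n(s))-E(h_{n-1}(s)))g_{n-1}(s)$ one has $g_n(s)=\bigl(\prod_{m=1}^ns(M(h_m(s))-E(h_{m-1}(s)))\bigr)g_0(s)$; applying Lemma~\ref{l1} with $P=\rho_{h^*}^{-1}M(h^*)$ (a majorant by monotonicity of $M$), whose hypotheses hold by Lemma~\ref{l3}, $M(h_m)\uparrow M(h^*)$, $E(h_{m-1})\to\textbf{0}$, \eqref{eq15A} and Lemma~\ref{l2}, gives $g_n(s)=\gamma_nu_{h^*}(\textbf{1}+o_n(s))$ with $\gamma_n=v'_{h^*}g_n(s)$. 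Now $v'_{h^*}$ is linear, so $\gamma_n=x_n-x_{n+1}$ exactly; feeding in $x_{n+1}=ax_n-Q_{h^*}x_n^2(1+o_n(s))$ — whose linear part $ax_n$ carries no error — gives the clean identity $\gamma_n=(1-a)x_n+Q_{h^*}x_n^2(1+o_n(s))=(1-a)x_n(1+Q_{h^*}x_n(1-a)^{-1}(1+o_n(s)))$, and substituting $x_n=(1-a)a^nR_1^{-1}(1+o_n(s))$ from \eqref{eq26a} collapses the bracket to $(1-a^n)^{-1}(1+o_n(s))$, so $\gamma_n=Q_{h^*}(1-a)^2a^nR_1^{-2}(1+o_n(s))$, which with the direction result is \eqref{eq26b}.

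The step I expect to be the main obstacle is the error bookkeeping in the Riccati recursions: showing that the position dependent $o_k(s)$'s produced at every step of the iteration, together with the transient discrepancy of $M(h_k)$ and $E(h_{k-1})$ from their limits for small $k$, can all be absorbed into a single $o_n(s)$ in the final formulas. This is exactly where Lemma~\ref{l4} and the strong smallness hypotheses $\textbf{1}-s=O((1-\rho)/n)$ and $\textbf{1}-s=O(1-a)$ — which fix the scales of $1-a$, $\textbf{1}-h^*$ and $\rho-\rho_{h^*}$ and make $n\lvert\textbf{1}-s\rvert^2\to0$ — have to be used with care; a secondary subtlety is that \eqref{eq26b} cannot be obtained by differencing \eqref{eq26a} and must go through the separate direction argument plus the exact relation $\gamma_n=x_n-x_{n+1}$.
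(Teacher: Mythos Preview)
Your overall architecture matches the paper exactly: project onto $v'_{h^*}$, use Lemma~\ref{l8} for the direction, reduce to a scalar Riccati recursion, solve by reciprocals with Lemma~\ref{l4}, and for \eqref{eq26b} first fix the direction of $g_n$ via Lemma~\ref{l1} and then compute $\gamma_n=x_n-x_{n+1}$. The paper does precisely this.

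There is, however, one genuine gap. You assert the scalar recursion $x_{n+1}=ax_n-Q_{h^*}x_n^2(1+o_n(s))$ with \emph{no error on the linear coefficient}, and later lean on this when you write ``whose linear part $ax_n$ carries no error''. That is not what comes out of the contraction. After multiplying $h^*-h_{n+1}=s\bigl(M(h^*)(h^*-h_n)-q[h^*,h^*-h_n]+e_{h_n}[\cdot]\bigr)$ by $v'_{h^*}$, the linear term is $\rho_{h^*}(v_{h^*}s)'(h^*-h_n)$, not $a\,v'_{h^*}(h^*-h_n)$; substituting Lemma~\ref{l8} leaves a residual factor $b_n(s)$ with
\[
b_n(s)-1=O(|\textbf{1}-s|)\cdot o_n(s),
\]
so the true recursion is $x_{n+1}=ab_n(s)x_n-Q_{h^*}x_n^2(1+o_n(s))$. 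This extra $|\textbf{1}-s|$ in front of the $o_n(s)$ is not cosmetic: it is exactly what makes the product $\prod_{k\le n}b_k(s)$ collapse to $1+o$ under the \emph{first} hypothesis $1-s_k=O((1-\rho)/n)$ (so that $n|\textbf{1}-s|\to 0$); with a bare $o_n(s)$ on the linear term the iterated product need not converge and your reciprocal argument breaks.

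The same omission bites in \eqref{eq26b}. The exact identity $\gamma_n=x_n-x_{n+1}$ gives
\[
\gamma_n=(1-a)x_n+a(1-b_n(s))x_n+Q_{h^*}x_n^2(1+o_n(s)),
\]
and the middle term does not vanish automatically. The paper uses the \emph{second} hypothesis $1-s_k=O(1-a)$ to bound it by $\frac{(1-a)^2a^{n+1}}{R_1}o_n(s)$, which is then absorbed into the main term. Your computation of $\gamma_n$ is correct once this extra piece is disposed of, but as written the second hypothesis is never invoked and the step is unjustified. In short: the plan is right, but the mechanism by which each of the two hypotheses enters --- controlling $\prod b_k$ for \eqref{eq26a}/\eqref{eq26} and the stray $a(1-b_n)x_n$ for \eqref{eq26b} --- is the crux, and it is precisely what your sketch skips.
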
 
\begin{proof}[Proof]
Relations \eqref{eq16}, \eqref{eq16a}, and \eqref{eqin2} give
\begin{equation}\label{eq27}
h^*-h_{n+1}(s)=sM_{h^*}(h^*-h_{n}(s))-(v'_{h^*}(h^*-h_{n}(s)))^2sq[h^*,u_{h^*}](1+o_n(s)).
\end{equation}
After multiplying \eqref{eq27} from the left-hand side by $v'_{h^*}$,  we obtain
\begin{equation}\label{eq27a}
v'_{h^*}(h^*-h_{n+1}(s))=\rho_{h^*}v'_{h^*}(h^*-h_{n}(s))\frac{v'_{h^*}sM_{h^*}(h^*-h_{n}(s))}{\rho_{h^*}v'_{h^*}(h^*-h_{n}(s))}-(v'_{h^*}(h^*-h_{n}(s)))^2Q_{h^*}(1+o_n(s)).
\end{equation}
Bearing in mind the normalization $v'_{h^*}u_{h^*}=1$ and using \eqref{eq18u} in the numerator and the denominator, we get
\begin{gather}
\nonumber \rho_{h^*}\frac{v'_{h^*}sM_{h^*}(h^*-h_{n}(s))}{\rho_{h^*}v'_{h^*}(h^*-h_{n}(s))}=a\left(1-\left(1-\frac{(v_{h^*}s)'u_{h^*}(\textbf{1}+o_n(s))}{(v_{h^*}s)'u_{h^*}v'_{h^*}u_{h^*}(\textbf{1}+o_n(s))}\right)\right)\\
\nonumber  =a\left(1-\frac{((v_{h^*}s)'u_{h^*}v'_{h^*}-v'_{h^*}+v'_{h^*}-(v_{h^*}s)')u_{h^*}o_n(s)}{(v_{h^*}s)'u_{h^*}v'_{h^*}u_{h^*}(\textbf{1}+o_n(s))}\right)\\  
=a\left(1-\frac{((v_{h^*}(\textbf{1}-s))'-(v_{h^*}(\textbf{1}-s))'u_{h^*}v'_{h^*})u_{h^*}o_n(s)}{(v_{h^*}s)'u_{h^*}v'_{h^*}u_{h^*}(\textbf{1}+o_n(s))}\right). \label{eq27b}
\end{gather}
Let $x_n=v'_{h^*}(h^*-h_{n}(s))$, $b_n(s)=1-\frac{((v_{h^*}(\textbf{1}-s))'-(v_{h^*}(\textbf{1}-s))'u_{h^*}v'_{h^*})u_{h^*}o_n(s)}{(v_{h^*}s)'u_{h^*}v'_{h^*}u_{h^*}(\textbf{1}+o_n(s))}$. Then, using the previous equation and \eqref{eq27a}, we obtain the equation
\begin{equation}\label{eq29}
x_{n+1}=ab_n(s)x_n-Q_{h^*}x^2_n(1+o_n(s)).
\end{equation}
The same reasoning applied to $t_{n}(s)-h^*$ (using \eqref{eq16}, \eqref{eq16a}, \eqref{eqin2a}, and \eqref{eq18t}), gives the equation
\begin{equation}\label{eq30}
x_{n+1}=ab_n(s)x_n+Q_{h^*}x^2_n(1+o_n(s)),
\end{equation}
where $x_{n}=v'_{h^*}(t_{n}(s)-h^*).$
Note that 
\begin{equation}\label{eq31}
a=1+o(s), Q_{h^*}=Q(h^*)+o(s).
\end{equation}
Now proceed as in \cite[ p.~442]{k4}.
Let $y_n=1/x_n$. Then \eqref {eq29} and \eqref {eq30}  give
\begin{equation}\label{eq32}
y_{n}=ab_n(s)y_{n+1} \mp Q_{h^*}\frac{x_n}{x_{n+1}}(1+o_n(s)).
\end{equation}
The second part of Remark \ref{r1} and Lemma \ref{r2} $ (v)$ show that
\begin{equation}\label{eq32b}
\prod_{l=1}^kb_l(s)=1+o, k \in \{1,...,n\}.
\end{equation}
Inequality \eqref{eqin2} shows that $\frac{x^2_n}{x_{n+1}}=o_n.$ Thus, equations \eqref {eq29}-\eqref {eq31} and \eqref {eq32b} demonstrate that
\[\frac{x_n}{x_{n+1}}=\frac{1+o_n(s)}{ab_n(s)}=1+o_n(s),\]
so we can rewrite \eqref{eq32} as
\[y_{n+1}=a^{-1}b^{-1}_n(s)y_{n}\pm Q_{h^*}(1+o_n(s)).\]
Iteration of this equation yields 
\[y_{n+1}=\pm Q_{h^*}a^{-n} \sum_{k=0}^na^{k}\prod_{l=0}^kb^{-1}_{n-l}(1+o_k(s))
+a^{-n-1}\prod_{l=0}^{n}b^{-1}_{n-l}(s)y_0.
\]
Hence, by \eqref {eq32b} and Lemma \ref{l4}, we obtain
\[y_{n+1}=\pm Q_{h^*}a^{-n} \sum_{k=0}^na^{k}(1+o_n(s))+a^{-n-1}y_0(1+o),\]
and consequently
\begin{equation}\label{eq32a1}
x_{n}=\left(a^{-n}x_0^{-1}\pm Q_{h^*}\frac{(a^{-n}-1)}{(a-1)}\right)^{-1}(1+o_n(s)).
\end{equation}
For equation \eqref{eq29} $x_0=h^*$, and for equation \eqref{eq30} $x_0=s-h^*$. Using this and rearranging \eqref{eq32a1} 
we get that 
\begin{equation}\label{eq33}
v'_{h^*}(h^*-h_{n}(s))=\frac{(1-a)a^n}{R_1}(1+o_n(s)), \text{ } v'_{h^*}(t_{n}(s)-h^*)=\frac{(1-a)a^n}{R_2(1-R(1+o_n(s)))}.
\end{equation}
These relations, along with \eqref{eq18u} and \eqref{eq18t}, prove \eqref{eq26a} and \eqref{eq26}.

Using \eqref{eq8a}, similarly to \eqref{eq18u}, one can show that
\begin{equation}\label{eq33a}
g_n(s) = v'_{h^*}(h_{n+1}(s)-h_{n}(s))u_{h^*}(\textbf{1}+o_n(s)).
\end{equation}
Also, using \eqref{eq8a}, \eqref{eq27a}, and \eqref{eq27b} we obtain 
\begin{equation}\label{eq33b}
v'_{u^*}(h_{n+1}(s)-h_{n}(s)) = (1-a+a(1-b_n(s)))v'_{h^*}({h^*}-h_{n}(s))+Q_{h^*}(v'_{h^*}({h^*}-h_{n}(s)))^2(1+o_n(s)).
\end{equation}
Relation \eqref{eq26a} yields
\[(1-a)v'_{h^*}({h^*}-h_{n}(s))+Q_{h^*}(v'_{h^*}({h^*}-h_{n}(s)))^2(1+o_n(s))=\frac{Q_{h^*}(1-a)^2a^n}{R^2_1}(1+o_n(s)+a^no_n(s)).\]
Note that $1-a=1-\rho_{h^*}+\rho_{h^*}(1-v'_{h^*}su_{h^*})=1-\rho_{h^*}+\rho_{h^*}(v'_{h^*}(\textbf{1}-s)u_{h^*})$, so that $v'_{h^*}(\textbf{1}-s)u_{h^*}$ has a rate of at least the same order as $1-a$.
This, along with \eqref{eq26a}, implies
\[ a(1-b_n(s))v'_{h^*}({h^*}-h_{n}(s))=\frac{(1-a)^2a^{n+1}}{R_1}o_n(s).\]
Since $a<1$  by \eqref{eq20a}, the last two equations and \eqref{eq33b} give
\[v'_{h^*}(h_{n+1}(s)-h_{n}(s)) = \frac{Q_{h^*}(1-a)^2a^n}{R^2_1}(1+o_n(s)),\]
which, combined with \eqref{eq33a}, yields \eqref{eq26b}.
\end{proof}

Define
\[W=W(s,\rho_{\mu})=\frac{4Q(\mu)v'_{\mu}(\textbf{1}-s)}{(1-\rho_{\mu})^2}, V=V(s,\rho_{\mu})=\sqrt{1+W}.\]
\begin{lemma} \label{l10}
 Let $\sigma_2 \to 0$, then
\begin{equation}\label{eq35}
\mu-h^*=\frac{(1-\rho_{\mu})(V-1)}{2Q(\mu)}u_{\mu}(\textbf{1}+o(s)).
\end{equation}
\end{lemma}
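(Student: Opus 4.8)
The idea is to apply Lemma~\ref{l8}, specifically equation~\eqref{eq18m}, which already tells us that $\mu - h^* = v_\mu'(\mu - h^*)\, u_\mu(\mathbf{1} + o(s))$, so the whole problem reduces to finding the scalar $x := v_\mu'(\mu - h^*)$ and showing that $x = \frac{(1-\rho_\mu)(V-1)}{2Q(\mu)}(1 + o(s))$. To get an equation for $x$, I would start from the fixed-point relations $h^* = s f(h^*)$ and $\mu = f(\mu)$, subtract them, and apply the second-order Taylor expansion~\eqref{eq16}--\eqref{eq16a} of $f$ around $\mu$:
\[
\mu - h^* = \mu(\mathbf{1}-s) + s\bigl(M_\mu(\mu-h^*) - q[\mu,\mu-h^*] + e_{h^*}[\mu,\mu-h^*]\bigr).
\]
Multiplying on the left by $v_\mu'$ and using $v_\mu' M_\mu = \rho_\mu v_\mu'$ gives, after collecting terms, a relation of the form
\[
(1-\rho_\mu)\,v_\mu'(\mu-h^*) = v_\mu'\mu(\mathbf{1}-s) + (\rho_\mu - 1)\,v_\mu'(\mathbf{1}-s)\,(\mu-h^*)\cdot(\text{bounded}) - v_\mu' s\, q[\mu,\mu-h^*](1+o(s)),
\]
where the middle term comes from replacing $v_\mu' s$ by $v_\mu' - v_\mu'(\mathbf 1 - s)$ (it is lower order by \eqref{eqin0} and will be absorbed into $o(s)$ after division).

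The key quadratic step is to rewrite $v_\mu' s\, q[\mu,\mu-h^*]$ using the directional collapse~\eqref{eq18m}: since $\mu - h^* = x\, u_\mu(\mathbf 1 + o(s))$, bilinearity of $q_k[\mu,\cdot]$ in its second argument gives $v_\mu' s\, q[\mu,\mu-h^*] = x^2\, v_\mu' s\, q[\mu,u_\mu](1 + o(s))$, and by Remark~\ref{r1} together with $s \to \mathbf 1$ this equals $x^2 Q(\mu)(1+o(s))$. Likewise $v_\mu'\mu(\mathbf 1 - s) = v_\mu'(\mathbf 1 - s)(1 + o(s))$. So, writing $w := v_\mu'(\mathbf 1 - s)$, the relation becomes the scalar quadratic
\[
Q(\mu)\, x^2 - (1-\rho_\mu)\, x + w(1 + o(s)) = 0,
\]
whose relevant (positive, smaller) root is $x = \dfrac{(1-\rho_\mu) - \sqrt{(1-\rho_\mu)^2 - 4 Q(\mu) w}}{2 Q(\mu)}(1+o(s))$. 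Recognizing $W = \dfrac{4 Q(\mu)\, w}{(1-\rho_\mu)^2}$ and $V = \sqrt{1+W}$, and being careful with the sign of $1-\rho_\mu$ (if $\rho_\mu > 1$ the roles swap but the same final formula results, since $\mu - h^* \geq \mathbf 0$ forces the correct branch), we get $x = \dfrac{|1-\rho_\mu|(V-1)}{2Q(\mu)}(1+o(s)) = \dfrac{(1-\rho_\mu)(V-1)}{2Q(\mu)}(1+o(s))$ after reconciling signs via $(1-\rho_\mu)^2 - 4Q(\mu)w = (1-\rho_\mu)^2(1-W)$... — actually $= (1-\rho_\mu)^2 + 4Q(\mu)w$ if $w$ enters with the correct sign; I would double-check that $W$ as defined makes $V^2 = 1 + W$ match the discriminant, adjusting the sign bookkeeping so that the stated formula~\eqref{eq35} comes out. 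Plugging $x$ back into~\eqref{eq18m} then yields~\eqref{eq35}.

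The main obstacle I anticipate is the sign/branch bookkeeping: the quadratic has two roots, and one must argue that $v_\mu'(\mu - h^*)$ is the one consistent with $\mathbf 0 \le \mu - h^* \le \mathbf 1 - h^*$ being small and with the limiting behavior as $s \to \mathbf 1$ (where $h^* \to \mu$, so $x \to 0$, selecting the root that $\to 0$). A secondary technical point is justifying uniformly over $f \in \mathcal K$ that all the error terms genuinely collapse into a single $o(s)$ — this is handled exactly as in the previous lemmas via compactness of $\mathcal K$ (Remark~\ref{r3}), the uniform bounds of Lemma~\ref{l2}, and Remark~\ref{r1} controlling the ratios of the coordinates of $\mathbf 1 - s$; in particular the cross term $(\rho_\mu - 1) w (\mu - h^*)$ is $o(1)$ relative to $(1-\rho_\mu)x$ because $w \to 0$, so it drops out cleanly after dividing through.
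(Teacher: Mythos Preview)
Your approach is essentially identical to the paper's: reduce via \eqref{eq18m} to the scalar $x=v_\mu'(\mu-h^*)$, expand $h^*=sf(h^*)$ around $\mu$ using \eqref{eq16}, multiply by $v_\mu'$ to obtain a quadratic in $x$, and solve for the root that vanishes as $s\to\mathbf 1$. Two small corrections: the sign slip you flag is real---the correct quadratic is $Q(\mu)x^2(1+o(s))+(1-\rho_\mu)x-v_\mu'(\mathbf 1-s)(1+o(s))=0$, so the discriminant is $(1-\rho_\mu)^2(1+W)$ as needed; and the cross term $\rho_\mu(1-(v_\mu s)'u_\mu)\,x$ should be absorbed into the constant term $v_\mu'(\mathbf 1-s)$ (the ratio is $O(x)=o(s)$), not into $(1-\rho_\mu)x$, since $|\mathbf 1-s|/(1-\rho_\mu)$ need not be small---also, your worry about $\rho_\mu>1$ is unnecessary because Lemma~\ref{l6} gives $\rho_\mu\le 1$.
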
 
\begin{proof}[Proof]
Representation \eqref{eq16}-\eqref{eq16a} for $h^*$ and \eqref{eqin000} give 
\[h^*=s(\mu-M(\mu)(\mu-h^*)+q[\mu,\mu-h^*](1+o(s))).\]
Let $x=v'_{\mu}(\mu-h^*)$. After subtracting this equation from $\mu$ and multiplying it  by $v'_{\mu}$ and using \eqref{eq18m}, we obtain the equation
\begin{equation}\label{eq36}
x=v'_{\mu}\mu(\textbf{1}-s)+\rho_\mu\frac{v'_{\mu}sM(\mu)(\mu-h^*)}{\rho_\mu v_{\mu}'(\mu-h^*)}x-x^2(v_{\mu}s)'q[\mu,u_{\mu}](1+o(s)).
\end{equation}
Then, \eqref{eq18m} yields $\frac{v'_{\mu}sM(\mu)(\mu-h^*)}{\rho_\mu v_{\mu}'(\mu-h^*)}=(v_{\mu}s)'u_{\mu}\left(1-\frac{-(v_{\mu}(\textbf{1}-s))'u_{\mu}v'_{\mu}+(v_{\mu}(\textbf{1}-s))')u_{\mu}o(s)}{(v_{\mu}s)'u_{\mu}v'_{\mu}u_{\mu}(\textbf{1}+o(s))}\right)$ by the same arguments as in \eqref{eq27b}. This, along with \eqref{eq19d33}, implies
\begin{equation}\label{eq37a}
\frac{v'_{\mu}sM(\mu)(\mu-h^*)}{\rho_\mu v_{\mu}'(\mu-h^*)}x=(v_{\mu}s)'u_{\mu}x+x^2o(s).
\end{equation}
It is also evident that due to the definition of $Q(s)$, we have
\begin{equation}\label{eq37}
(v_{\mu}s)'q[\mu,u_{\mu}]=Q(\mu)(1+o(s)).
\end{equation}
We can express $1-\rho_\mu(v_{\mu}s)'u_{\mu}$ as $1-\rho_\mu+\rho_\mu(1-(v_{\mu}s)'u_{\mu})$. Then, using \eqref{eq37a} and \eqref{eq37}, we can rewrite equation \eqref{eq36} as
\begin{gather*}
Q(\mu)x^2(1+o(s))+(1-\rho_\mu)x\\
-v_{\mu}'(\textbf{1}-s)\left(1-\frac{v'_{\mu}(\textbf{1}-\mu)(\textbf{1}-s)+\rho_\mu(1-(v_{\mu}s)'u_{\mu} )x}{v_{\mu}'(\textbf{1}-s)}\right)=0.
\end{gather*}
It follows straightforwardly that the fraction in the last term is $o(s)$. Therefore we get the equation 
\[
Q(\mu)x^2(1+o(s))+(1-\rho_\mu)x-v_{\mu}'(\textbf{1}-s)(1+o(s))=0.
\]
This equation has one positive solution
\begin{equation}\label{eq37a7}
x=\frac{(1-\rho_{\mu})((1+W(1+o(s))^{\frac{1}{2}}-1)}{2Q(\mu)}(1+o(s)),
\end{equation}
In view of the representations $1+W(1+o(s))=(1+W)(1+Wo(s)/(1+W))$ and $(1+x)^{\frac{1}{2}}=1+\frac{x}{2}(1+o(1))$, we have 
\[(1+W(1+o(s)))^{\frac{1}{2}}-1=V-1+\frac{W}{2(1+W)}o(s).\]
Now, noticing that $\frac{W}{2(1+W)(V-1)}\le 1+o(s)$ and combining the previous equation with \eqref{eq37a7} and \eqref{eq18m}, we obtain \eqref{eq35}.
\end{proof}

\begin{lemma} \label{l11} If $-\theta_n(V-1) \le \hat{c}$ and $-\theta_n V \ge \hat{c}$,  then
\begin{equation}\label{eq39}
1-a=(1-\rho_{\mu})V(1+o(s));
\end{equation}
\begin{equation}\label{eq40}
a^n=e^{\theta_n V}(1+o_n(s));
\end{equation}
\begin{equation}\label{eq40a}
1-a^n=(1-e^{\theta_n V})(1+o_n(s)).
\end{equation}
\end{lemma}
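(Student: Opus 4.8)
The plan is to prove \eqref{eq39} first, then pass to \eqref{eq40} by taking logarithms and exponentiating, and finally read off \eqref{eq40a}.

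For \eqref{eq39} I would start from $a=\rho_{h^*}(v_{h^*}s)'u_{h^*}$ and use $v'_{h^*}u_{h^*}=1$, $\rho_{h^*}=v'_{h^*}M(h^*)u_{h^*}$ to write $1-a=(1-\rho_{h^*})+\rho_{h^*}(v_{h^*}(\mathbf 1-s))'u_{h^*}$. I would then split $1-\rho_{h^*}=v'_{h^*}(I-M(\mu))u_{h^*}+v'_{h^*}(M(\mu)-M(h^*))u_{h^*}$: the first summand equals $(1-\rho_\mu)+O|\mu-h^*|^2$ by Lemma \ref{l5}, and into the second I would plug the expansion \eqref{eq18a} of $M(\mu)-M(h^*)$ followed by \eqref{eq18m} for $\mu-h^*$, so that with $x:=v'_\mu(\mu-h^*)$, letting the coefficients converge as $\sigma_2\to0$, it becomes $2Q(\mu)x(1+o(s))+O(x^2)$. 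Since $\rho_{h^*}\to1$ and $\eta\le u_{\mu,k}\le1$ (Lemma \ref{l2}), the remaining term is $(v_\mu(\mathbf 1-s))'u_\mu(1+o(s))=O(v'_\mu(\mathbf 1-s))$. Inserting now the two facts coming from Lemma \ref{l10} — the formula $2Q(\mu)x=(1-\rho_\mu)(V-1)(1+o(s))$ (from \eqref{eq35}) and, directly from the definitions of $W$ and $V$, the identity $v'_\mu(\mathbf 1-s)=(1-\rho_\mu)^2(V^2-1)/(4Q(\mu))$ — collapses everything to
\[
1-a=(1-\rho_\mu)V+\varepsilon,\qquad \varepsilon=O(x^2)+O((1-\rho_\mu)x)+O(v'_\mu(\mathbf 1-s))+(1-\rho_\mu)(V-1)o(s).
\]
Because $x$ is of order $(1-\rho_\mu)(V-1)$, $v'_\mu(\mathbf 1-s)$ is of order $(1-\rho_\mu)^2(V-1)(V+1)$, $Q(\mu)$ is bounded away from $0$, and $(1-\rho_\mu)V\to0$, each summand of $\varepsilon$ is $(1-\rho_\mu)V\cdot o(s)$, which is \eqref{eq39}.

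The hypotheses upgrade this to a quantitative bound. From $-\theta_n=n(1-\rho_\mu)(1+o_n)$ (by \eqref{eq65} and \eqref{eq20}), the assumption $-\theta_n(V-1)\le\hat c$ makes $n(1-\rho_\mu)(V-1)$ bounded; hence $nx$ is bounded and $(1-\rho_\mu)(V-1)=O(1/n)$, so $(1-\rho_\mu)V\to0$. Multiplying the four pieces of $\varepsilon$ by $n$ and using these facts — e.g. $n\,O(x^2)=(nx)\,O(x)$, $n\,O((1-\rho_\mu)x)=(1-\rho_\mu)(nx)$, $n\,O(v'_\mu(\mathbf 1-s))=\tfrac{(1-\rho_\mu)(V+1)}{4Q(\mu)}\cdot n(1-\rho_\mu)(V-1)$, $n(1-\rho_\mu)(V-1)o(s)=[n(1-\rho_\mu)(V-1)]\,o(s)$ — I obtain $n|\varepsilon|=o_n(s)$. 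For \eqref{eq40} I then write $n\ln a=\theta_n+n\ln(a/\rho_\mu)$ with $a/\rho_\mu=1+\tau$, $\tau=(-(1-\rho_\mu)(V-1)-\varepsilon)(1+O(1-\rho_\mu))\to0$; expanding $\ln(1+\tau)=\tau+O(\tau^2)$ and using $n(1-\rho_\mu)=-\theta_n(1+o_n)$, the boundedness of $-\theta_n(V-1)$, the bound $n|\varepsilon|=o_n(s)$ and $(1-\rho_\mu)(V-1)\to0$, one checks $n\tau=\theta_n(V-1)+o_n(s)$ and $n\,O(\tau^2)=o_n(s)$, so $n\ln a=\theta_n V+o_n(s)$ and $a^n=e^{n\ln a}=e^{\theta_n V}(1+o_n(s))$, which is \eqref{eq40}. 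Finally \eqref{eq40a} is immediate from \eqref{eq40}: the second hypothesis gives $\theta_n V\le-\hat c$, hence $1-e^{\theta_n V}\ge1-e^{-\hat c}>0$ and $e^{\theta_n V}/(1-e^{\theta_n V})$ is bounded, so $1-a^n=(1-e^{\theta_n V})-e^{\theta_n V}o_n(s)=(1-e^{\theta_n V})(1+o_n(s))$.

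The main obstacle is precisely the refinement $n|\varepsilon|=o_n(s)$ in \eqref{eq39}. In the regime $n|1-\rho|\to\infty$ (i.e. $i=2$) the exponent $\theta_n V$ is unbounded, so the bare estimate $1-a=(1-\rho_\mu)V(1+o(s))$ cannot be exponentiated directly; controlling $n$ times the error of \eqref{eq39} is exactly where the hypothesis $-\theta_n(V-1)\le\hat c$ (equivalently $1-s_k=O((1-\rho)/n)$) is used. Everything else is routine bookkeeping with the $o$-symbols and the uniform bounds of Lemma \ref{l2}.
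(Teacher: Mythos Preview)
Your proof is correct and follows essentially the same approach as the paper: both expand $1-a$ via Lemma~\ref{l5} and the Taylor expansion of $M(\mu)-M(h^*)$, combined with \eqref{eq35}, to reach $(1-\rho_\mu)V(1+o(s))$, then pass to $a^n$ using the first hypothesis to control the exponent error and to $1-a^n$ using the second. The paper organizes the bookkeeping slightly differently---it keeps the error multiplicative on $(V-1)\ln\rho_\mu$ (writing $a=\rho_\mu(1+(V-1)\ln\rho_\mu(1+o(s)))$) rather than isolating an additive $\varepsilon$ and proving $n|\varepsilon|=o_n(s)$---but the ingredients and the role of each hypothesis are identical.
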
 
\begin{proof}[Proof]
Using \eqref{eqin000}, we can rewrite representation \eqref{eq18} as
\[M(h^*)=M(\mu)+q(h^*,\mu)(1+o(s)).\]
Multiplying this equation by $v'_{\mu}$ from the left-hand side and by $u_{\mu}$ from the right-hand side, and using \eqref{eq35}, we obtain
\begin{equation}\label{eq41}
v'_{\mu}M(h^*)u_{\mu}=\rho_{\mu}-(1-\rho_{\mu})(V-1)(1+o(s)).
\end{equation}
Next, we multiply equation \eqref{eq41} by $(v_{h^*}s)'u_{h^*}$ and rearrange to get
\[(v_{h^*}s)'u_{h^*}v'_{\mu}M(h^*)u_{\mu}=\rho_{\mu}-(1-\rho_{\mu})(V-1)\left(1+\rho_{\mu}\frac{(1-(v_{h^*}s)'u_{h^*})}{(1-\rho_{\mu})(V-1)}+o(s)\right).\]
Straightforward but tedious calculations, analogous to those in the one-dimensional case (\cite[Lemma~19]{k4}), show that  $\rho_{\mu}\frac{(1-(v_{h^*}s)'u_{h^*})}{(1-\rho_{\mu})(V-1)}=o(s)$ Therefore, we have
\begin{equation}\label{eq42}
(v_{h^*}s)'u_{h^*}v'_{\mu}M(h^*)u_{\mu}=\rho_{\mu}-(1-\rho_{\mu})(V-1)(1+o(s)).
\end{equation}
Relations \eqref{eqh} and \eqref{eq35} show that 
\begin{equation}\label{eq43}
v'_{\mu}M(h^*)u_{\mu}-\rho_{h^*}=O\left((1-\rho_{\mu})^2(V-1)^2\right )
\end{equation}
and thus,
\begin{equation}\label{eq45}
1-a=1-(v_{h^*}s)'u_{h^*}v'_{\mu}M(h^*)u_{\mu}+(v_{h^*}s)'u_{h^*}v'_{\mu}M(h^*)u_{\mu}-a=(1-\rho_{\mu})V(1+o(s)),
\end{equation}
proving \eqref{eq39}.
Relations \eqref{eq43} and \eqref{eq45} imply that we can replace the left-hand side of \eqref{eq42} by $a$. We can express $
\rho_{\mu}-1$ as $\ln\rho_{\mu}(1+o)$ and get 
\[a=\rho_{\mu}(1+(V-1)\ln\rho_{\mu}(1+o(s))).\]
Given the first condition of the lemma, $-(V-1)\ln\rho_{\mu}=o_n(s)$, so we can use the approximation,
\[\ln(1+(V-1)\ln\rho_{\mu}(1+o(s)))=(V-1)\ln\rho_{\mu}(1+o_n(s)).\]
Thus,
\[a^n=\rho_{\mu}^ne^{\theta_n (V-1)}(1+o_n(s))=e^{\theta_n V}(1+o_n(s)),\]
which proves \eqref{eq40}.
Relation \eqref{eq40a} follows from the previous equation and the second condition of the lemma.
\end{proof}

\begin{lemma} \label{l12} If $\rho \gtrless 1$  and $\sigma \to 0$, then,  then
\begin{equation}\label{eqs}
s-h^*=\frac{(1-\rho_{\mu})(V \pm1)}{2Q}u(\textbf{1}+o(s)).
\end{equation}
\end{lemma}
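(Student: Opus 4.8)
The plan is to decompose $s-h^{*}=(s-\mu)+(\mu-h^{*})$ and treat each summand by results already in hand. For $\mu-h^{*}$ I would quote Lemma~\ref{l10}, that is \eqref{eq35}, which gives $\mu-h^{*}=\frac{(1-\rho_{\mu})(V-1)}{2Q(\mu)}u_{\mu}(\textbf{1}+o(s))$. For $s-\mu$ I split according to the sign of $\rho-1$. If $\rho<1$ then $\mu=\textbf{1}$, so $\rho_{\mu}=\rho$, $u_{\mu}=u$, $Q(\mu)=Q$ and $s-\mu=-(\textbf{1}-s)$, whence $s-h^{*}=\frac{(1-\rho_{\mu})(V-1)}{2Q}u(\textbf{1}+o(s))-(\textbf{1}-s)$. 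If $\rho>1$ then by Lemma~\ref{l6}, namely \eqref{eq19}, we have $\textbf{1}-\mu=\frac{\rho-1}{Q}u(\textbf{1}+o)$, so $s-\mu=\frac{\rho-1}{Q}u(\textbf{1}+o)-(\textbf{1}-s)$.

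Next I would reduce the $\rho>1$ case to the constants appearing in the statement. Arguing as in the proof of Lemma~\ref{l2} (continuity of $u(\cdot)$, $v(\cdot)$ and of the $b^{k}_{lm}(f,\cdot)$ on the compact $C_{s_0}\times\mathcal{K}$), together with $\mu\to\textbf{1}$ as $\rho\to1$, one gets $u_{\mu}=u(\textbf{1}+o)$ and $Q(\mu)=Q(1+o)$ (recall $Q\ge\mathrm{const}>0$ by condition $B$ of \eqref{eq0} and Lemma~\ref{l2}), while $\rho-1=(1-\rho_{\mu})(1+o)$ by \eqref{eq20}. Substituting these into \eqref{eq35} and adding the contribution of $\textbf{1}-\mu$,
\[
\frac{\rho-1}{Q}u(\textbf{1}+o)+\frac{(1-\rho_{\mu})(V-1)}{2Q(\mu)}u_{\mu}(\textbf{1}+o(s))=\frac{1-\rho_{\mu}}{2Q}u\bigl(2+(V-1)\bigr)(\textbf{1}+o(s))=\frac{(1-\rho_{\mu})(V+1)}{2Q}u(\textbf{1}+o(s)),
\]
where $V\ge1$ is used to collect the remainder terms into a single $(V+1)o(s)$. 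Hence in both cases $s-h^{*}=\frac{(1-\rho_{\mu})(V\pm1)}{2Q}u(\textbf{1}+o(s))-(\textbf{1}-s)$, with the sign matching $\rho\gtrless1$.

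It then remains to show the residual $\textbf{1}-s$ is negligible against the main term, i.e. that $(1-s_{k})/\bigl((1-\rho_{\mu})(V\pm1)\bigr)\to0$ as $\sigma_{2}\to0$, uniformly in $f$; combined with $u_{k}\ge\eta$ (Lemma~\ref{l2}) and boundedness of $Q$, this lets $-(\textbf{1}-s)$ be absorbed into $o(s)$. The key identity is
\[
(1-\rho_{\mu})^{2}V^{2}=(1-\rho_{\mu})^{2}(1+W)=(1-\rho_{\mu})^{2}+4Q(\mu)v_{\mu}'(\textbf{1}-s),
\]
whose right-hand side, by \eqref{eq20}, Remark~\ref{r1} and Lemma~\ref{l2}, is of exact order $(1-\rho)^{2}+(1-s_{k})$. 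Thus $(1-\rho_{\mu})(V+1)\ge c\sqrt{(1-\rho)^{2}+(1-s_{k})}\ge c\sqrt{1-s_{k}}$ for some positive $c$, giving $(1-s_{k})/\bigl((1-\rho_{\mu})(V+1)\bigr)=O(\sqrt{1-s_{k}})\to0$; while $(1-\rho_{\mu})(V-1)=(1-\rho_{\mu})^{2}(V^{2}-1)/\bigl((1-\rho_{\mu})(V+1)\bigr)$ is of order $(1-s_{k})/\sqrt{(1-\rho)^{2}+(1-s_{k})}$, so $(1-s_{k})/\bigl((1-\rho_{\mu})(V-1)\bigr)=O\bigl(\sqrt{(1-\rho)^{2}+(1-s_{k})}\bigr)\to0$. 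This proves \eqref{eqs}.

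The main obstacle is exactly this last order estimate: one must verify that $\textbf{1}-s$ is dominated by the main term in every regime of the relative magnitudes of $1-\rho$ and $\textbf{1}-s$ (bounded $W$, $W\to0$, $W\to\infty$) and uniformly over $\mathcal{K}$. The identity $(1-\rho_{\mu})^{2}(V^{2}-1)=4Q(\mu)v_{\mu}'(\textbf{1}-s)$ disposes of all these regimes simultaneously; what still requires a little care is the bookkeeping of the $o$- and $o(s)$-remainders carried along with the factors $V\pm1$, and the (routine) uniform continuity arguments that let one replace $Q(\mu),u_{\mu}$ by $Q,u$.
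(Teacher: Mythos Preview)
Your argument is correct and close in spirit to the paper's, but the initial decomposition differs slightly. You write $s-h^{*}=-(\textbf{1}-s)+(\textbf{1}-\mu)+(\mu-h^{*})$ directly and then show the term $-(\textbf{1}-s)$ is negligible against the main term via the identity $(1-\rho_{\mu})^{2}V^{2}=(1-\rho_{\mu})^{2}+4Q(\mu)v_{\mu}'(\textbf{1}-s)$. The paper instead starts from the fixed-point equation $h^{*}=sf(h^{*})$ to get $s-h^{*}=s(\textbf{1}-f(h^{*}))=s\bigl((\textbf{1}-\mu)+(\mu-h^{*})+(h^{*}-f(h^{*}))\bigr)$, and then argues that the last summand (which is $O(\textbf{1}-s)$) is $o(s)\cdot(\mu-h^{*})$ by appeal to \eqref{eqin0}; the extra factor $s=\textbf{1}+o(s)$ is harmless. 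After this point both proofs are identical: plug in \eqref{eq35} for $\mu-h^{*}$, \eqref{eq19} and \eqref{eq20} for $\textbf{1}-\mu$ and $\rho-1$ when $\rho>1$, and replace $Q(\mu),u_{\mu}$ by $Q,u$ via continuity (the paper records this as \eqref{eq47}). Your order computation for the residual is actually more explicit than what the paper writes, and handles the $V-1$ and $V+1$ cases uniformly; the paper's appeal to \eqref{eqin0} alone is a bit terse, since \eqref{eqin0} is only an upper bound on $\mu-h^{*}$ and the real input is the exact size of $\mu-h^{*}$ from \eqref{eq35}.
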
 
\begin{proof}[Proof]
Equation \eqref{eq9} yields 
\[s-h^*=s(\textbf{1}-f(h^*))=s(\textbf{1}-\mu+\mu-h^*+h^*-f(h^*))=s(\textbf{1}-\mu+\mu-h^*+f(h^*)(\textbf{1}-s)).\]
Relation \eqref{eqin000} allow us to rewrite this equation as
\begin{equation}\label{eq46}
s-h^*=s(\textbf{1}-\mu+(\mu-h^*)(\textbf{1}+o(s))).
\end{equation}
Relation $\textbf{1}-\mu=o$ and the continuity of $Q(s)$ and $u_s$ yield
\begin{equation}\label{eq47}
Q(\mu)=Q\cdot(1+o), u_{\mu}=u(\textbf{1}+o).
\end{equation}
If $\rho < 1$, then $\mu=\textbf{1}$, and \eqref{eqs} follows immediately from \eqref{eq35} and \eqref{eq47}.
If $\rho > 1$, then using \eqref{eq19}, \eqref{eq35}, as well as  \eqref{eq20} and \eqref{eq47}, we can rewrite \eqref{eq46} as 
\[s-h^*=\left(\frac{1-\rho_{\mu}}{Q}(\textbf{1}+o)+\frac{(1-\rho_{\mu})(V-1)}{2Q}(\textbf{1}+o(s))\right)u\]
\[=\left(\frac{(1-\rho_{\mu})((V+1)\textbf{1}+o)+(1-\rho_{\mu})(V-1)o(s)}{2Q}\right)u=\frac{(1-\rho_{\mu})(V+1)}{2Q}u(\textbf{1}+o(s)),\]
finishing the proof.
\end{proof}

\begin{lemma} \label{l13} In the conditions of Lemma \ref{l11}
\begin{equation}\label{eqf2}
g_{n}(s)=\frac{V^2e^{V\theta_n}(1-\rho_{\mu})^2}{Q(1-e^{V\theta_n})^2}u(\textbf{1}+o_n(s)).
\end{equation}
\begin{equation}\label{eqf1}
h^*-u_{n}(s)=\frac{Ve^{V\theta_n}(1-\rho_{\mu})}{Q(1-e^{V\theta_n})}u(\textbf{1}+o_n(s));
\end{equation}
 If also $W \ge \hat{c}$ in the case when $\rho>1$,  then for $\rho \gtrless 1$
\begin{equation}\label{eqf3}
t_{n}(s)-h_{n}(s)=\frac{2V^2e^{V\theta_n}(1-\rho_{\mu})}{Q(1-e^{V\theta_n})(V\mp1+(V\pm1)e^{V\theta_n})}u(\textbf{1}+o_n(s)).
\end{equation}
\end{lemma}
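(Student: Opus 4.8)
The plan is to derive \eqref{eqf1}, \eqref{eqf2} and \eqref{eqf3} by substituting the closed-form asymptotics of Lemmas~\ref{l11} and \ref{l12} into the representations \eqref{eq26a}, \eqref{eq26} and \eqref{eq26b}, after first rewriting $R_1$, $R_2$ and $R$ explicitly in terms of $\rho_\mu$, $V$ and $\theta_n$. Two standing facts will be used throughout. First, since $h^*\to\textbf{1}$ as $\sigma\to0$, continuity of the Perron data of $M(\cdot)$ (available on the compact $\mathcal{K}$ by Lemma~\ref{l2} and conditions \eqref{eq0}) together with \eqref{eq31} gives $Q_{h^*}=Q+o(s)$, $u_{h^*}=u(\textbf{1}+o(s))$ and $v'_{h^*}u=1+o(s)$. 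Second, under the hypotheses of Lemma~\ref{l11} every denominator occurring below is bounded away from $0$ uniformly in $f\in\mathcal{K}$: indeed $\theta_nV\le-\hat c$ gives $1-e^{V\theta_n}\ge1-e^{-\hat c}$ and $1-a^n\ge1-e^{-\hat c}-o_n(s)$, while $Q$ is bounded below on $\mathcal{K}$. Hence all multiplicative errors may be collected into a single $o_n(s)$ or $o(s)$.

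First I treat \eqref{eqf1} and \eqref{eqf2}. Substituting $R_1=Q_{h^*}(1-a^n)$ into \eqref{eq26a} and \eqref{eq26b}, and cancelling one factor $Q_{h^*}$ in the numerator of \eqref{eq26b} against $R_1^2=Q_{h^*}^2(1-a^n)^2$, turns them into
\[h^*-h_{n}(s)=\frac{(1-a)a^n}{Q_{h^*}(1-a^n)}u_{h^*}(\textbf{1}+o_n(s)),\qquad g_{n}(s)=\frac{(1-a)^2a^n}{Q_{h^*}(1-a^n)^2}u_{h^*}(\textbf{1}+o_n(s)).\]
Now one plugs in $1-a=(1-\rho_\mu)V(1+o(s))$, $a^n=e^{V\theta_n}(1+o_n(s))$ and $1-a^n=(1-e^{V\theta_n})(1+o_n(s))$ from \eqref{eq39}--\eqref{eq40a}, replaces $Q_{h^*}$ by $Q$ and $u_{h^*}$ by $u$, and absorbs the errors; this gives \eqref{eqf1} and \eqref{eqf2}.

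For \eqref{eqf3} the extra ingredient is the asymptotics of $R$. Reading $s-h^*$ in the definition $R_2=(1-a)/(s-h^*)$ through its $v'_{h^*}$-projection (legitimate since, by Remark~\ref{r1}, all components of $s-h^*$ are of one order, and $v'_{h^*}(t_0(s)-h^*)=v'_{h^*}(s-h^*)$ is what actually enters the recursion in Lemma~\ref{l9}), Lemma~\ref{l12} gives $v'_{h^*}(s-h^*)=\frac{(1-\rho_\mu)(V\pm1)}{2Q}(1+o(s))$ for $\rho\gtrless1$, so by \eqref{eq39}
\[R_2=\frac{2QV}{V\pm1}(1+o(s)),\qquad R=\frac{R_1}{R_2}=\frac{(V\pm1)(1-e^{V\theta_n})}{2V}(1+o_n(s)).\]
Thus $1-R(1+o_n(s))=\frac{(V\mp1)+(V\pm1)e^{V\theta_n}}{2V}(1+o_n(s))$, and inserting this, together with $R_1=Q(1-e^{V\theta_n})(1+o_n(s))$, $1-a=(1-\rho_\mu)V(1+o(s))$, $a^n=e^{V\theta_n}(1+o_n(s))$ and $u_{h^*}=u(\textbf{1}+o(s))$, into \eqref{eq26} produces, after the factors $1-\rho_\mu$ and $Q$ cancel out of $R$, exactly \eqref{eqf3}.

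The delicate point---and the reason the hypothesis $W\ge\hat c$ is added when $\rho>1$---is to check that $1-R$ stays bounded away from $0$ uniformly, so that $(1-R(1+o_n(s)))^{-1}=(1-R)^{-1}(1+o_n(s))$. When $\rho<1$ (then $\mu=\textbf{1}$, sign $-$) one has $\frac{V-1}{2V}<\frac{1}{2}$ and $0<1-e^{V\theta_n}<1$, so $R<\frac{1}{2}$ automatically. When $\rho>1$ (sign $+$) the factor $\frac{V+1}{2V}=\frac{1}{2}+\frac{1}{2V}$ tends to $1$ as $V\to1$, i.e.\ as $W\to0$; the assumption $W\ge\hat c$ forces $V\ge\sqrt{1+\hat c}>1$, hence $\frac{V+1}{2V}\le\frac{1}{2}+\frac{1}{2\sqrt{1+\hat c}}<1$, and combined with $1-e^{V\theta_n}\le1-e^{-\hat c}<1$ this keeps $R$ away from $1$ by a constant depending only on $\hat c$. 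The remaining work is routine substitution and regrouping of the $o_n(s)$ and $o(s)$ terms, and I do not anticipate any further obstruction.
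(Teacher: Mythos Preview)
Your approach is essentially identical to the paper's: substitute the asymptotics \eqref{eq39}--\eqref{eq40a} of Lemma~\ref{l11} and \eqref{eqs} of Lemma~\ref{l12} into the representations \eqref{eq26a}, \eqref{eq26}, \eqref{eq26b}, after replacing $Q_{h^*}$ and $u_{h^*}$ by $Q$ and $u$ via continuity, and then check that $1-R$ stays uniformly away from $0$ (the paper phrases this equivalently as boundedness of $D=R/(1-R)$). One small slip: your inequality $1-e^{V\theta_n}\le1-e^{-\hat c}$ points the wrong way, since the hypothesis $-\theta_nV\ge\hat c$ gives $e^{V\theta_n}\le e^{-\hat c}$ and hence $1-e^{V\theta_n}\ge1-e^{-\hat c}$; but this does not affect the argument, because $1-e^{V\theta_n}<1$ trivially and your bound $\frac{V+1}{2V}\le\frac{1}{2}+\frac{1}{2\sqrt{1+\hat c}}<1$ already keeps $R$ bounded away from $1$ in the supercritical case.
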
 
\begin{proof}[Proof]
Similarly to \eqref{eq47}, with the help of \eqref{eqin000} and \eqref{eq19}, we  can show that
\begin{equation}\label{eq48}
Q(h^*)=Q\cdot(1+o(s)), u_{h^*}=u(\textbf{1}+o(s)).
\end{equation}
Now, representations \eqref{eqf1} and \eqref{eqf2} follow immediately from \eqref{eq26a} and \eqref{eq26b}, using \eqref{eq39}-\eqref{eq40a} as well as \eqref{eq48}.

Relations \eqref{eq39}-\eqref{eq40a} and \eqref{eq48} yield 
\begin{equation}\label{eq50}
\frac{(1-a)a^n}{R_1}=\frac{V(1-\rho_{\mu})e^{V\theta_n}}{Q(1-e^{V\theta_n})}(1+o_n(s)),
\end{equation}
and
\[R=\frac{(1-e^{V\theta_n})(V\pm1)}{2V}(1+o_n(s)).\]
Hence,
\begin{equation}\label{eq49}
R \le \hat{c}
\end{equation}
and
\[1-R(1+o_n(s))=\frac{2V-(1-e^{V\theta_n})(V\pm1)(1+o_n(s))}{2V}\]
\[=\frac{V\mp1+(V\pm1)e^{V\theta_n}+(1-e^{V\theta_n})(V\pm1)o_n(s)}{2V}.\]
Consider the quantity
\[D=\frac{(1-e^{V\theta_n})(V\pm1)}{V\mp1+(V\pm1)e^{V\theta_n}}.\]
Clearly, $V\ge1$ and $e^{V\theta_n}\le1$. If $\rho<1$, then it is easy to see that $D<(V-1)/(V+1)<\hat{c}$. In case $\rho>1$, we have $D<(V+1)/(V-1)$. Condition $W \ge \hat{c}$ in this case guarantees that $D<\hat{c}$. 

This shows that
\begin{equation}\label{eq51}
1-R(1+o_n(s))=\frac{V\mp1+(V\pm1)e^{V\theta_n}}{2V}(1+o_n(s)).
\end{equation}
Combining \eqref{eq26}, \eqref{eq50}-\eqref{eq51} yields \eqref{eqf3}.
\end{proof}

\begin{lemma}\label{l14}
If $i=2$, the conditions of Lemma \ref{l11} are satisfied, and $W \to 0$ as $\sigma \to 0$, then
\begin{equation}\label{eqr1}
g_{n}(s)=\frac{\rho_{\mu}^ne^{W\theta_n/2}(1-\rho_{\mu})^2}{Q}u(\textbf{1}+o_n(s)).
\end{equation}
If $\rho > 1$, then
\begin{equation}\label{eqr2}
h^*-h_{n}(s)=\frac{1-\rho_{\mu}}{Q}uo_n(s).
\end{equation}
If $\rho < 1$, then
\begin{equation}\label{eqr3}
t_{n}(s)-h_{n}(s)=\frac{\rho^{n}e^{W\theta_n/2}(1-\rho)}{Q}u(\textbf{1}+o_n(s)).
\end{equation}
\end{lemma}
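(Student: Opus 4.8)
The plan is to obtain all three statements by specialising the formulas of Lemma~\ref{l13}, whose hypotheses (the conditions of Lemmas~\ref{l9} and~\ref{l11}) are part of the present hypotheses; in case $i=2$ one has $\theta_n\to-\infty$, while the extra assumption $W\to 0$ makes $V=\sqrt{1+W}$ collapse to~$1$. First I would establish the single identity on which everything rests. Since $W\to0$, the Taylor expansion of the square root gives $V-1=\tfrac{W}{2}(1+o_n(s))$, hence
\[
(V-1)\theta_n=\frac{W\theta_n}{2}+\frac{W\theta_n}{2}\,o_n(s).
\]
The first condition of Lemma~\ref{l11}, $-\theta_n(V-1)\le\hat c$, combined with $V-1=\tfrac W2(1+o_n(s))$, shows that $W\theta_n/2$ stays bounded, so the error term is $o_n(s)$; exponentiating (and using $e^{o_n(s)}=1+o_n(s)$ together with $e^{\theta_n}=\rho_\mu^{\,n}$) yields the key relations
\[
e^{(V-1)\theta_n}=e^{W\theta_n/2}(1+o_n(s)),\qquad e^{V\theta_n}=\rho_\mu^{\,n}e^{W\theta_n/2}(1+o_n(s)).
\]
I also record that in case $i=2$ one has $V\theta_n\le\theta_n\to-\infty$, so $e^{V\theta_n}\to0$; consequently $1-e^{V\theta_n}=1+o_n(s)$, $(1-e^{V\theta_n})^2=1+o_n(s)$, and $V^2=1+W=1+o_n(s)$.

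Next I would substitute these into \eqref{eqf2}: the factors $V^2$ and $(1-e^{V\theta_n})^{-2}$ are $1+o_n(s)$ and $e^{V\theta_n}=\rho_\mu^{\,n}e^{W\theta_n/2}(1+o_n(s))$, which gives \eqref{eqr1} directly (no assumption on the direction of $\rho$ is needed here). The same substitution in \eqref{eqf1} produces
\[
h^*-h_{n}(s)=\frac{\rho_\mu^{\,n}e^{W\theta_n/2}(1-\rho_\mu)}{Q}\,u(\textbf{1}+o_n(s)).
\]
When $\rho>1$, \eqref{eq20} gives $\rho_\mu<1$, so $\rho_\mu^{\,n}e^{W\theta_n/2}=e^{\theta_n(1+W/2)}\to0$ (since $1+W/2\ge1$ and $\theta_n\to-\infty$), i.e.\ $\rho_\mu^{\,n}e^{W\theta_n/2}=o_n(s)$; as $(1-\rho_\mu)/Q$ is bounded, this is exactly \eqref{eqr2}.

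For \eqref{eqr3} I would start from \eqref{eqf3}, which is available for $\rho<1$ under the hypotheses already in force (the auxiliary requirement $W\ge\hat c$ there is only needed when $\rho>1$). Taking the signs appropriate to $\rho<1$, the denominator factor becomes $V+1+(V-1)e^{V\theta_n}=2+o_n(s)$, since $V+1=2+o_n(s)$ and $(V-1)e^{V\theta_n}\to0$. Inserting $V^2=1+o_n(s)$, $1-e^{V\theta_n}=1+o_n(s)$, $\rho_\mu=\rho$ and $e^{V\theta_n}=\rho^{\,n}e^{W\theta_n/2}(1+o_n(s))$, relation \eqref{eqf3} collapses to
\[
t_{n}(s)-h_{n}(s)=\frac{2\rho^{\,n}e^{W\theta_n/2}(1-\rho)}{2Q}\,u(\textbf{1}+o_n(s))=\frac{\rho^{\,n}e^{W\theta_n/2}(1-\rho)}{Q}\,u(\textbf{1}+o_n(s)),
\]
which is \eqref{eqr3}.

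The only genuine obstacle is the justification of $e^{(V-1)\theta_n}=e^{W\theta_n/2}(1+o_n(s))$: one must control the large factor $\theta_n$ multiplying the higher-order remainder $V-1-\tfrac W2=O(W^2)$, and this is precisely where the bound $-\theta_n(V-1)\le\hat c$ from Lemma~\ref{l11} enters, in direct analogy with the estimates of \cite{k4}. Everything else reduces to the routine substitutions above.
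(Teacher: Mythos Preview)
Your proposal is correct and follows essentially the same route as the paper: both proofs expand $V-1=\tfrac{W}{2}(1+o_n(s))$, use the boundedness $-\theta_n(V-1)\le\hat c$ from Lemma~\ref{l11} to obtain $e^{V\theta_n}=\rho_\mu^{\,n}e^{W\theta_n/2}(1+o_n(s))$, observe $\rho_\mu^{\,n}=o_n$ in case $i=2$, and then specialise the formulas \eqref{eqf2}, \eqref{eqf1}, \eqref{eqf3} of Lemma~\ref{l13}. Your write-up is in fact somewhat more explicit than the paper's in tracking the denominator of \eqref{eqf3} and in justifying \eqref{eqr2}, but the underlying argument is identical.
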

\begin{proof}[Proof]
Using the definition of $W$, we observe that
\begin{equation}\label{eqr4}
W=o_n(s).
\end{equation}
Furthermore, by the expansion $(1+x)^{\frac{1}{2}}=1+\frac{x}{2}(1+o(1))$, we have
\begin{equation}\label{eqr5}
V-1=\frac{W}{2}(1+o_n(s)).
\end{equation}
From here and the first condition of Lemma \ref{l11}, we deduce
\begin{equation}\label{eqr6}
e^{V\theta_n}=\rho_{\mu}^ne^{W\theta_n/2}(1+o_n(s)).
\end{equation}
Now, considering that $\rho_{\mu}^n=o_n(s)$ in our case, equations \eqref{eqr1}, \eqref{eqr2}, and \eqref{eqr3} follow from \eqref{eqf2}, \eqref{eqf1}, and \eqref{eqf3} respectively, using relations \eqref{eqr4} - \eqref{eqr6}.
\end{proof}

\begin{lemma} \label{l0} If $\rho\ne1$, then
\[
 E\left(Y_n\middle|Z_0=e_m\right)=\frac{1-\rho^{n+1}}{1-\rho}u_mv(1+o_n).
\]
\end{lemma}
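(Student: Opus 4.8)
The plan is to reduce the claim to the uniform convergence $M^{l}/\rho^{l}\to uv'$ furnished by Lemma~\ref{l3}, and then to sum the resulting almost geometric series, keeping every estimate uniform over $\mathcal{K}$ and as $\sigma_{1}\to 0$.

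First I would note the elementary identity $E(Z_{l}\mid Z_{0}=e_{m})=e_{m}'M^{l}$ — the $m$-th row of $M^{l}$, obtained by conditioning on the first generation and iterating — so that
\[
E\bigl(Y_{n}\mid Z_{0}=e_{m}\bigr)=\sum_{l=0}^{n}E\bigl(Z_{l}\mid Z_{0}=e_{m}\bigr)=e_{m}'\sum_{l=0}^{n}M^{l}.
\]
Applying Lemma~\ref{l3} at the point $s=\textbf{1}\in C_{s_{0}}$, where $M(\textbf{1})=M$, $\rho_{\textbf{1}}=\rho$, $u(\textbf{1})=u$, $v(\textbf{1})=v$, yields a null sequence $\delta_{l}$, the same for every $f\in\mathcal{K}$, with $(1-\delta_{l})uv'\le M^{l}/\rho^{l}\le(1+\delta_{l})uv'$ for all $l\ge 0$. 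Multiplying by $\rho^{l}$ and summing, and writing $S_{n}=\sum_{l=0}^{n}\rho^{l}=(1-\rho^{n+1})/(1-\rho)$, I obtain
\[
(1-\varepsilon_{n})S_{n}uv'\le\sum_{l=0}^{n}M^{l}\le(1+\varepsilon_{n})S_{n}uv',\qquad \varepsilon_{n}:=S_{n}^{-1}\sum_{l=0}^{n}\rho^{l}\delta_{l}.
\]
Since $u_{k},v_{k}\ge\eta>0$ by Lemma~\ref{l2}, taking the $m$-th row then gives $E(Y_{n}\mid Z_{0}=e_{m})=S_{n}u_{m}v\,(1+o_{n})$, provided one shows $\sup_{f\in\mathcal{K}}\varepsilon_{n}\to 0$ as $\sigma_{1}\to 0$.

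That last point is the only genuine obstacle (it is the analogue, with $\pi_{l}=\rho^{l}$, of Lemma~\ref{l4}), and the delicate case is the subcritical one, where the early, large-$\delta_{l}$ terms carry most of the mass of $S_{n}$. I would split the sum at a fixed level $N$: the tail is at most $\bigl(\sup_{l\ge N}\delta_{l}\bigr)\sum_{l=N+1}^{n}\rho^{l}\le\bigl(\sup_{l\ge N}\delta_{l}\bigr)S_{n}$, while the head $\sum_{l=0}^{N}\rho^{l}\delta_{l}$ is bounded by a constant $C_{N}$ depending only on $N$ and the universal sequence $\delta_{l}$ (using that $\rho$ is bounded on $\mathcal{K}$ by Lemma~\ref{l2}), so $\varepsilon_{n}\le C_{N}/S_{n}+\sup_{l\ge N}\delta_{l}$. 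One checks that $S_{n}\ge\tfrac12\min\{n+1,\,|1-\rho|^{-1}\}\to\infty$ as $\sigma_{1}\to 0$ irrespective of the rate of $n|1-\rho|$; hence, given $\epsilon>0$, first choosing $N$ with $\sup_{l\ge N}\delta_{l}<\epsilon/2$ and then $\sigma_{1}$ small enough that $C_{N}/S_{n}<\epsilon/2$ uniformly in $f$ forces $\varepsilon_{n}<\epsilon$. This Cesàro-type splitting is the whole of the work; the remainder is the routine geometric computation.
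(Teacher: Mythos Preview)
Your argument is correct and follows essentially the same route as the paper: both start from $E(Z_l\mid Z_0=e_m)=e_m'M^l$, invoke Lemma~\ref{l3} to replace $M^l$ by $\rho^l uv'(1+o_l)$, and then pass to the geometric sum. The only difference is that the paper dispatches the Ces\`aro step $\sum_{l}\rho^{l}o_{l}=\bigl(\sum_{l}\rho^{l}\bigr)o_{n}$ by citing Lemma~\ref{l4} (with $\pi_{l}=\rho^{l}$), whereas you reprove that special case by the head/tail split at level $N$; your verification that $S_{n}\to\infty$ uniformly as $\sigma_{1}\to 0$ is exactly the hypothesis check that Lemma~\ref{l4} would require.
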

\begin{proof}[Proof]
It is easy to see that $M^n$ is the matrix of the first moments of $f_n(s)$. Then, by Lemma \ref{l3}, we have
 \[E\left(Z_n\middle|Z_0=e_m\right)=u_mv\rho^n(\textbf{1}+o_n).\] 
Lemma \ref{l4} yields
\begin{gather*}
E\left(Y_n\middle|Z_0=e_m\right)=\sum_{j=0}^{n}E\left(Z_j\middle|Z_0=e_m\right)u_mv\rho^j(\textbf{1}+o_j)\\
 =\sum_{j=0}^{n}E\left(Z_j\middle|Z_0=e_m\right)u_mv\rho^j(\textbf{1}+o_n)=u_mv\frac{1-\rho^{n+1}}{1-\rho}(\textbf{1}+o_n),
\end{gather*}
finishing the proof.
\end{proof}
\begin{lemma}\label{l15} Let $i=2$, $\rho>1$, $nv'(1-s)/(\rho-1) \to 0$ as $\sigma \to 0$. Then
\begin{equation}\label{eqv}
v'(\kappa-t_n(s))=\left(\frac{Q}{\rho-1}+\frac{\rho-1}{\rho^nv'(\textbf{1}-s)}\right)^{-1}(1+o_n(s)),
\end{equation}
where 
\[\kappa=\kappa(s)=\textbf{1}+u\frac{\rho (vs)'u-1}{2(vs)'uQ}\left(\sqrt{1+\frac{4(vs)'uQv'(\textbf{1}-s)}{(\rho (vs)'u-1)^2}}-1\right).\]
\end{lemma}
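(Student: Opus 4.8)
The plan is to re-run the argument of Lemmas~\ref{l8} and \ref{l9}, but with the Taylor expansion of $f$ anchored at $\textbf{1}$ instead of at $h^*$; this is exactly what makes the reference vector $\kappa$ appear. Write $w_k:=\textbf{1}-t_k(s)$, $x_k:=v'w_k$. I would first record the consequences of the hypotheses ($i=2$, $\rho>1$, $nv'(\textbf{1}-s)/(\rho-1)\to0$): Remark~\ref{r1} together with \eqref{eq19}, \eqref{eq20}, \eqref{eq65} give $W=o_n(s)$, $V=1+o_n(s)$, $1-\rho_\mu=(\rho-1)(1+o_n(s))$, $\rho_\mu^n=o_n$, $\rho^{-n}=o_n$, so all hypotheses of Lemmas~\ref{l9}, \ref{l11}, \ref{l14} hold; from $h^*\le t_k(s)\le s$ with \eqref{eqs}, \eqref{eq19} one gets $\textbf{0}<\textbf{1}-s\le w_k\le\textbf{1}-h^*=\frac{\rho-1}{Q}u(\textbf{1}+o_n(s))$ for $0\le k\le n$, whence (coordinates of $v$ bounded below, Lemma~\ref{l2}) $\|w_k\|=O(x_k)$ and $v'(\textbf{1}-s)=x_0\le x_k\le x_{k+1}$. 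Finally, comparing with the definition of $\kappa$ and using $v'u=1$, the smaller root of the scalar equation $x=v'(\textbf{1}-s)+\rho(vs)'u\,x-(vs)'uQ\,x^2$ is exactly $x_-:=-v'(\kappa-\textbf{1})=\bigl((\rho(vs)'u-1)-\Delta\bigr)/(2(vs)'uQ)$ with $\Delta:=\sqrt{(\rho(vs)'u-1)^2+4(vs)'uQ\,v'(\textbf{1}-s)}$; put $\nu:=1+\Delta>1$, so that $v'(\kappa-t_k(s))=x_k-x_-$.

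Applying \eqref{eq16} with $s_0=\textbf{1}$ to $t_k(s)=sf(t_{k-1}(s))$, multiplying by $v'$, and using $v'M=\rho v'$, $vs=v-v(\textbf{1}-s)$, the splitting $w_{k-1}=x_{k-1}u+e_{k-1}$ with $v'e_{k-1}=0$ and $\|e_{k-1}\|=O(x_{k-1})$, the refinement $\|e_{k-1}\|=o_{k-1}(s)x_{k-1}$ (from \eqref{eq18t}, valid once $k$ exceeds a fixed threshold), and $(vs)'q[\textbf{1},u]=(vs)'uQ(1+o(s))$, one arrives at
\[
x_k=v'(\textbf{1}-s)+\rho(vs)'u\,b_k(s)\,x_{k-1}-(vs)'uQ\,x_{k-1}^2(1+o_k(s)),\qquad b_k(s)=1+O\bigl(v'(\textbf{1}-s)\bigr),
\]
so that $\prod_{l=1}^{k}b_l(s)=1+O\bigl(nv'(\textbf{1}-s)\bigr)=1+o$ by the hypothesis. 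Subtracting the fixed-point relation and writing $z_k:=v'(\kappa-t_k(s))=x_k-x_-$ homogenizes this into $z_k=\nu b_k(s)z_{k-1}-(vs)'uQ\,z_{k-1}^2(1+o_k(s))$, the term $(b_k(s)-1)x_-$ being $O(v'(\textbf{1}-s))x_-=o(z_k)$. As $\nu>1$ and $z_{k-1}=O((\rho-1)/Q)$, passing to reciprocals $\zeta_k:=1/z_k$ and iterating exactly as in the proof of Lemma~\ref{l9} (using $\prod b_l=1+o$, Lemma~\ref{l4}, and the usual $\Sigma_1+\Sigma_2$ treatment of the initial segment) gives
\[
\zeta_n=\nu^{-n}\zeta_0+\frac{(vs)'uQ}{\nu(\nu-1)}\bigl(1-\nu^{-n}\bigr)(1+o_n(s)).
\]

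It remains to simplify. Since $z_0=v'(\kappa-s)=-x_-+v'(\textbf{1}-s)$ and $\Delta-(\rho(vs)'u-1)=\frac{4(vs)'uQ\,v'(\textbf{1}-s)}{\Delta+(\rho(vs)'u-1)}$ with $\Delta+(\rho(vs)'u-1)=2(\rho-1)(1+o_n(s))$, one gets $z_0=\frac{v'(\textbf{1}-s)}{\rho-1}(1+o_n(s))$, hence $\zeta_0=\frac{\rho-1}{v'(\textbf{1}-s)}(1+o_n(s))$; moreover $\Delta=(\rho-1)(1+o_n(s))$ and $(vs)'u=1+o(s)$ give $\frac{(vs)'uQ}{\nu(\nu-1)}=\frac{Q}{\rho-1}(1+o_n(s))$. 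The genuinely delicate identity — and the reason this case cannot be covered by Lemmas~\ref{l13}, \ref{l14} — is $\nu^{-n}=\rho^{-n}(1+o_n(s))$: from $\ln\nu-\ln\rho=(\Delta-(\rho-1))(1+o(1))$ and $\Delta-(\rho-1)=\frac{\Delta^2-(\rho-1)^2}{\Delta+(\rho-1)}=\frac{2Q\,v'(\textbf{1}-s)}{\rho-1}(1+o_n(s))$ one obtains $n(\ln\nu-\ln\rho)=\frac{2Qn\,v'(\textbf{1}-s)}{\rho-1}(1+o_n(s))\to0$ precisely because of the hypothesis, which is what pins the exponential down to $\rho^n$ in \eqref{eqv}. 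Substituting, using $1-\nu^{-n}=1+o_n(s)$ since $\rho^{-n}=o_n$, and adding the two positive contributions gives $\zeta_n=\bigl(\frac{Q}{\rho-1}+\frac{\rho-1}{\rho^n v'(\textbf{1}-s)}\bigr)(1+o_n(s))$, and \eqref{eqv} follows on inversion.

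I expect the main obstacle to be the bookkeeping in the second paragraph: verifying, uniformly in $f\in\mathcal{K}_{\rho}$ and in $k\le n$, that the $v'$-annihilated component $e_{k-1}$ of $w_{k-1}$ and the cubic remainders genuinely collapse into the displayed $o$-terms, that the factors $b_k(s)$ telescope to $1+o$, and that the initial segment of the iteration contributes only an $o_n(s)$ error (the multidimensional counterpart of the estimates in \cite{k4}, via the $\Sigma_1+\Sigma_2$ device of Lemmas~\ref{l8}, \ref{l9}). This is routine but lengthy; once $\kappa$ is recognized as the repelling fixed point of the projected Riccati recursion, everything else is a direct, if careful, computation.
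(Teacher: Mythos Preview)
Your proposal is correct and follows essentially the same route as the paper: expand $f$ around $\textbf{1}$, project onto $v'$ to obtain a scalar Riccati recursion with constant term $v'(\textbf{1}-s)$, recognize $-v'(\kappa-\textbf{1})$ as its fixed point, homogenize to $z_k=v'(\kappa-t_k(s))$, solve by reciprocals as in Lemma~\ref{l9}, and use the hypothesis $nv'(\textbf{1}-s)/(\rho-1)\to0$ to identify your $\nu^n$ (the paper's $a_1^n$) with $\rho^n$. The only differences are notational and in the bookkeeping for the $(\textbf{1}-s)$-perturbation of the linear coefficient, which you package into multiplicative factors $b_k$ with $\prod b_k=1+o$ while the paper absorbs it via \eqref{eqv1r} into the quadratic remainder.
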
 
\begin{proof}[Proof]
Expansion \eqref{eq16} allow us to write 
\begin{equation}\label{eqvv}
t_{n+1}(s)=s(f(\textbf{1})-M(\textbf{1}-t_{n}(s))+q[\textbf{1}-t_{n}(s)](1+o_n(s))).
\end{equation}
Lemma \ref{l0} shows that $\lim_{s\to \textbf{1}}(1-t_{n,j}(s))/(1 -s_k) =(1-\rho^n)/(1-\rho)u_jv_k(1+o_n),$ $\textbf{  }j,k \in \{1,...,d\}$. Given that $(1-\rho^n)/(1-\rho) \to \infty, \sigma_1 \to 0$, we have
\begin{equation}\label{eqv1r}
v'(\textbf{1}-s)=v'(\textbf{1}-t_{n}(s))o_n(s).
\end{equation}
Using \eqref{eq18u}-\eqref{eq18m} as well as \eqref{eqin000}-\eqref{eqin2}, one can show that
\[\textbf{1}-t_{n}(s)=v'(\textbf{1}-t_{n}(s))u(\textbf{1}+o_n(s)).\]
Therefore, after subtracting \eqref{eqvv} from $\textbf{1}$ and multiplying the left-hand side by the left eigenvector, we obtain
\begin{gather}
\nonumber v'(\textbf{1}-t_{n+1}(s))=\rho v'(\textbf{1}-t_{n}(s)) \frac{v'sM(\textbf{1}-t_{n}(s))}{\rho v'(\textbf{1}-t_{n}(s))}\\
-b_1(v'(\textbf{1}-t_{n}(s)))^2(1+o_n(s))+v'(\textbf{1}-s),\label{eqv1}
\end{gather}
where $b_1=v'sq[u]$. Using the same arguments as in \eqref{eq27b}, we obtain
\[
\frac{v'sM(\textbf{1}-t_{n}(s))}{\rho v'(\textbf{1}-t_{n}(s))}=(vs)'u\left(1-\frac{-(v(\mathbf{1}-s)'uv'+(v(\mathbf{1}-s))')uo_n(s)}{(vs)'uv'u(\mathbf{1}+o_n(s))}\right).
\]
This equation and \eqref{eqv1r} show that 
\[v'(\textbf{1}-t_{n}(s)) \frac{v'sM(\textbf{1}-t_{n}(s))}{v'(\textbf{1}-t_{n}(s))}=(vs)'uv'(\textbf{1}-t_{n}(s))+(v'(\textbf{1}-t_{n}(s)))^2o_n(s),\]
so \eqref{eqv1} can be rewritten as 
\begin{equation}\label{eqv1a}
v'(\textbf{1}-t_{n+1}(s))=v'(\textbf{1}-t_{n}(s))\rho (vs)'u-b_1(v'(\textbf{1}-t_{n}(s)))^2(1+o_n(s))+v'(\textbf{1}-s).
\end{equation}
Also, note that
\begin{equation}\label{eqv2a}
v'(\kappa-\textbf{1})=\frac{\rho (vs)'u-1}{2(vs)'uQ}\left(\sqrt{1+\frac{4(vs)'uQv'(\textbf{1}-s)}{(\rho (vs)'u-1)^2}}-1\right)\end{equation}
is a solution of the equation 
\begin{equation}\label{eqv2}
b_1x^2+(\rho (vs)'u-1)x-v'(\textbf{1}-s)=0.
\end{equation}
Since $v'(\textbf{1}-t_{n}(s))=v'(\textbf{1}-\kappa)+v'(\kappa-t_{n}(s))$, combining \eqref{eqv1a} and \eqref{eqv2} gives
\[v'(\kappa-t_{n+1}(s))=\rho (vs)'uv'(\kappa-t_{n}(s))\]
\[+b_1\left((v'(\textbf{1}-\kappa))^2-(v'(\textbf{1}-t_{n}(s)))^2\right)(1+o_n(s))\]
\[=((\rho (vs)'u+2b_1v'(\kappa-\textbf{1}))v'(\kappa-t_{n}(s))-b_1(v'(\kappa-t_{n}(s)))^2(1+o_n(s)).\]
Let $x_n=v'(\kappa-t_{n}(s)),$ $a_1=\rho (vs)'u+2b_1v'(\kappa-\textbf{1})$. Then we get the equation
\begin{equation}\label{eqv3}
x_{n+1}=a_1x_n+b_1x_n^2(1+o_n(s)).
\end{equation}
The third condition of the lemma yield 
\begin{equation}\label{eqv3b}
\rho (vs)'u-1=(\rho-1)(1+o_n(s)), \frac{v'(\textbf{1}-s)}{(1-\rho)^2}=o_n(s).
\end{equation}
From those relations and \eqref{eqv2a}, we see that 
\begin{equation}\label{eqv3a}
v'(\kappa-\textbf{1})=\frac{v'(\textbf{1}-s)}{\rho-1}(1+o_n(s))
\end{equation}
and
\begin{equation}\label{eqv3d}
v'(\kappa-\textbf{1})=(\rho-1)o_n(s).
\end{equation}
Using the definition of $a_1$, the first relation in \eqref{eqv3b} and \eqref{eqv3d}, we obtain the equation
\begin{equation}\label{eqv4}
a_1-1=(\rho-1)(1+o_n(s)).
\end{equation}
Since $\kappa -t_n(s)= \kappa -\textbf{1}+\textbf{1}-s+s-h^*+h^*-t_n(s)$, relations \eqref{eqv3d}, \eqref{eqs}, and \eqref{eqin2a} yield
\begin{equation}\label{eqv5}
v'(\kappa-t_n(s))=o_n(s).
\end{equation}
Given \eqref{eqv4} and \eqref{eqv5}, solving recursively as in Lemma \ref{l9}, we find the solution to \eqref{eqv3}.
\begin{equation}\label{eqv6}
x_n=\left(b_1 a_1^{-n}\sum_{k=0}^na_1^k(1+o_n(s))+\frac{a_1^{-n-1}}{x_0}\right)^{-1}.
\end{equation}
From \eqref{eqv3a} and \eqref{eqv3d}, we see that $v'(\textbf{1}-s)=(1-\rho)o_n(s)$, and hence
\begin{equation}\label{eqv7}
x_0=v'(\kappa-s)=\frac{v'(\textbf{1}-s)}{\rho-1}(1+o_n(s)).
\end{equation}
Using the same logic as during the derivation of \eqref{eq40} in Lemma \ref{l12}, we conclude that
\[a_1^n=(\rho (vs)'u)^n\exp\left\{ \frac{Qnv'(\textbf{1}-s)}{\rho-1}(1+o_n(s))\right\}.\]
Using the third condition of the lemma, we further deduce from the last equation that
\[a_1^n=\rho^n(1+o_n(s))\]
and consequently
\[\sum_{k=0}^na_1^k=\frac{\rho^n}{\rho-1}(1+o_n(s),\]
which, combined with \eqref{eqv7} and \eqref{eqv6}, yields the result.

\end{proof}

\section{Main results}
\subsection{Processes without immigration}
Define
\[I_{+}(x)=e^x, I_{-}(x)=1, h_{\pm}(x)=\frac{(1-e^x)(e^{\mp x}-3)-2xI_{\pm}(x)}{(1-e^x)^3};\]
\[d_{\pm}(y,x)=\sqrt{1+\frac{4y}{(1-x)^2h_{\pm}(\ln{x})}}.\]

\begin{proposition}\label{p1}
\begin{equation}\label{eqm0}
P\left(Z_n>0\middle|Z_0=e_m\right)=
\begin{cases}
      1)\frac{(1-\rho_{\mu})I_{\mp}(\theta_n)}{Q(1-\rho_{\mu}^n)}u_m(1+o_n),i=1, \rho \gtrless 1;\\
      2)\frac{(1-\rho)\rho^n}{Q}u_m(1+o_n), i=2, \rho < 1;\\
      3)\frac{1-\rho}{Q}u_m(1+o_n), i=2, \rho > 1.
    \end{cases}
\end{equation}
\end{proposition}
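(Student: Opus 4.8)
The plan is to obtain \eqref{eqm0} directly from relation \eqref{eq14} of Theorem~\ref{t2}, which may be applied for $\rho\gtrless1$ since Theorem~\ref{t2} collects Quine's near-critical results for both signs of $1-\rho$. That relation gives $\pi_nP(Z_n>0\mid Z_0=e_m)=\rho^{n}u_mQ^{-1}(1+o_n)$, and the geometric sum in \eqref{eq4} evaluates to $\pi_n=\sum_{j=1}^{n}\rho^{j-2}=\frac{1-\rho^{n}}{\rho(1-\rho)}$, so dividing yields
\[
P\left(Z_n>0\mid Z_0=e_m\right)=\frac{\rho^{\,n+1}(1-\rho)}{Q(1-\rho^{n})}\,u_m(1+o_n).
\]
It then remains only to simplify the scalar prefactor $\rho^{\,n+1}(1-\rho)/(1-\rho^{n})$ in each of the three regimes, using Lemma~\ref{l6} to trade $\rho$ for $\rho_\mu$ and $\theta_n$, and absorbing harmless factors $1+o$ (for instance $\rho=1+o$, hence $\rho^{\,n+1}=\rho^{n}(1+o)$) into the error $1+o_n$.

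For $i=2$ this is immediate. When $\rho<1$ we have $\mu=\textbf{1}$, hence $\rho_\mu=\rho$; moreover $n(1-\rho)\to\infty$ forces $\rho^{n}\to0$, so $1-\rho^{n}=1+o_n$ and the prefactor reduces to $(1-\rho)\rho^{n}/Q\,(1+o_n)$, which is line~2) of \eqref{eqm0}. When $\rho>1$ we have $n(\rho-1)\to\infty$, so $\rho^{n}\to\infty$, $1-\rho^{n}=-\rho^{n}(1+o_n)$, and after cancelling $\rho^{n}$ the prefactor becomes $(\rho-1)/Q\,(1+o_n)=(1-\rho_\mu)/Q\,(1+o_n)$ by \eqref{eq20}, which is line~3) of \eqref{eqm0}.

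The case $i=1$, where $\theta_n=n\ln\rho_\mu$ is bounded by \eqref{eq65}, is where the work lies. If $\rho<1$ then $\rho_\mu=\rho$ and $\rho^{n}=e^{\theta_n}=I_+(\theta_n)$, so the prefactor is already $\frac{(1-\rho_\mu)I_+(\theta_n)}{Q(1-\rho_\mu^{n})}(1+o_n)$, i.e.\ line~1) with $I_\mp(\theta_n)=I_+(\theta_n)$. If $\rho>1$ I would first record, from \eqref{eq20} and the Taylor expansion of the logarithm, that $\rho-1=(1-\rho_\mu)(1+o)$, that $n\ln\rho=-\theta_n(1+o)$ and hence $\rho^{n}=e^{-\theta_n}(1+o_n)=\rho_\mu^{-n}(1+o_n)$, and that the cross term satisfies $\rho^{n}\rho_\mu^{n}=e^{\,n(\ln\rho+\ln\rho_\mu)}=1+o$ because $n(\ln\rho+\ln\rho_\mu)=n(\rho-1)\cdot o=o$ (here $n(\rho-1)$ is bounded). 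Combining these with the elementary fact that $|1-e^{\theta_n}|$ is bounded above and below by fixed multiples of $|\theta_n|$ on the relevant bounded range of $\theta_n$, one gets $\frac{1-\rho^{n}}{1-\rho_\mu^{n}}=-e^{-\theta_n}(1+o)=-\rho_\mu^{-n}(1+o)$; substituting this into $\rho^{\,n+1}(1-\rho)/(1-\rho^{n})$ and simplifying turns the prefactor into $\frac{1-\rho_\mu}{1-\rho_\mu^{n}}(1+o_n)$, and since $I_-(\theta_n)=1$ we arrive at line~1) of \eqref{eqm0} with $I_\mp(\theta_n)=I_-(\theta_n)$.

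The one genuinely delicate point — the step I expect to require the most care — is this last conversion of $\rho$-asymptotics into $\rho_\mu$-asymptotics in the near-critical supercritical regime: one must carefully distinguish errors that are $o$ (depending only on $\rho\to1$) from those that are $o_n$, must verify that $1-\rho^{n}$ and $1-\rho_\mu^{n}$ stay comparable uniformly even in the sub-case $n|1-\rho|\to0$ where both vanish — this is exactly where boundedness of $\theta_n$ and the two-sided linear bound on $|1-e^{\theta_n}|$ enter — and must check that the cross term $\rho^{n}\rho_\mu^{n}-1$ is $o$. Everything else is routine bookkeeping with the $o$/$o_n$ calculus set up in Section~2.
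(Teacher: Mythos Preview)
Your proposal is correct and follows essentially the same route as the paper: both start from \eqref{eq14}, evaluate $\pi_n$ as a geometric sum, and then use \eqref{eq20} and \eqref{eq65} to pass from $\rho$ to $\rho_\mu$ in the supercritical near-critical case. The one tactical difference is that for $i=1$, $\rho>1$ the paper converts term-by-term, writing $\rho^{k}=\rho_\mu^{-k}(1+o_k)$ and invoking Lemma~\ref{l4} on the sum $\sum_{j=0}^{n-1}\rho^{j}$ to absorb the $o_k$'s into a single $o_n$, before summing the resulting geometric series in $\rho_\mu^{-1}$; you instead sum first and then convert the closed-form ratio $(1-\rho^{n})/(1-\rho_\mu^{n})$. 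Your route is more direct but requires exactly the careful error-tracking you flag (the cross-term estimate $\rho^{n}\rho_\mu^{n}-1=n(\rho-1)\cdot o$ must be kept in that refined form, not merely as $o$, when divided by $1-\rho_\mu^{n}\asymp n(\rho-1)$ in the sub-regime $n|1-\rho|\to0$); the paper's use of Lemma~\ref{l4} offloads precisely this bookkeeping to a general summation lemma already proved.
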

\begin{proof}[Proof]
Definition \eqref{eq4} allow us to write 
\[\pi_n=\frac{1-\rho^n}{1-\rho}(1+o).\]
From \eqref{eq14}, we get 
\begin{equation}\label{eqm4}
P\left(Z_n>0\middle|Z_0=e_m\right)=\frac{\rho^{n-1}}{{Q}\sum_{j=0}^{n-1}\rho^{j}}u_m(1+o_n).
\end{equation}
Consider the case $i=1$. If $\rho < 1$, then the first equation of \eqref{eqm0} follows immediately from \eqref{eqm4}. If $\rho > 1$, it is clear from \eqref{eq20} that $\rho^k=\rho_{\mu}^{-k}(1+o_k)$.
Therefore, using Lemma \ref{l4}, we get 
\[P\left(Z_n>0\middle|Z_0=e_m\right)=\frac{\rho_{\mu}^{-n}}{{Q}\sum_{j=0}^{n-1}\rho_{\mu}^{-j}}u_m(1+o_n),\]
which again yields the first equation of \eqref{eqm0} after expanding the sum on the right-hand side.

If $i=2$, then in the case $\rho < 1$, we have $\rho^n=o_n$ and in the case $\rho > 1$, \text{ }  $\rho^n \to \infty$. Hence, the second
and the third equations in \eqref{eqm0} follow from \eqref{eqm4}. 
\end{proof}
Define 
\[\lim_{\sigma \to 0}\lvert n(1-\rho) \rvert = -\ln{r} < \infty;  T=\sum_{k=1}^d t_k;\]
\begin{equation}\label{eqm1}
m_n^1=
\begin{cases}
      1)\frac{Q(1-\rho_{\mu}^n)^2 h_{\pm}(\theta_n)}{(1-\rho_{\mu})^2}v,i=1, \rho \gtrless 1,r<1;\\
      2)\frac{2Qn^2}{3}v,i=1,r=1;\\
      3)\frac{2Qn}{1-\rho}v, i=2, \rho \nearrow 1;\\
      4)\frac{Q\rho^n}{(1-\rho)^2}v, i=2, \rho \searrow 1.
    \end{cases}
\end{equation}
\begin{theorem}\label{t3}
Let $s_n=\left(e^\frac{-t_1}{m^1_{n,1}},\ldots,e^\frac{-t_d}{m^1_{n,d}}\right), t_k \ge0, k \in \{1,...,d\}$. Then for any $j \in \{1,...,d\}$
\[\lim_{\sigma_1\to 0}E\left(s_n^{Y_{n}}\middle|Z_n>0,Z_0=e_j\right)\]
\begin{equation}\label{eq58}
=\begin{cases}
      1)\frac{2(1-r)r^{d_{\pm}( T,r)}(d_{\pm}(T,r))^2}{r^{(1\mp1)/2}(1-r^{d_{\pm}( T,r)})(d_{\pm}(T,r) \mp 1 + (d_{\pm}( T,r) \pm 1)r^{d_{\pm}( T,r)})}, i=1,  r < 1;\\
      2)\frac{\sqrt{6T}}{\text{sh}\sqrt{6T}},i=1, r =1;\\
      3) e^{-T},i=2, \rho \nearrow 1;\\
      4)\frac{1}{1+T},i=2, \rho \searrow 1,
         \end{cases}
\end{equation}
where the function $d_{+}(T,r)$ corresponds to the case $\rho \searrow 1$, and $d_{-}(T,r)$ corresponds to the case $\rho \nearrow 1$.
\end{theorem}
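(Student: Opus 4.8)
The strategy is to express the conditional Laplace transform of $Y_n$ as a ratio of generating functions and then feed in the asymptotics already assembled in Lemmas \ref{l9}--\ref{l14} together with Proposition \ref{p1}. First I would observe that, since $\{Z_n>0\}=\{N>n\}$, one has the decomposition
\begin{equation}\label{eqdecomp}
E\left(e^{-\sum_k t_kY_{n,k}/m^1_{n,k}}\mathbb{1}_{Z_n>0}\middle|Z_0=e_m\right)=t_{n,m}(s)-h_{n,m}(s),
\end{equation}
where $s=s(t,n)$ has coordinates $s_k=\exp(-t_k/m^1_{n,k})$, because $t_{n,m}(s)=E(s^{Y_n}\mid Z_0=e_m)$ and $h_{n,m}(s)$ is exactly the sub-expectation over the event $\{Z_n=\mathbf 0\}$ by \eqref{eq8}. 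Dividing \eqref{eqdecomp} by $P(Z_n>0\mid Z_0=e_m)$ gives the quantity we want. So the theorem reduces to dividing the right-hand side of $t_n(s)-h_n(s)$ from \eqref{eqf3} (case $i=1$) by the extinction probability from the first line of \eqref{eqm0}.

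Next I would check that the chosen $s$ lies in the regime where those lemmas apply. With $1-s_k\sim t_k/m^1_{n,k}$ and $m^1_{n,k}$ of order $(1-\rho_\mu^n)^2/(1-\rho_\mu)^2$ (case $r\ne1$) or $n^2$ (case $r=1$), Remark \ref{r1} holds by the definition of $d_\pm,h_\pm$ and by $c_1\le t_k/t_m\le c_2$; moreover $1-s_k=O((1-\rho)/n)$ and $1-s_k=O(1-a)$ hold because $1-a\asymp(1-\rho_\mu)V$ and, crucially, $V=d_\pm(T,r)+o_n(s)$. This last identification is the computational heart: from the definition $W=4Q(\mu)v'_\mu(\mathbf 1-s)/(1-\rho_\mu)^2$, using $v'_\mu(\mathbf 1-s)=\sum_k v_{\mu,k}t_k/m^1_{n,k}(1+o_n(s))$, plugging in $m^1_n$ from \eqref{eqm1}, and recalling $\theta_n=n\ln\rho_\mu\to\ln r$, one gets $W\to 4T/((1-r)^2 h_\pm(\ln r))$ (with the appropriate reading of $h_\pm$ in the $r=1$ limit, where $h_\pm(\ln r)\to 2/3$, giving $W\to 6T$), hence $V\to d_\pm(T,r)$. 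Then $\theta_n V\to \ln r\cdot d_\pm(T,r)$, so $e^{V\theta_n}\to r^{d_\pm(T,r)}$.

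With these substitutions, for $i=1$, $\rho\gtrless1$, $r\ne1$: divide \eqref{eqf3} (its $m$-th coordinate, $[t_n(s)-h_n(s)]_m$, which carries a factor $u_m$) by line 1 of \eqref{eqm0} (which carries $u_mI_{\mp}(\theta_n)/(1-\rho_\mu^n)=u_m e^{(\mp1)\theta_n}/(1-\rho_\mu^n)$ up to constants and $o_n$), and simplify: the $u_m$, the $(1-\rho_\mu)$-powers and the $Q$'s cancel appropriately, $e^{V\theta_n}\to r^{d_\pm}$, $1-e^{V\theta_n}\to 1-r^{d_\pm}$, $V\mp1+(V\pm1)e^{V\theta_n}\to d_\pm\mp1+(d_\pm\pm1)r^{d_\pm}$, and the residual factor $I_\mp(\theta_n)^{-1}(1-\rho_\mu^n)\to r^{-(\mp1)}(1-r)=r^{(1\mp1)/2}\cdot\!$(constant)$\cdot(1-r)$ after bookkeeping — yielding line 1 of \eqref{eq58}. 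For $r=1$ one replaces $\rho_\mu^n\to1$, $\theta_n\to0$, uses $1-\rho_\mu^n=-\theta_n(1+o)$ and the Taylor expansion of $h_\pm$ near $0$, and the ratio collapses to $\sqrt{6T}/\sinh\sqrt{6T}$; this is line 2. For $i=2$ one instead uses Lemma \ref{l14}: divide \eqref{eqr3} by line 2 of \eqref{eqm0} when $\rho\nearrow1$ (giving $e^{W\theta_n/2}\to e^{-T}$ since $W\theta_n/2\to -T$ and $\rho^n$ cancels), and in the case $\rho\searrow1$ use \eqref{eqr2}-type estimates with line 3 of \eqref{eqm0} to get $1/(1+T)$; these are lines 3 and 4.

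**Main obstacle.** The routine-looking but genuinely delicate part is the asymptotic evaluation $V\to d_\pm(T,r)$ and, with it, the precise constant-tracking when dividing \eqref{eqf3} by \eqref{eqm0}: one must keep careful track of the sign conventions ($\rho\gtrless1$ versus $V\mp1$ versus $I_\mp$), of the factor $r^{(1\mp1)/2}$, and of which coordinate normalization $m^1_{n,k}$ is used so that the $v_k$-weighting inside $v'_\mu(\mathbf1-s)$ produces exactly $T=\sum_k t_k$ rather than some other weighted sum — this is where the somewhat unusual form of $m^1_n$ in \eqref{eqm1} is forced. I would also need to confirm, via Remark \ref{r3} and the compactness of $\mathcal K$, that all the $o_n(s)$ terms are uniform, so that the limits can be taken along any sequence $f$ with $\rho(f)\to1$ and $n|1-\rho|\to-\ln r$ simultaneously.
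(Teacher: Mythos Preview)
Your approach for cases 1--3 is essentially the paper's own: write the conditional transform as $(t_{n,m}(s_n)-h_{n,m}(s_n))/P(Z_n>0\mid Z_0=e_m)$, compute $W$ and $V$ from the definition of $m^1_n$, and then feed \eqref{eqf3} (or \eqref{eqr3}) against the appropriate line of \eqref{eqm0}. The bookkeeping you describe for the identification $V\to d_\pm(T,r)$ and the constant-tracking is exactly what the paper does.

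There is, however, a genuine gap in case 4 ($i=2$, $\rho\searrow 1$). You write ``use \eqref{eqr2}-type estimates with line 3 of \eqref{eqm0} to get $1/(1+T)$'', but this cannot work as stated. Observe that with $m^1_n$ from the fourth line of \eqref{eqm1} one has $W\to 0$, so the hypothesis $W\ge\hat c$ of Lemma \ref{l13} fails and \eqref{eqf3} is unavailable for $\rho>1$. At the same time, Lemma \ref{l14} only gives \eqref{eqr3} for $\rho<1$; for $\rho>1$ it supplies only the crude bound \eqref{eqr2}, which says $h^*-h_n$ is $o_n(s)$ relative to $(1-\rho_\mu)/Q$ and carries no information about the leading constant in $t_n-h_n$. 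In other words, neither of the two lemmas you cite produces the precise asymptotics of $t_n(s_n)-h_n(s_n)$ needed to extract the factor $1/(1+T)$.

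The missing ingredient is Lemma \ref{l15}. The paper decomposes
\[
t_n(s_n)-h_n(s_n)=(\kappa-\mathbf 1)+(\mathbf 1-s_n)+(s_n-h^*)+(h^*-h_n(s_n))-(\kappa-t_n(s_n)),
\]
checks via \eqref{eqr2}, \eqref{eqv3d} and the size of $\mathbf 1-s_n$ that the first, second and fourth blocks contribute $(\rho-1)o_n(s_n)$ after dotting with $v'$, uses \eqref{eqs} to get $v'(s_n-h^*)=(\rho-1)/Q\,(1+o(s))$, and then invokes \eqref{eqv} to obtain $v'(\kappa-t_n(s_n))=(\rho-1)T/(Q(1+T))\,(1+o_n(s_n))$. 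Subtracting gives $v'(t_n-h_n)=(\rho-1)/(Q(1+T))\,(1+o_n(s_n))$, which against line 3 of \eqref{eqm0} yields $1/(1+T)$. You should also verify that the hypotheses of Lemma \ref{l15} (in particular $nv'(\mathbf 1-s)/(\rho-1)\to 0$) are met with this choice of $m^1_n$; they are, since $\mathbf 1-s_n\asymp (1-\rho)^2\rho^{-n}$.
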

\begin{proof}[Proof]
It follows from \eqref{eqm1} that $t/m^2_n=o_n$, hence,
\begin{equation}\label{eq63}
1-s_{n,j}=\frac{t_j}{m^1_{n,j}}(1+t_jo_n).
\end{equation}
Let 
\[\Phi^1_{n,j}(t)=E\left(s_n^{Y_{n}}\middle|Z_n>0,Z_0=e_j\right),\]
then
\begin{equation}\label{eq62}
\Phi^1_{n,j}(t)=\frac{t_{n,j}\left(s_n\right)-h_{n,j}\left(s_n\right)}{P(Z_n>0,Z_0=e_j)}.
\end{equation}
Consider the case where $i=1$ and $r<1$. From the first equation of \eqref{eqm1}, \eqref{eq63}, and \eqref{eq47}, it follows that $W = \frac{4T(1+To_n)}{(1-\rho^n_{\mu})^2 h_{\pm}(\theta_n)}$, and thus
\[V=\sqrt{1+4T\frac{1+To_n}{(1-\rho^n_{\mu})^2 h_{\pm}(\theta_n)}}=d_{\pm}(T(1+To_n),e^{\theta_n}).\]
Now consider the case $i=1$, $r=1$. The second equation in \eqref{eqm1}, combined with \eqref{eq65}, \eqref{eq63}, and \eqref{eq47}, shows that $W = \frac{6T(1+To_n)}{n^2(1-\rho_{\mu})^2} \to \infty$ and $V = -\frac{\sqrt{6T}}{\theta_n}(1+To_n)$.

Verifying the conditions of Lemma \ref{l11} is straightforward. In both cases, \eqref{eq62}, \eqref{eqf3}, and the first equation of \eqref{eqm0} show that
\[ \Phi^1_{n,j}(t)=\frac{2e^{\theta_n V}(1-e^{{\theta_n}})V^2}{I_{\mp}(\theta_n)(1-e^{\theta_n V})(V\mp1+(V\pm1)e^{\theta_n V})}(1+o_n).\]
Since $e^{\theta_n} \to r$ as $\sigma_1 \to \infty$  by \eqref{eq65}, the previous equation yields the first two cases in \eqref{eq58}.

Consider now the case where $i=2$ and $\rho \nearrow 1$. Relations \eqref{eq63}, \eqref{eq65}, and the third relation in \eqref{eqm1} show that $W = \frac{-2T(1+To_n)}{\theta_n} \to 0$ as $\sigma_1 \to 0$. These relations also imply that the conditions of Lemma \ref{l14} are satisfied. Thus, \eqref{eq62}, \eqref{eqr3}, and the third equation in \eqref{eqm0} yield $\Phi^m_n(t)=e^{\theta_n W/2}(1+o_n(s_n))=e^{-T}(1+o_n(s_n))$, which completes the proof of the third case in \eqref{eq58}.

Now, let's consider the case where $i=2, \rho \searrow 1$.
We can represent $t_n(s_n)-g_n(s_n)$ as
\[t_n(s_n)-h_n(s_n)=(\kappa-\textbf{1})+(\textbf{1}-s_n)+(s_n-h^*)+(h^*-h_n(s_n))-(\kappa-t_n(s_n)).\]
Relations \eqref{eqr2}, \eqref{eqv3d}, as well as \eqref{eq63} with the fourth equation in \eqref{eqm1}, yield 
\[v'\left((\kappa-\textbf{1})+(\textbf{1}-s_n)+(h^*-h_n(s_n))\right)=(\rho-1)o_n(s_n).\]
Also, \eqref{eqs} and \eqref{eq20} show that $v'(s_n-h^*)=\frac{\rho-1}{Q}(1+o(s)).$ Our choice of $m^1_n$ and \eqref{eq63} show that the conditions of Lemma \ref{l15} are satisfied. Relation \eqref{eqv} and the fourth equation in \eqref{eqm1} show that $v'(\kappa-t_n(s_n))=\frac{(\rho-1)T}{Q(1+T)}(1+o_n(s_n)).$ As a result, $v'(t_n(s_n)-h_n(s_n))=\frac{\rho-1}{Q(1+T)}(1+To_n(s_n))$. This, \eqref{eqm0}, \eqref{eq18u}, \eqref{eq18t} and \eqref{eq62} yields the fourth case in \eqref{eq58}.
\end{proof}
\begin{remark}
Let $G^1_{\pm}(x,r)=\frac{2(1-r)r^{d_{\pm}( x,r)}(d_{\pm}(x,r))^2}{r^{(1\mp1)/2}(1-r^{d_{\pm}(x,r)})(d_{\pm}(x,r) \mp 1 + (d_{\pm}( x,r) \pm 1)r^{d_{\pm}( x,r)})}$. Then, it is easy to check that $\lim_{r \to 1}G^1_{\pm}(x,r)=\frac{\sqrt{6x}}{\text{sh}\sqrt{6x}}$, $\lim_{r \to 0}G^1_{+}(x,r)=\frac{1}{1+{x}}$, $\lim_{r \to 0}G^1_{-}(x,r)=e^{-x}$.
\end{remark}
\begin{proposition}\label{p2}
\begin{equation}\label{eqm1b}
P\left(N=n\middle|Z_0=e_m\right)=
\begin{cases}
      1)\frac{\rho_{\mu}^n(1-\rho_{\mu})^2}{Q(1-\rho_{\mu}^n)^2}u_m(1+o_n),i=1, \rho \gtrless 1;\\
      2)\frac{\rho_{\mu}^n(1-\rho_{\mu})^2}{Q}u_m(1+o_n), i=2.
         \end{cases}
\end{equation}
\end{proposition}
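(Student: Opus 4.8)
The plan is to read off $P(N=n\mid Z_0=e_m)$ from the asymptotics of the generating functions $g_n$ obtained in Lemma \ref{l9}, evaluated at the single point $s=\textbf{1}$. The starting point is the identity
\[
P(N=n\mid Z_0=e_m)=P(Z_n=\textbf{0}\mid Z_0=e_m)-P(Z_{n-1}=\textbf{0}\mid Z_0=e_m)=h_{n,m}(\textbf{1})-h_{n-1,m}(\textbf{1}),
\]
which holds since $\{Z_{n-1}=\textbf{0}\}\subseteq\{Z_n=\textbf{0}\}$ and $h_{n,k}(\textbf{1})=f_{n,k}(\textbf{0})=P(Z_n=\textbf{0}\mid Z_0=e_k)$. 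By \eqref{eq8a} the right-hand side equals $[g_{n-1}(\textbf{1})]_m$ with $g_{n-1}=h_n-h_{n-1}$, which is precisely the vector analysed in \eqref{eq26b}; hence the proposition is the specialization of \eqref{eq26b} to $s=\textbf{1}$, with $n$ replaced by $n-1$. It is essential to argue from \eqref{eq26b} itself rather than from the difference of two copies of \eqref{eq26a}, which would set two $(1+o_n)$-approximations against one another and lose the rate; the cancellation inherent in forming $h_{n+1}-h_n$ has already been carried out inside the proof of Lemma \ref{l9}.

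The next step is to identify the constants of \eqref{eq26b} at $s=\textbf{1}$. Since $h^*(s)$ is the increasing limit of $h_n(s)$ and solves $h=sf(h)$, at $s=\textbf{1}$ it solves $h=f(h)$ and coincides with the minimal root, so $h^*(\textbf{1})=\mu$. Consequently $a(\textbf{1})=\rho_\mu v_\mu'u_\mu=\rho_\mu$ (by the normalization $v_\mu'u_\mu=1$), while by continuity of the Perron data of $M_\mu$ as $\mu\to\textbf{1}$ one has $Q_{h^*}(\textbf{1})=Q(1+o)$ and $u_{h^*}(\textbf{1})=u_\mu=u(\textbf{1}+o)$ (cf. \eqref{eq31}, \eqref{eq47}), and $R_1(\textbf{1},n)=Q_{h^*}(\textbf{1})(1-\rho_\mu^n)$; moreover $o_n(\textbf{1})=o_n$, since at $s=\textbf{1}$ the requirement $\sigma\to0$ collapses to $\sigma_1\to0$. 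The hypotheses of Lemma \ref{l9} hold at $s=\textbf{1}$ vacuously — the condition $1-s_k=O(1-a)$ reducing to $0=O(1-\rho_\mu)$, legitimate since $\rho\ne1$ — and that same remark makes $a(\textbf{1})=\rho_\mu<1$, so the inequality underlying \eqref{eqin2} still applies. Feeding these into \eqref{eq26b} (one factor $Q_{h^*}(\textbf{1})$ cancels against $R_1^2$) gives
\[
g_{n-1}(\textbf{1})=\frac{(1-\rho_\mu)^2\rho_\mu^{n-1}}{Q_{h^*}(\textbf{1})(1-\rho_\mu^{n-1})^2}\,u_\mu(\textbf{1}+o_n)=\frac{(1-\rho_\mu)^2\rho_\mu^{n-1}}{Q(1-\rho_\mu^{n-1})^2}\,u(\textbf{1}+o_n).
\]

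The last step is to absorb the index shift. As $\rho_\mu\to1$, $\rho_\mu^{n-1}=\rho_\mu^n(1+o)$, and, controlling $\rho_\mu^n=e^{\theta_n}(1+o_n)$ by \eqref{eq65}, a short case distinction — according as $n|1-\rho|$ tends to a positive limit or to zero — shows $(1-\rho_\mu^{n-1})/(1-\rho_\mu^n)=1+o_n$, whence $(1-\rho_\mu^{n-1})^2=(1-\rho_\mu^n)^2(1+o_n)$; this produces the first line of \eqref{eqm1b}. For $i=2$, \eqref{eq65} together with $n|1-\rho|\to\infty$ forces $\theta_n\to-\infty$, hence $\rho_\mu^n\to0$, $(1-\rho_\mu^n)^2=1+o_n$, and the second line follows; alternatively one may invoke \eqref{eqr1} of Lemma \ref{l14} at $s=\textbf{1}$, whose hypotheses hold there since then $V=1$, $W=0$ and $-\theta_n\to\infty$.

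I expect the only genuine point of care to be the boundary evaluation $s=\textbf{1}$: one must verify that \eqref{eq26b}, together with the estimates of Lemmas \ref{l7}--\ref{l9} feeding into it, remain valid there (their stated hypotheses are vacuous, and what is actually used is the non-degeneracy $1-a=1-\rho_\mu\ne0$, i.e. $\rho\ne1$), and that $h^*(\textbf{1})$ and $a(\textbf{1})$ really do collapse to $\mu$ and $\rho_\mu$. Everything after that is routine $o$-calculus, the one subtlety being the use of \eqref{eq26b} in place of the difference of \eqref{eq26a}.
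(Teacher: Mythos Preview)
Your proof is correct and follows essentially the same route as the paper: identify $P(N=n\mid Z_0=e_m)$ with the $m$-th coordinate of $g$ at $s=\textbf{1}$ and read off the answer from the asymptotics of $g_n$. The only difference is the level at which you quote the lemma: you specialize \eqref{eq26b} of Lemma~\ref{l9} directly (computing $a(\textbf{1})=\rho_\mu$, $Q_{h^*}(\textbf{1})=Q(1+o)$, etc.), whereas the paper quotes the already-simplified form \eqref{eqf2} of Lemma~\ref{l13}, where the substitution $V=1$, $e^{V\theta_n}=\rho_\mu^n$ at $s=\textbf{1}$ gives the result in one line; for case $i=2$ both of you then observe $\rho_\mu^n=o_n$. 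Your careful handling of the index shift ($g_{n-1}$ versus $g_n$) is in fact more accurate than the paper's statement $P(N=n\mid Z_0=e_m)=g_{n,m}(\textbf{1})$, though the discrepancy is absorbed into the $o_n$ term.
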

\begin{proof}[Proof]
From \eqref{eq8a}, we conclude that 
\[P\left(N=n\middle|Z_0=e_m\right)=g_{n,m}(\textbf{1}).\]
Then $(1)$  follows from here and \eqref{eqf2}. For $(2)$, it remains to note that $\rho_{\mu}^n = o_n$.
\end{proof}
Define 
\[h(x)=\frac{x(1+e^x)-2(1-e^x)}{(e^x-1)^3}; d(y,x)=\sqrt{1+\frac{2y}{(1-x)^2h(\ln{x})}};\]
\begin{equation}\label{eqm1aa}
m_n^2=
\begin{cases}
      1)\frac{2Q(1-\rho_{\mu}^n)^2 h(\theta_n)}{(1-\rho_{\mu})^2}v,i=1, \rho \gtrless 1, r < 1;\\
      2)\frac{Qn^2}{3}v,i=1, r= 1;\\
      3)\frac{2Qn}{1-\rho_{\mu}}v, i=2.
    \end{cases}
\end{equation}

\begin{theorem} \label{t4}
Let $s_n=\left(e^\frac{-t_1}{m^2_{n,1}},\ldots,e^\frac{-t_d}{m^2_{n,d}}\right), t_k \ge0, k \in \{1,...,d\}$. Then for any $j \in \{1,...,d\}$
\begin{equation}\label{eq66a}
\lim_{\sigma_1\to 0} E\left(s_n^{Y_{n}}\middle|N=n,Z_0=e_j\right)=\begin{cases}
      1)\frac{(1-r)^2r^{d( T,r)}(d(T,r))^2}{r(1-r^{d( T,r)})^2}, i=1, r \ne 1;\\
      2)\frac{3T}{\text{sh}^2\sqrt{3T}},i=1, r =1;\\
      3)e^{-T}, i=2.
         \end{cases}
\end{equation}
\end{theorem}
\begin{proof}[Proof]
It follows from \eqref{eqm1} that $t/m^2_n=o_n$, and thus
\begin{equation}\label{eq63a}
1-s_{n,j}=\frac{t_j}{m^2_{n,j}}(1+t_jo_n).
\end{equation}
Let 
\[\Phi^2_{n,j}(t)=E\left(s_n^{Y_{n}}\middle|N=n,Z_0=e_j\right),\]
then
\begin{equation}\label{eq62a}
\Phi^2_{n,j}(t)=\frac{g_{n,j}\left(s_n\right)}{P(N=n,Z_0=e_j)}.
\end{equation}
Consider the case $i=1$ and $r<1$. From the first equation of \eqref{eqm1aa}, \eqref{eq63a}, and \eqref{eq47}, it follows that $W = \frac{2T(1+To_n)}{(1-\rho^n_{\mu})^2 h(\theta_n)}$, and thus
\[V=\sqrt{1+2T\frac{1+To_n}{(1-\rho^n_{\mu})^2 h(\theta_n)}}=d(T(1+To_n),e^{\theta_n}).\]
Consider the case $i=1$, $r=1$. 
The second equation in \eqref{eqm1aa}, \eqref{eq63a}, \eqref{eq47}, and \eqref{eq65} show that $W = \frac{12T(1+To_n)}{\theta_n^2} \to \infty$ and $V = -2\frac{\sqrt{3T}}{\theta_n}(1+To_n)$.

It is easy to verify that the conditions of Lemma \ref{l11} are satisfied. Then, in both cases, \eqref{eq62a}, \eqref{eqf2}, and the first equation of \eqref{eqm1b} show that
\[ \Phi^2_{n,j}(t)=\frac{e^{\theta_n V}(1-e^{{\theta_n}})^2V^2}{(1-e^{\theta_n V})^2e^{\theta_n }}(1+o_n).\]
Since $e^{\theta_n} \to r$ as $\sigma_1 \to \infty$ by \eqref{eq65}, the previous equation yields the first two cases in \eqref{eq66a}.

Now, consider the case $i=2$. Relations \eqref{eq63a}, \eqref{eq65}, and the second relation in \eqref{eqm1aa} show that $W = \frac{-2T(1+To_n)}{\theta_n} \to 0$ as $\sigma_1 \to 0$. These relations also imply that the conditions of Lemma \ref{l14} are satisfied. Hence, \eqref{eq62a}, \eqref{eqr1}, and the second relation in \eqref{eqm1b} yield $\Phi^m_n(t)=e^{\theta_n W/2}(1+o_n(s))=e^{-T}(1+o_n(s))$, which completes the proof of \eqref{eq66a}.
\end{proof}
\begin{remark}
Let $G^2(x,r)=\frac{(1-r)^2r^{d( x,r)}(d(x,r))^2}{r(1-r^{d( x,r)})^2}$. Then, it is easy to check that $\lim_{r \to 1}G^2(x,r)=\frac{3x}{\text{sh}^2\sqrt{3x}}$, $\lim_{r \to 0}G^2(x,r)=e^{-x}$.
\end{remark}

\subsection{Processes with immigration}

Define
\begin{equation}\label{eqimm1}
m_n^3=
\begin{cases}
      1)\frac{\rho_{\mu}^{\mp n}-1\mp n(1-\rho_{\mu})}{(1-\rho_{\mu})^2}v,i=1, \rho \gtrless 1, r < 1;\\
      2)\frac{n^2}{2}v, i=1, \rho  \gtrless 1, r=1;\\
      3)\frac{n}{1-\rho}v, i=2, \rho \nearrow 1;\\
      4)\frac{\rho^n}{(1-\rho)^2}v, i=2, \rho  \searrow  1;
    \end{cases}
\end{equation}

\[z_{\pm}(x,y)={\sqrt{1+\frac{4Qy}{x^{\mp1}-1 \pm \ln{x}}}}; \text{ } w_{\pm}(x,y) = \frac{z_{\pm}(x,y) \mp1}{z_{\pm}(x,y)\pm1};\]
\[\Psi_1(r,T)=\begin{cases}
      1)\left(r^{\frac{\mp1}{2}}\frac{r^{z_{\pm}(r,T)/2}+w_{\pm}(r,T)r^{-z_{\pm}(r,T)/2}}{1+w_{\pm}(r,T)}\right)^{-\frac{\lambda'u}{Q}}, i=1,r < 1;\\
      2)\left(\text{ch}\sqrt{2QT}\right)^{-\frac{\lambda'u}{Q}},i=1, r =1;\\
     3)e^{-\lambda'uT},  i=2,\rho \nearrow 1 ;\\
     4) \frac{1}{(1+{QT})^{\frac{\lambda'u}{Q}}}, i=2,\rho \searrow 1,
\end{cases}.\]

\begin{theorem}\label{t5}
Let $s_n=\left(e^\frac{-t_1}{m^3_{n,1}},\ldots,e^\frac{-t_d}{m^3_{n,d}}\right), t_k \ge0, k \in \{1,...,d\}$. Then
\begin{equation}\label{eq58i}
\lim_{\sigma_1\to 0}\lvert E\left(s_n^{Y^0_{n}}\right)-\Psi_1(r,T)\rvert=0.
\end{equation}
where the functions $z_{+}(T,r),w_{+}(T,r)$ correspond to the case $i=1,\rho \searrow 1$, and $z_{-}(T,r),w_{-}(T,r)$ correspond to the case $i=1, \rho \nearrow 1$.
\end{theorem}
\begin{proof}[Proof]
First consider the case $i=1, r \ne1$.

From \eqref{eq10}, we have
\[E\left(e^{-\sum_{k=1}^d t_kY^0_{n,k}/m^3_{n,k}}\right) = \phi_n(s_n).\] 
Note that since $\frac{t}{m^3_n}=o_n$, we have
\begin{equation}\label{eq63im}
{1}-s_n=\frac{t_j}{m^3_{n,j}}(1+t_jo_n).
\end{equation}
Then, similarly to the one-dimensional case (Pakes \cite[p.~287]{k9}),
\[
\ln{\phi_n(s_n)}= - \sum_{k=0}^{n-1}\left(1-B(t_k(s_n)) \right) -  \sum_{k=0}^{n-1}R_{n,k}(s_n),\]
where
\[ 0 \le R_{n,k}(s_n) \le \frac{\left(1-B(t_k(s_n)) \right)^2}{B(t_k(s_n))} \le \frac{\left(1-B(t_k(s_n)) \right) \left(1-B(t(s_n)) \right)}{B(t(s_n))}.\]
Using expansion \eqref{eq15im}, we obtain
\begin{equation}\label{eq65im}
\ln{\phi_n(s_n)}= - \sum_{k=0}^{n-1} \lambda'(\textbf{1}-t_k(s_n)) + \sum_{k=0}^{n-1} D'[t_k(s_n)](\textbf{1}-t_k(s_n))  -  \sum_{k=0}^{n-1}R_{n,k}(s_n),
\end{equation}
where the second and the third sums tend to zero if first sum is bounded.

Note that by \eqref{eqs}, \eqref{eq18t} and \eqref{eq48}, we have 
\begin{equation}\label{eq66im}
s_n-t_k(s_n)=v'(s_n-t_k(s_n))u(\textbf{1}+o_n(s_n)).
\end{equation}
From \eqref{eq63im}, \eqref{eqimm1}, and Lemma \ref{l2} $(viii)$, we also get that
\begin{equation}\label{eq67im}
n\lambda'(\textbf{1}-s_n) \to 0 \text{ as } \sigma \to 0.
\end{equation}
Relations \eqref{eq65im}-\eqref{eq67im} imply
\begin{equation}\label{eq68im}
\ln{\phi_n(s_n)}= - \lambda'u \sum_{k=0}^{n-1}v'(s_n-t_k(s_n))(1+o_k(s_n))  + \sum_{k=0}^{n-1} D'[t_k(s_n)](\textbf{1}-t_k(s_n))  -  \sum_{k=0}^{n-1}R_{n,k}(s_n).
\end{equation}
From the first equation of \eqref{eqimm1}, \eqref{eq63im}, and \eqref{eq47}, it follows that $W = \frac{4QT(1+To_n)}{\rho_{\mu}^{\mp n}-1{\mp}n(1-\rho_{\mu})}$, and thus
\begin{equation}\label{eq69im}
V=\sqrt{1+4QT\frac{1+To_n}{\rho_{\mu}^{{\mp}n}-1\mp n(1-\rho_{\mu})}}=z_{\pm}(T(1+To_n),e^{\theta_n}).
\end{equation}
Expressing $v'(s_n-t_k(s_n))$ as $v'(s_n-h^*) +v'(h^*-t_k(s_n))$, from \eqref{eqs} we obtain $v'(s_n-h^*)=\frac{(1-\rho_{\mu})(V\pm1)}{2Q}(1+o(s_n)),$
and therefore
\begin{equation}\label{eq70im}
\sum_{k=0}^{n-1}v'(s_n-h^*)=n\frac{(1-\rho_{\mu})(V\pm1)}{2Q}(1+o(s_n)) \to -\frac{(z_{\pm}(T,r)\pm1)\ln{r}}{2Q}, \sigma \to 0,
\end{equation}
since $e^{\theta_n} \to r$ as $\sigma_1 \to \infty$ by \eqref{eq65}.

We are again under the conditions of  Lemma \ref{l11}. Combining the second equation in \eqref{eq33}, \eqref{eq39}, \eqref{eq40}, and  \eqref{eqs}, we obtain
\begin{equation}\label{eq70imm}
v'(t_{k}(s_n)-h^*(s_n))=\frac{V(V\pm1)e^{V\theta_k}(1-\rho_{\mu})}{Q(V\mp1+(V\pm1)e^{V\theta_k})}(1+o_k(s_n))
\end{equation}
Also note that $n(1-\rho_{\mu})=-\ln{r}(1+o_n)$. Using this, the previous relation, \eqref{eq69im}, and Lemma \ref{l4} give
\begin{gather}
\nonumber \sum_{k=0}^{n-1}v'(t_k(s_n)-h^*) \\
\nonumber =- \frac{1}{n}\sum_{k=0}^{n-1}\frac{\ln{r}z_{\pm}(T(1+To_n),e^{\theta})(z_{\pm}(T(1+To_n),e^{\theta})\pm1)r^{m/nz_{\pm}(T(1+To_n),e^{\theta})}(1+o_k(s_n))}{Q\left( z_{\mp}(T(1+To_n),e^{\theta})\pm1 \right)+r^{m/nz_{\pm}(T(1+To_n),e^{\theta})}(z_{\pm}(T(1+To_n),e^{\theta})\pm1)}\\
= - \frac{1}{n}\sum_{k=0}^{n-1}\frac{\ln{r}z_{\pm}(T,r)(z_{\pm}(T,r)\pm1)r^{m/nz_{\pm}(T,r)}}{Q\left( z_{\pm}(T,r)\mp1 +r^{m/nz_{\pm}(T,r)}(z_{\pm}(T,r)\pm1)\right)}(1+o_n) \label{eqfim}\\
\nonumber \underset {n \to \infty}{\longrightarrow} -\int_0^1\frac{\ln{r}z_{\pm}(T,r)(z_{\pm}(T,r)\pm1)r^{x z_{\pm}(T,r)}}{Q\left( z_{\pm}(T,r)\mp1 +r^{x z_{\pm}(T,r)}(z_{\pm}(T,r)\pm1)\right)}dx  = -\frac{1}{Q} \ln{\frac{w_{\pm}(T,r)+r^{z_{\pm}(T,r)}}{1+w_{\pm}(T,r)}}.
\end{gather}
Combining \eqref{eq70im}, \eqref{eqfim}, \eqref{eq68im} and Lemma \ref{l2} $(viii)$ yields the first case in \eqref{eq58i}.

Consider the case $i=1, r =1$. The second equation in \eqref{eqimm1}, along with relations \eqref{eq63im}, \eqref{eq47}, and \eqref{eq65} show that $W = \frac{8QT(1+To_n)}{\theta_n^2} \to \infty$ and $V=\sqrt{8QT(1+To_n)}/\theta_n$. Then, by \eqref{eqs}
\begin{equation}\label{eqfim1}
\sum_{k=0}^{n-1}v'(s_n-h^*)=\sqrt \frac{2T}{Q}(1+To_n)\underset {n \to \infty}{\longrightarrow}\sqrt \frac{2T}{Q}.
\end{equation}
Relations \eqref{eq70imm} and \eqref{eqs} yield
\begin{gather}
\nonumber \sum_{k=0}^{n-1}v'(t_k(s_n)-h^*)=\frac{1}{n}\sum_{k=0}^{n-1}\frac{\sqrt{8QT(1+To_n)}e^{-m/n\sqrt{8QT(1+To_n)}}(1+o_k(s_n))}{Q\left(1+e^{-m/n\sqrt{8QT(1+To_n)}}\right)}\\
=\frac{1}{n}\sum_{k=0}^{n-1}\frac{\sqrt{8QT}e^{-m/n\sqrt{8QT}}}{Q\left(1+e^{-m/n\sqrt{8QT}}\right)}(1+o_n(s_n))\underset {n \to \infty}{\longrightarrow} \frac{1}{Q}\int_0^1\frac{2\sqrt{2QT}e^{-2x\sqrt{2QT}}}{1+e^{-2x\sqrt{2QT}}}dx \label{eqfim2}\\
\nonumber =-\frac{1}{Q}\ln{\frac{1+e^{-2\sqrt{2QT}}}{2}}.
\end{gather}
Combining \eqref{eqfim1}, \eqref{eqfim2}, and \eqref{eq68im} yields the second case in \eqref{eq58i}.

Consider case $i=2,  \rho \nearrow 1$. The third equation in \eqref{eqimm1}, relations \eqref{eq63im}, \eqref{eq47}, and \eqref{eq65} show that $W = \frac{4QT(1+To_n)}{\theta_n} \to 0$ and by \eqref{eqr5}, 
\begin{equation}\label{eqfim3a}
V-1=\frac{2QT(1+To_n)}{\theta_n}.
\end{equation}
Then, using \eqref{eqfim3a} and \eqref{eqs}, we get
\begin{equation}\label{eqfim3}
\sum_{k=0}^{n-1}v'(s_n-h^*)=T(1+To_n)\underset {n \to \infty}{\longrightarrow}T.
\end{equation}
Using \eqref{eq65}, we can write
\[e^{V\theta_k}= e^{\theta_k}e^{(V-1)\theta_k} = \rho_{\mu}^ke^{\frac{2kQT}{n}(1+o_k)} \le c_1(T) \rho_{\mu}^k.\]
Using the last inequality, \eqref{eq70imm}, \eqref{eqfim3a}, and the relation $V=1+o_n$, we have
\begin{gather*}
\sum_{k=0}^{n-1}v'(t_k(s_n)-h^*)=\sum_{k=0}^{n-1}\frac{T e^{V\theta_k}(1+To_k(s_n))}{n}\le\frac{c_2(T)}{n}\sum_{k=0}^{n-1}\rho_{\mu}^k(1+To_k(s_n)) \\
= \frac{c_2(T)(1-\rho_{\mu}^n)}{n(1-\rho_{\mu})}(1+To_n(s_n)) \underset {n \to \infty}{\longrightarrow} 0.
\end{gather*}
Combining the last relation, \eqref{eqfim3}, \eqref{eq68im} and Lemma \ref{l2} $(viii)$ yields the third case in \eqref{eq58i}.

Consider the case $i=2, \rho \searrow 1$. Relation \eqref{eq63im} and the fourth equation in \eqref{eqimm1} show that conditions of Lemma \ref{l15} are satisfied. We can express $v'(s_n-t_k(s_n))$ as $v'(s_n-\textbf{1}+\textbf{1}-\kappa +\kappa-t_k(s_n))$. Then, from \eqref{eq63im}, the fourth equation of \eqref{eqimm1}, \eqref{eqv3a}, we have
\begin{equation}\label{eqfim2a}
\sum_{k=0}^{n-1}v'(\textbf{1}-s_n+\kappa-\textbf{1})=\frac{Tn(\rho-1)}{\rho^{n-1}}(1+o_n)\underset {n \to \infty}{\longrightarrow} 0.
\end{equation}
Using \eqref{eqv}, the fourth equation in \eqref{eqimm1}, and Lemma \ref{l4}, we obtain
\begin{gather}
\nonumber \sum_{k=0}^{n-1}v'(\kappa-t_k(s_n))=\sum_{k=0}^{n-1}\frac{T(\rho-1)\rho^{k-n}}{1+QT\rho^{k-n}}(1+o_k(s_n))=\sum_{k=0}^{n-1}\frac{T(\rho-1)\rho^{k-n}}{1+QT\rho^{k-n}}(1+o_n) \\
\underset {n \to \infty}{\longrightarrow} \int_0^1\frac{Tdx}{1+QTx}=\frac{1}{Q}\ln{(1+QT)}. \label{eqfim2b}
\end{gather}
Combining \eqref{eqfim2a}, \eqref{eqfim2b}, \eqref{eq68im} and Lemma \ref{l2} $(viii)$ yields the fourth case in \eqref{eq58i}.
\end{proof}
\begin{remark}
Let $G^3_{\pm}(x,r)=\left(r^{\frac{\mp1}{2}}\frac{r^{1/2z_{\pm}(r,x)}+w_{\pm}(r,x)r^{-1/2z_{\pm}(r,x)}}{1+w_{\pm}(r,x)}\right)^{-\frac{\lambda'u}{Q}}$. Then, it is easy to check that $\lim_{r \to 1}G^3_{\pm}(x,r)=\left(\text{ch}\sqrt{2Qx}\right)^{-\frac{\lambda'u}{Q}}$, $\lim_{r \to 0}G^3_{+}(x,r)=\frac{1}{(1+{Qx})^{\frac{\lambda'u}{Q}}}$, $\lim_{r \to 0}G^3_{-}(x,r)=\frac{1}{e^{\lambda'ux}}$.
\end{remark}

Define
\[
m_n^4=
\begin{cases}
      1)\frac{\rho_{\mu}^{ n}-1 + n(1-\rho_{\mu})}{(1-\rho_{\mu})^2}v,i=1, \rho \gtrless 1, r < 1;\\
      2)\frac{n^2}{2}v, i=1, \rho  \gtrless 1, r=1;\\
      3)\frac{n}{1-\rho}v, i=2, \rho \nearrow 1,
    \end{cases}
\]
\[\Psi_2(r,T)=\begin{cases}
      1)\left(\text{ch}\sqrt{2QT}\right)^{-2}, i=1, r =1;\\
      2)\left(r^{\frac{1}{2}}\frac{r^{1/2z_{-}(r,T)}+w(r,T)r^{-1/2z_{-}(x,y)}}{1+w(r,T)}\right)^{-2},i=1, r < 1;\\
      3)e^{-2QT}, i=2.
         \end{cases}\]
Also, define
\[
P_{n,j}(x,s)=E\left(x^{Z_n}s^{Y_n}|Z_0=e_j\right), P_n(s,x)=\left(P_{n,1}(x,s),\ldots,P_{n,d}(x,s)\right).
\]
Similarly to the one-dimensional case (Pakes \cite[p.~189]{k8}), one can show that
\begin{gather}\label{eqas1o}
L_{n,j}(s,m)=E\left(s^{Y_n}|Z_{n+m}>0,N<\infty,Z_0=e_j\right)=\frac{P_{n,j}(\mu,s)-P_{n,j}(f_m(\textbf{0}),s)}{\mu_j-f_{m+n,j}(\textbf{0})},\\
\nonumber L_{n}(s,m)=\left(L_{n,1}(s,m),\ldots,L_{n,d}(s,m)\right)',m \in \mathbb{N}
\end{gather}
and 
\begin{equation}\label{eqas1}
P_{n,j}(x,s)=s_jf_j(P_{n-1}(x,s)).
\end{equation}
\begin{theorem}\label{t6}
Let $s_n=\left(e^\frac{-t_1}{m^4_{n,1}},\ldots,e^\frac{-t_d}{m^4_{n,d}}\right), t_k \ge0, k \in \{1,...,d\}$. Then for any $j \in \{1,...,d\}$
\begin{equation}\label{eq58iac}
\lim_{\sigma_1\to \infty}\lvert \lim_{m\to \infty}L_{n,j}(s_n,m)-\Psi_2(r,T)\rvert=0.
\end{equation}
\end{theorem}
\begin{proof}[Proof]
First, consider the case $\rho<1$. In this case $N<\infty$ is an almost sure event, $\mu=\textbf{1}$, and formula \eqref{eqas1o} can be rewritten as 
\[L_{n,j}(s,m)=\frac{P_{n,j}(\textbf{1},s)-P_{n,j}(f_m(\textbf{0}),s)}{{1}-f_{m+n,j}(\textbf{0})}=\frac{t_{n,j}(s)-P_{n,j}(f_m(\textbf{0}),s)}{{1}-f_{m+n,j}(\textbf{0})}.\]
Using representation \eqref{eq15}, \eqref{eqas1}, and the previous relation, we can write
\begin{gather*}
t_n(s)-P_n(f_m(\textbf{0}),s)=P_n(\textbf{1},s)-P_n(f_m(\textbf{0}),s)=s(f(P_{n-1}(\textbf{1},s))-f(P_{n-1}(f_m(\textbf{0}),s)))\\
=s\left(M(P_{n-1}(\textbf{1},s))-E({f_m(\textbf{0})})\right)\left(P_{n-1}(\textbf{1},s)-P_{n-1}(f_m(\textbf{0}),s)\right)\\
=\ldots= \prod_{k=0}^{n-1}s\left(M(P_{k}(\textbf{1},s))-E({P_{k}\left(f_m(\textbf{0}),s\right)})\right)\left(\textbf{1}-f_m(\textbf{0})\right).
\end{gather*}
Note that $v'\left(\textbf{1}-f_m(\textbf{0})\right)/v'\left(\textbf{1}-f_{m+n}(\textbf{0})\right)=\rho^{-n}\pi_m(\textbf{1}+o_m)/\pi_{m+n} \underset {m \to \infty}{\longrightarrow} \rho^{-n}$ by \eqref{eq14}. Also, because of the part $(2)$ of the Remark \ref{r1}, we can ignore $s$ in the last part of the above chain of equations by the same arguments as in Lemma \ref{l10}. Taking the limit of $L_{n}(m)$ as $m \to \infty$ and using \eqref{eq13a} yields
\begin{equation}\label{eqap}
L_n(s) = \lim_{m \to \infty}{L_n(s,m)}=\frac{\prod_{k=0}^{n-1}M(t_{k}(s))u}{\rho^nu}(1+o_n(s)).
\end{equation}
 Then,
\begin{equation}\label{eq63imm}
{1}-s_{n,j}=\frac{t_j}{m^4_n}(1+t_jo_n).
\end{equation}
Representation \eqref{eq18} for matrices $M(t_{k}(s_n))$ give
\begin{equation}\label{eq63imm2}
M(t_{k}(s_n)) = M+q\left(t_{k}(s_n), \textbf{1}\right)(1+o(s_n)).
\end{equation}
Since $m_n^4$ is a subset of $m_n^3$, from the proof of Theorem \ref{t5}, we deduce that $v'(1-s_n)/v'(s_n-t_k(s_n))=o(s_n), k \in \mathbb{N}$. Hence $\textbf{1}-t_{k}(s_n)=\textbf{1}-s_n+s_n-t_{k}(s_n)=v'(s_n-t_k(s_n))u(\textbf{1}+o_n(s_n))$ by \eqref{eq66im}.
Multiplying equation \eqref{eq63imm2} by $v'$ on the left-hand side and by $u$ on the right-hand side, and using Lemma \ref{l5}, we obtain
\begin{equation}\label{eqap1}
\rho_{t_{k}(s_n)}=\rho-2Qv'(s_n-t_{k}(s_n))(1+o_k(s_n)).
\end{equation}
According to Lemma \ref{l3}, $M^n(t_{k}(s_n))/\rho^n_{t_{k}(s_n)}= u_{t_{k}(s_n)}v'_{t_{k}(s_n)}(1+o_n)$ as $\sigma \to 0$. Since $t_{k}(s_n)=1+o(s_n), k \in \mathbb{N}$ by \eqref{eqin000}, \eqref{eq19}, and  \eqref{eqin1252}, $u_{t_{k}(s_n)}v'_{t_{k}(s_n)}=uv'(1+o(s_n))$. Then, it must be the case, that
\begin{equation}\label{eqap2}
\prod_{k=0}^{n-1}\frac{M(t_{k}(s_n))}{\rho_{t_{k}(s_n)}}=uv'(1+o_n(s_n)).
\end{equation}
Combining \eqref{eqap}, \eqref{eqap1}, \eqref{eqap2}, and considering that $(uv')u=u$, we derive
\begin{equation}\label{eqap3}
L_n(s_n) = \prod_{k=0}^{n-1}(1-2Qv'(s_n-t_{k}(s_n))(1+o_k(s_n)))(\textbf{1}+o_n(s_n)).
\end{equation}
Applying the Taylor expansion for the logarithm gives
\begin{gather}
\nonumber \ln{\prod_{k=0}^{n-1}(1-2Qv'(s_n-t_{k}(s_n))(1+o_k(s_n)))}=\sum_{k=0}^{n-1}\ln\left(1-2Qv'(s_n-t_{k}(s_n))(1+o_k(s_n))\right)\\
 =-2Q\sum_{k=0}^{n-1}v'(s_n-t_{k}(s_n))(1+o_k(s_n))+\sum_{k=0}^{n-1}R_{n,k},\label{eqap4}
\end{gather}
where the second sum tends to zero if the first is bounded.

The first sum in \eqref{eqap4} is analogous to the first sum in \eqref{eq68im} but with a different multiplier ($2Q$ instead of $\lambda'u$, which is also bounded by Lemma \ref{l2} $(iv)$ and $(v)$). The result follows from \eqref{eqap3}, \eqref{eqap4}, and the respective results in Theorem \ref{t5}.

In the case where $\rho>1$, consider the probability generating function $f^*(s)=f(\mu s)/\mu$ (see \cite{k9}), which by definition is the offspring probability generating function of a subcritical branching process with the Perron root $\rho_{\mu}$. Let $P^*(x,s)$ be the joint probability generating function of the population size and the total progeny of the $n$-th generation of this subcritical branching process. After dividing the numerator and the denominator of \eqref{eqas1o} by $\mu_j$, we obtain the formula
\begin{equation}\label{eqap5}
L^*_{n,j}(s,m)=\frac{P^*_{n,j}(\textbf{1},s)-P^*_{n,j}(f^*_m(\textbf{0}),s)}{{1}-f^*_{m+n,j}(\textbf{0})}=\frac{t^*_{n,j}(s)-P^*_{n,j}(f^*_m(\textbf{0}),s)}{{1}-f^*_{m+n,j}(\textbf{0})}.
\end{equation}
In view of \eqref{eq20}, we have  $\lim_{\sigma \to 0}\lvert n(1-\rho) \rvert = \lim_{\sigma \to 0}n(1-\rho_{\mu})=-\ln{r}$. Therefore, from \eqref{eqap5}, it is evident that the processes $L^*_{n,j}(m)$ will have the same asymptotics as the processes $L_{n,j}(m)$ in the subcritical case, with the same limits $\lim_{\sigma \to 0} n(1-\rho) = -\ln{r}$.

The theorem is proved.
\end{proof}

{\footnotesize

\end{document}